\documentclass[11pt,leqno]{article}
\usepackage{hyperref}
\usepackage{soul}
\usepackage[doc]{optional}
\usepackage{xcolor}
\definecolor{labelkey}{rgb}{0,0.08,0.45}
\definecolor{rekey}{rgb}{0,0.6,0.0}
\definecolor{Brown}{rgb}{0.45,0.0,0.05}
\usepackage{exscale,relsize}
\usepackage{amsmath}
\usepackage{amsfonts}
\usepackage{amssymb}
\usepackage{calc}
\usepackage{theorem}
\usepackage{pifont}      %needed by dingautolist
\usepackage{graphicx}
\oddsidemargin -0.1cm
\textwidth  16.5cm
\topmargin  0.0cm
\headheight 0.0cm
\textheight 21.0cm
\parindent  4mm
\parskip    10pt
\tolerance  3000
\DeclareMathOperator{\weakstarly}{\rightharpoondown_{\mathrm{w*}}}

\DeclareMathOperator{\weakstarlynorm}{\stackrel{\mathrm{w{**}}\times\|\cdot\|}{\rightharpoondown}}

\newcommand{\wk}{\ensuremath{\operatorname{w*}}}

\newcommand{\scal}[2]{\langle{{#1},{#2}}\rangle}

\newcommand{\RR}{\ensuremath{\mathbb R}}

\newcommand{\RX}{\ensuremath{\,\left]-\infty,+\infty\right]}}
\newcommand{\RXX}{\ensuremath{\,\left[-\infty,+\infty\right]}}

\newcommand{\NN}{\ensuremath{\mathbb N}}

\newcommand{\thalb}{\ensuremath{\tfrac{1}{2}}}

\newcommand{\menge}[2]{\big\{{#1} \mid {#2}\big\}}

\newcommand{\To}{\ensuremath{\rightrightarrows}}

\newcommand{\spand}{\operatorname{span}}

\newcommand{\pos}{\operatorname{pos}}
\newcommand{\J}{\ensuremath{\mathbf{J}}}

\newcommand{\dom}{\ensuremath{\operatorname{dom}}}

\newcommand{\gra}{\ensuremath{\operatorname{gra}}}
\newcommand{\epi}{\ensuremath{\operatorname{epi}}}

\newcommand{\inte}{\ensuremath{\operatorname{int}}}

\newcommand{\bd}{\ensuremath{\operatorname{bdry}}}
\newcommand{\ran}{\ensuremath{\operatorname{ran}}}

\newcommand{\conv}{\ensuremath{\operatorname{conv}}}

\newcommand{\Id}{\ensuremath{\operatorname{Id}}}

\newcommand{\pinf}{\ensuremath{+\infty}}

\renewcommand{\phi}{\ensuremath{\varphi}}

\newcommand{\eph}{\operatorname{epi}}

\newtheorem{theorem}{Theorem}[section]
\newtheorem{lemma}[theorem]{Lemma}
\newtheorem{fact}[theorem]{Fact}
\newtheorem{corollary}[theorem]{Corollary}
\newtheorem{proposition}[theorem]{Proposition}
\newtheorem{definition}[theorem]{Definition}

\newtheorem{openprob}[theorem]{Open Problem}
\theoremstyle{plain}{\theorembodyfont{\rmfamily}
}
\theoremstyle{plain}{\theorembodyfont{\rmfamily}
}
\theoremstyle{plain}{\theorembodyfont{\rmfamily}
}
\theoremstyle{plain}{\theorembodyfont{\rmfamily}
\newtheorem{example}[theorem]{Example}}
\theoremstyle{plain}{\theorembodyfont{\rmfamily}
\newtheorem{remark}[theorem]{Remark}}
\newtheorem{problem}[theorem]{Open Problem}
\theoremstyle{plain}{\theorembodyfont{\rmfamily}
}

%\def\endproof{\vbox{\hrule height0.6pt\hbox{\vrule height1.3ex%
%width0.6pt\hskip0.8ex\vrule width0.6pt}\hrule height0.6pt}}
%\numberwithin{equation}{section}

\newcommand{\qede}{\hspace*{\fill}$\Diamond$\medskip}
%-------------------------------------------------------------------------
\begin{document}

%\sffamily

\title{\sffamily{Recent progress on  Monotone Operator Theory}}

\author{
Jonathan M. Borwein\thanks{CARMA, University of Newcastle,
 Newcastle, New South Wales 2308, Australia. E-mail:
\texttt{jonathan.borwein@newcastle.edu.au}. Laureate Professor at the University of Newcastle and Distinguished Professor at  King
Abdul-Aziz University, Jeddah.}\;
  and Liangjin\
Yao\thanks{CARMA, University of Newcastle,
 Newcastle, New South Wales 2308, Australia.
E-mail:  \texttt{liangjin.yao@newcastle.edu.au}.}}

\date{May 21, 2013}
\maketitle

\begin{abstract} \noindent
In this paper, we  survey recent progress on the theory of maximally monotone operators  in general Banach space.
 We also extend several results and leave some open questions.
\end{abstract}

\noindent {\bfseries 2010 Mathematics Subject Classification:}\\
{Primary  47H05;
Secondary 46B10, 47A06, 47B65, 47N10, 90C25}

\noindent {\bfseries Keywords:} Adjoint, BC--function,
Brezis-Browder theorem, Fenchel conjugate, Fitzpatrick function,
linear relation, local boundedness, maximally monotone operator,
monotone operator, normal cone operator, norm-weak$^{*}$  graph
closedness,
 operator of
type (BR),
 operator of
type (D),
operator of
type (DV),
 operator of
type (FP),
operator of
type (FPV),
operator of type (NI),
partial inf-convolution,
property (Q),
set-valued operator,
space of type (D),
space of type (DV),
subdifferential operator.

\section{Introduction}

We assume throughout that
$X$ is a real Banach space with norm $\|\cdot\|$,
that $X^*$ is the continuous dual of $X$,
 and
that $X$ and $X^*$ are paired by $\scal{\cdot}{\cdot}$.
The \emph{open unit ball} and \emph{closed unit ball} in $X$ is denoted respectively by $U_X:=
\menge{x\in X}{\|x\|<1}$ and  $B_X:=
\menge{x\in X}{\|x\|\leq1}$, and $\NN:=\{1,2,3,\ldots\}$.

We recall the following basic
fact regarding the second dual ball:

\begin{fact}[Goldstine]\emph{(See \cite[Theorem~2.6.26]{Megg} or \cite[Theorem~3.27]{FabianHH}.)}
  \label{Goldst:1}
 The weak*-closure of $B_X$ in $X^{**}$ is $B_{X^{**}}$.
\end{fact}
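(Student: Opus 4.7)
The plan is to establish the two inclusions of the claimed equality separately. The inclusion $\overline{B_X}^{\,w^*}\subseteq B_{X^{**}}$ is immediate: under the canonical embedding $B_X$ maps into $B_{X^{**}}$, and $B_{X^{**}}$ is weak*-closed because the $X^{**}$-norm is the supremum of the absolute values of the weak*-continuous functionals $x^{**}\mapsto\scal{x^{**}}{x^*}$ for $x^*\in B_{X^*}$, hence is weak*-lower semicontinuous (alternatively, this is the Banach--Alaoglu theorem applied to $X^*$).

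For the reverse inclusion I would argue by contradiction via geometric Hahn--Banach separation in the locally convex space $(X^{**},w^*)$. Set $K:=\overline{B_X}^{\,w^*}$, a weak*-closed, convex, balanced subset of $X^{**}$, and suppose that some $x_0^{**}\in B_{X^{**}}$ lies outside $K$. Then $\{x_0^{**}\}$ is weak*-compact and $K$ is weak*-closed and convex, so the Hahn--Banach separation theorem produces a weak*-continuous linear functional on $X^{**}$ together with a constant $\alpha\in\RR$ strictly separating them.

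Here I invoke the key structural fact that the topological dual of $(X^{**},w^*)$ is exactly $X^*$; equivalently, every weak*-continuous linear functional on $X^{**}$ has the form $x^{**}\mapsto\scal{x^{**}}{x^*}$ for a unique $x^*\in X^*$. The separation then reads $\scal{x}{x^*}\leq\alpha<\scal{x_0^{**}}{x^*}$ for every $x\in B_X$. Taking the supremum on the left gives $\|x^*\|\leq\alpha$, while the right-hand side is bounded by $\|x_0^{**}\|\cdot\|x^*\|\leq\|x^*\|$, yielding the contradiction $\|x^*\|<\|x^*\|$. Hence $B_{X^{**}}\subseteq K$, as required.

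The main obstacle in this plan is the identification of the dual of the weak* topology on $X^{**}$: once one accepts that separating functionals can be realised as elements of $X^*$ rather than general elements of $X^{***}$, the rest is a direct polar-duality computation. Everything else reduces to standard facts already implicit in the statement of Banach--Alaoglu together with the geometric form of Hahn--Banach.
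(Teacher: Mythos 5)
Your argument is correct and is the standard proof of Goldstine's theorem: the paper states this only as a cited Fact (referring to Megginson and to Fabian et al.) and gives no proof of its own, and the separation argument you give --- weak*-closedness of $B_{X^{**}}$ via Banach--Alaoglu for one inclusion, and Hahn--Banach separation in $(X^{**},\mathrm{w}^*)$ together with the identification of the dual of that topology with $X^*$ for the other --- is exactly the textbook route those references take. No gaps: the polar computation $\sup_{x\in B_X}\scal{x}{x^*}=\|x^*\|$ combined with $\scal{x_0^{**}}{x^*}\leq\|x_0^{**}\|\,\|x^*\|\leq\|x^*\|$ does yield the required contradiction.
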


We say  a net $(a_{\alpha})_{\alpha\in\Gamma}$ in X is \emph{eventually bounded}
 if there exist $\alpha_0\in\Gamma$ and $M\geq0$ such that
\begin{align*}
\|a_{\alpha}\|\leq M,\quad \forall \alpha\succeq_\Gamma\alpha_0.
\end{align*}
We denote by $\longrightarrow$
and $\weakstarly$
the norm convergence and weak$^*$ convergence
of  nets,  respectively.

\subsection{Monotone operators}

Let $A\colon X\To X^*$
be a \emph{set-valued operator} (also known as a relation, point-to-set mapping or multifunction)
from $X$ to $X^*$, i.e., for every $x\in X$, $Ax\subseteq X^*$,
and let
$\gra A := \menge{(x,x^*)\in X\times X^*}{x^*\in Ax}$ be
the \emph{graph} of $A$. The \emph{domain} of $A$
  is $\dom A:= \menge{x\in X}{Ax\neq\varnothing}$ and
$\ran A:=A(X)$ is the \emph{range} of $A$.

Recall that $A$ is
\emph{monotone} if
\begin{equation}
\scal{x-y}{x^*-y^*}\geq 0,\quad \forall (x,x^*)\in \gra A\;
\forall (y,y^*)\in\gra A,
\end{equation}
and \emph{maximally monotone} if $A$ is monotone and $A$ has
 no proper monotone extension
(in the sense of graph inclusion).
Let $A:X\rightrightarrows X^*$ be monotone and $(x,x^*)\in X\times X^*$.
 We say $(x,x^*)$ is \emph{monotonically related to}
$\gra A$ if
\begin{align*}
\langle x-y,x^*-y^*\rangle\geq0,\quad \forall (y,y^*)\in\gra
A.\end{align*}
Monotone operators have frequently shown themselves to be a key class of objects in both
modern Optimization and Analysis; see, e.g., \cite{Bor1,Bor2,Bor3},
the books \cite{BC2011,
BorVan,BurIus,ph,Si,Si2,Rock70CA,RockWets,Zalinescu,Zeidler2A, Zeidler2B}
and the references given therein.

We now introduce the four fundamental properties of maximally
monotone operators that our paper focusses on.

 \begin{definition}\label{def1}
 Let $A:X\To X^*$ be maximally monotone.
 Then four key properties of monotone operators are defined as follows.
 \begin{enumerate}
 \item $A$ is
\emph{of dense type or type (D)} (1971, \cite{Gossez3}, \cite{ph2} and \cite[Theorem~9.5]{Si5}) if for every
$(x^{**},x^*)\in X^{**}\times X^*$ with
\begin{align*}
\inf_{(a,a^*)\in\gra A}\langle a-x^{**}, a^*-x^*\rangle\geq 0,
\end{align*}
there exist a  bounded net
$(a_{\alpha}, a^*_{\alpha})_{\alpha\in\Gamma}$ in $\gra A$
such that
$(a_{\alpha}, a^*_{\alpha})_{\alpha\in\Gamma}$
weak*$\times$strong converges to
$(x^{**},x^*)$.
\item $A$ is
\emph{of type negative infimum (NI)} (1996, \cite{SiNI}) if
\begin{align*}
\inf_{(a,a^*)\in\gra A}\langle a-x^{**},a^*-x^*\rangle\leq0,
\quad \forall(x^{**},x^*)\in X^{**}\times X^*.
\end{align*}

\item
$A$ is \emph{of type Fitzpatrick-Phelps (FP)} (1992, \cite{FP}) if
whenever $U$ is an open convex subset of $X^*$ such that $U\cap \ran
A\neq\varnothing$, $x^*\in U$, and $(x,x^*)\in X\times X^*$ is
monotonically related to $\gra A\cap (X\times U)$ it must follow
that $(x,x^*)\in\gra A$.

\item $A$ is
\emph{of  ``Br{\o}nsted-Rockafellar" (BR) type  (1999, \cite{Si6})
if whenever $(x,x^*)\in X\times X^*$, $\alpha,\beta>0$ and
\begin{align*}\inf_{(a,a^*)\in\gra A} \langle x-a,x^*-a^*\rangle
>-\alpha\beta\end{align*} then there exists $(b,b^*)\in\gra A$ such
that $\|x-b\|<\alpha,\|x^*-b^*\|<\beta$.}
\end{enumerate}
\end{definition}
As is now known (see Corollary~\ref{cor:main} and \cite{Si4, SiNI, MarSva}),
the first three properties coincide.
This coincidence is central to many of our proofs. Fact~\ref{MAS:BR1}
 also shows us that every maximally monotone operator of type (D) is of
type (BR).
(The converse fails, see Example~\ref{FPEX:1}\ref{BCCE:A7}.)
Moreover, in reflexive space every maximally monotone operator is
of type (D), as is the subdifferential operator of every proper closed convex function
on a Banach space.

While monotone operator theory is rather complete
in reflexive space
--- and for type (D) operators in general space --- the general
situation is less clear \cite{BorVan,Bor3}. Hence our continuing
interest in operators which are not of type (D). Not  every maximally monotone operator is
of type (BR) (see Example~\ref{FPEX:1}\ref{BCCE:Au2}).

We  say  a Banach space $X$ is \emph{of type (D)} \cite{Bor3}
if every maximally monotone operator  on $X$ is of  type (D). At
present the only known type (D) spaces are the reflexive spaces; and
our work here suggests that there are no non-reflexive type (D)
spaces.  In \cite[Exercise 9.6.3, page~450]{BorVan} such spaces were called
(NI) spaces and some potential non-reflexive examples were
conjectured; all of which are ruled out by our more recent work. In
\cite[Theorem 9.7.9, page~458]{BorVan} a variety of the pleasant properties of
type (D) spaces was listed.  In Section~\ref{s:DFPV} we  briefly study a new  dual class of (DV) spaces.

\subsection{Convex analysis}

As much as  possible we adopt standard convex analysis notation.
Given a subset $C$ of $X$,
$\inte C$ is the \emph{interior} of $C$ and
$\overline{C}$ is   the
\emph{norm closure} of $C$.
 For the set $D\subseteq X^*$, $\overline{D}^{\wk}$
is the weak$^{*}$ closure of $D$, and  the norm $\times$ weak$^*$ closure of $C\times D$ is
$\overline{C\times D}^{\|\cdot\|\times\wk}$.
The \emph{indicator function} of $C$, written as $\iota_C$, is defined
at $x\in X$ by
\begin{align}
\iota_C (x):=\begin{cases}0,\,&\text{if $x\in C$;}\\
+\infty,\,&\text{otherwise}.\end{cases}\end{align}
The \emph{support function} of $C$, written as $\sigma_C$,
 is defined by $\sigma_C(x^*):=\sup_{c\in C}\langle c,x^*\rangle$.
 For every $x\in
X$, the \emph{normal cone} operator of $C$ at $x$ is defined by
$N_C(x):= \menge{x^*\in X^*}{\sup_{c\in C}\scal{c-x}{x^*}\leq 0}$, if
$x\in C$; and $N_C(x):=\varnothing$, if $x\notin C$;
 the \emph{tangent cone} operator of $C$ at $x$ is defined by
$T_C(x):= \menge{x\in X}{\sup_{x^*\in N_C(x)}\langle x, x^*\rangle\leq 0}$, if
$x\in C$; and $T_C(x):=\varnothing$, if $x\notin C$. The \emph{hypertangent cone} of $C$ at $x$, $H_C(x)$,
coincides with the interior of $T_C(x)$ (see \cite{BorStr,BorStr1}).

Let $f\colon X\to \RX$. Then
$\dom f:= f^{-1}(\RR)$ is the \emph{domain} of $f$, and
$f^*\colon X^*\to\RXX\colon x^*\mapsto
\sup_{x\in X}(\scal{x}{x^*}-f(x))$ is
the \emph{Fenchel conjugate} of $f$.
The \emph{epigraph} of $f$ is $\epi f := \menge{(x,r)\in
X\times\RR}{f(x)\leq r}$.
Let the net $(y_{\alpha}, y_{\alpha}^*)_{\alpha\in I}$ be in $X\times X^*$ and $(x^{**}, x^*)\in X^{**}\times X^*$.
We write $(y_{\alpha}, y_{\alpha}^*)\weakstarlynorm(x^{**}, x^*)$
 when $(y_{\alpha}, y_{\alpha}^*)$ converges to $(x^{**}, x^*)$ in the weak$^{*}$-topology $\omega (X^{**}, X^*)\times\|\cdot\|$.
We say $f$ is proper if $\dom f\neq\varnothing$.
Let $f$ be proper. The \emph{subdifferential} of
$f$ is defined by
   $$\partial f\colon X\To X^*\colon
   x\mapsto \{x^*\in X^*\mid(\forall y\in
X)\; \scal{y-x}{x^*} + f(x)\leq f(y)\}.$$
We  denote  by $J$ \emph{the duality map}, i.e.,
the subdifferential of the function $\tfrac{1}{2}\|\cdot\|^2$
mapping $X$ to $X^*$.
Let $g\colon X\rightarrow\RX$.
Then the \emph{inf-convolution} $f\Box g$
is the function defined on $X$ by
\begin{equation*}f\Box g\colon
x \mapsto \inf_{y\in X}
\big(f(y)+g(x-y)\big).
\end{equation*}
Let $Y$ be another real Banach space and  $F_1, F_2\colon X\times Y\rightarrow\RX$.
Then the \emph{partial inf-convolution} $F_1\Box_1 F_2$
is the function defined on $X\times Y$ by
\begin{equation*}F_1\Box_1 F_2\colon
(x,y)\mapsto \inf_{u\in X}
\big(F_1(u,y)+F_2(x-u,y)\big).
\end{equation*}
Then $F_1\Box_2 F_2$
is the function defined on $X\times Y$ by
\begin{equation*}F_1\Box_2 F_2\colon
(x,y)\mapsto \inf_{v\in Y}
\big(F_1(x,y-v)+F_2(x,v)\big).
\end{equation*}

%We shall also need a version of the \emph{Fenchel duality theorem}:
%
% \begin{fact}[Rockafellar] \label{f:F4}
%\emph{(See {\cite[Theorem~3(a)]{Rock66}},
%{\cite[Corollary~10.3]{Si2}}, or
%{\cite[Theorem~2.8.7(iii)]{Zalinescu}}.)}
%Let $f,g: X\rightarrow\RX$ be proper convex functions.
%Assume that there exists a point $x_0\in\dom f \cap \dom g$
%such that $g$ is continuous at $x_0$.
%Then for every $z^*\in X^*$,
%there exists $y^*\in X^*$ such that
%\begin{equation}
%(f+g)^*(z^*) = f^*(y^*)+g^*(z^*-y^*).
%\end{equation}
%\end{fact}

\subsection{Structure of the paper}

The remainder of this paper is organized as follows.
In Section~\ref{s:TypeD},  we construct maximally monotone
 operators that are not of Gossez's dense-type (D) in many nonreflexive
 spaces, and present many related examples such as operators not of type (BR).

 In Section~\ref{s:Thrmon}, we show that monotonicity of dense type (type (D)), negative infimum type and Fitzpatrick-
Phelps type all coincide. We reprise  two recent proofs---by  Marques Alves/Svaiter  and Simons---showing the important result that every maximally monotone operator of negative infimum type defined on a real Banach space is actually of dense type.

%Some properties of  the dual type (FPV) operators are presented in
%Section~\ref{s:DFPV}. We introduce a new dual properties (DV). We
%show that  type (DV) space only can be a reflexive space.

In Section~\ref{s:main},
we consider the structure of maximally monotone operators in Banach space whose domains
have nonempty interior and we present new and explicit structure formulas for such operators.
 In Section~\ref{s:linear}, we list some important recent characterizations of monotone linear relations, such as a complete generalization of the Brezis-Browder theorem in general Banach space.
 Finally,  in Section~\ref{s:openp},  we mention some  central open problems in Monotone Operator Theory.

 \section{Type (D) space}\label{s:TypeD}
 In this section,  we construct maximally monotone
 operators that are not of Gossez's dense-type (D) in nearly all nonreflexive
 spaces. Many of these operators also fail to possess the
 Br{\o}nsted-Rockafellar (BR) property.
  Using  these operators,
we show that the partial inf-convolution of two BC--functions will
not always be a BC--function. This provides a negative answer to a
challenging question posed by Stephen Simons. Among other consequences, we
deduce --- in a uniform fashion --- that every Banach space which contains an isomorphic copy of
the James space $\J$ or its dual $\J^*$,  or  $c_0$ or
its dual  $\ell^1$, admits a non type (D) operator.
The existence of non type (D) operators in spaces containing $\ell^1$ or $c_0$
has been proved recently by Bueno and Svaiter \cite{BuSv}.

 This section is based on the work in \cite{BBWY3} by Bauschke, Borwein, Wang and Yao.

 Let $A:X\rightrightarrows X^*$ be linear relation.
  We say that $A$ is
\emph{skew} if $\gra A \subseteq \gra (-A^*)$;
equivalently, if $\langle x,x^*\rangle=0,\; \forall (x,x^*)\in\gra A$.
Furthermore,
$A$ is \emph{symmetric} if $\gra A
\subseteq\gra A^*$; equivalently, if $\scal{x}{y^*}=\scal{y}{x^*}$,
$\forall (x,x^*),(y,y^*)\in\gra A$.
We define the \emph{symmetric part} and the \emph{skew part} of $A$ via
\begin{equation}
\label{Fee:1}
P := \thalb A + \thalb A^* \quad\text{and}\quad
S:= \thalb A - \thalb A^*,
\end{equation}
respectively. It is easy to check that $P$ is symmetric and that $S$
is skew. Let $S$ be a
subspace of $X$.
  We say $A$ is \emph{$S$--saturated} \cite{Si2} if
\begin{align*}
Ax+S^{\bot}=Ax,\quad
\forall x\in\dom A.
\end{align*}
We say a maximally monotone operator $A:X\rightrightarrows X^*$ is
\emph{unique} if all maximally monotone extensions of
$A$ (in the sense of graph inclusion) in $X^{**}\times X^*$ coincide.
Let $Y$ be another real Banach space. We set  $P_X: X\times Y\rightarrow
X\colon (x,y)\mapsto x$,
 and
 $P_Y: X\times Y\rightarrow Y\colon (x,y)\mapsto y$.
Let $L:X\rightarrow Y$ be linear. We say $L$ is a (linear)
\emph{isomorphism} into $Y$ if $L$ is one to one, continuous and
$L^{-1}$ is continuous on $\ran L$. We say $L$ is an \emph{isometry}
if $\|Lx\|=\|x\|,  \forall x\in X$. The spaces $X$, $Y$ are then
\emph{isometric} (\emph{isomorphic}) if there exists an isometry
(\emph{isomorphism})  from $X$ onto $Y$.

 Now let $F:X\times X^*\rightarrow\RX$.
 We say $F$ is a \emph{BC--function} (BC stands for
``Bigger conjugate'') \cite{Si2} if $F$ is proper and
convex with
\begin{align} F^*(x^*,x)
 \geq F(x,x^*)\geq\langle x,x^*\rangle\quad\forall(x,x^*)\in X\times X^*.
 \end{align}

\subsection{Operators of type (BR)}

We first describe some properties of type (BR) operators.
Let $A:X\To X^*$ be a maximally monotone operator. We say $A$ is
\emph{isomorphically of type (BR)}, or \emph{(BRI)} if, $A$ is of  type (BR) in every equivalent norm on $X$.
Let us emphasize that we do not know if there exists a maximally monotone operator of type (BR) that is  not isomorphically of type (BR). Note that all the other properties studied in this paper are preserved by Banach space isomorphism.

To produce operators not of type (D) but that are of type (BR) we
exploit:

\begin{lemma}\emph{(See \cite[Lemma~3.2]{BBWY3}.)}\label{BRSk:1}
Let $A:X\rightrightarrows X^*$ be a maximally monotone and  linear skew operator.
Assume that $\gra(-A^*)\cap X\times X^*\subseteq\gra A$.
Then $A$ is isomorphically of type (BR).
\end{lemma}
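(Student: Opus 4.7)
The plan is to verify the definition of type (BR) directly, in any equivalent norm. First observe that linearity, skewness, and the containment $\gra(-A^*)\cap(X\times X^*)\subseteq \gra A$ are purely algebraic/topological conditions that do not depend on the particular choice of equivalent norm: equivalent norms on $X$ produce the same topology, the same continuous dual $X^*$, and hence the same adjoint relation $A^*$. Consequently, once I verify the (BR) property using only these structural hypotheses (and no quantitative norm estimate), the same argument applies verbatim under any equivalent norm, yielding that $A$ is isomorphically of type (BR).

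So fix $(x,x^*)\in X\times X^*$ and $\alpha,\beta>0$ with
\[
\inf_{(a,a^*)\in \gra A}\,\langle x-a,x^*-a^*\rangle>-\alpha\beta.
\]
Since $A$ is skew, $\langle a,a^*\rangle=0$ for every $(a,a^*)\in \gra A$, so expanding gives
\[
\langle x-a,x^*-a^*\rangle=\langle x,x^*\rangle-\bigl(\langle x,a^*\rangle+\langle a,x^*\rangle\bigr).
\]
The crucial step now exploits linearity: because $\gra A$ is a linear subspace, $\lambda(a,a^*)\in \gra A$ for every $\lambda\in\RR$. Replacing $(a,a^*)$ by $\lambda(a,a^*)$ turns the right-hand side into $\langle x,x^*\rangle-\lambda\bigl(\langle x,a^*\rangle+\langle a,x^*\rangle\bigr)$, and the infimum of this affine function of $\lambda\in\RR$ is $-\infty$ unless the bracketed coefficient vanishes. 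Since $-\alpha\beta$ is finite, this forces $\langle x,a^*\rangle+\langle a,x^*\rangle=0$ for every $(a,a^*)\in \gra A$, which is precisely the statement $(x,x^*)\in \gra(-A^*)\cap(X\times X^*)$.

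By the standing hypothesis this containment sits inside $\gra A$, so $(x,x^*)\in \gra A$. Taking $(b,b^*):=(x,x^*)$ trivially yields $\|x-b\|=0<\alpha$ and $\|x^*-b^*\|=0<\beta$, establishing (BR). The norms appear only in this trivially-satisfied conclusion, so the whole argument is insensitive to the choice of equivalent norm, as anticipated. The one step requiring a moment's care---and arguably the only real obstacle---is the scaling argument that forces $\langle x,a^*\rangle+\langle a,x^*\rangle=0$; once that vanishing is in hand, the hypothesis $\gra(-A^*)\cap(X\times X^*)\subseteq \gra A$ does the rest of the work by placing the candidate point inside $\gra A$ for free.
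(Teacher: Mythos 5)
Your proof is correct, and it is essentially the argument behind the cited result (the paper itself states this lemma without proof, referring to \cite[Lemma~3.2]{BBWY3}): the scaling over $\lambda\in\RR$ is exactly the observation that the supremum $\sup_{(a,a^*)\in\gra A}\bigl(\langle x,a^*\rangle+\langle a,x^*\rangle\bigr)=F_A(x,x^*)$ of a linear functional over the subspace $\gra A$ is either $0$ or $+\infty$, which forces $(x,x^*)\in\gra(-A^*)\cap(X\times X^*)\subseteq\gra A$ whenever the infimum is finite. The norm-independence remark correctly upgrades the conclusion to isomorphically (BR).
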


Lemma~\ref{BRSk:1} shows that every continuous monotone linear and  skew operator is of type (BR).

\begin{corollary}\emph{(See \cite[Corollary~3.3]{BBWY3}.)}
Let $A:X\rightrightarrows X^*$ be a maximally monotone and  linear
skew operator that  is not of type (D). Assume that $A$ is unique.
Then $\gra A=\gra(-A^*)\cap X\times X^*$  and so $A$ is isomorphically of type
(BR).
\end{corollary}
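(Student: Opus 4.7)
The plan is to establish the set-theoretic identity $\gra A=\gra(-A^*)\cap(X\times X^*)$ and then let Lemma~\ref{BRSk:1} immediately deliver the isomorphic (BR) conclusion. The inclusion $\gra A\subseteq\gra(-A^*)\cap(X\times X^*)$ is automatic: $\gra A\subseteq X\times X^*$ by definition, and $\gra A\subseteq\gra(-A^*)$ is exactly the content of skewness. For the reverse inclusion I would fix $(x_0,x_0^*)\in\gra(-A^*)\cap(X\times X^*)$, combine the orthogonality $\langle x_0,a^*\rangle+\langle a,x_0^*\rangle=0$ (defining $\gra(-A^*)$) with the skewness relation $\langle a,a^*\rangle=0$, and expand to obtain the clean identity
\[
\langle x_0-a,x_0^*-a^*\rangle=\langle x_0,x_0^*\rangle,\qquad\forall(a,a^*)\in\gra A.
\]
When $\langle x_0,x_0^*\rangle\geq 0$, this makes $(x_0,x_0^*)$ monotonically related to $\gra A$, and the maximality of $A$ in $X\times X^*$ forces $(x_0,x_0^*)\in\gra A$, as desired. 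The entire burden therefore falls on ruling out $\langle x_0,x_0^*\rangle<0$ via the uniqueness hypothesis.

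My strategy in the substantive case is to produce two distinct maximal monotone extensions of $\gra A$ in $X^{**}\times X^*$, contradicting uniqueness. A useful preliminary (obtained by inserting the scalar translates $\lambda(a,a^*)\in\gra A$, $\lambda\in\RR$, into the monotonicity inequality against $(y^{**},y^*)\in\widetilde A$ and reading off the polynomial-in-$\lambda$ coefficients) is that every maximal monotone extension $\widetilde A\subseteq X^{**}\times X^*$ of $\gra A$ automatically satisfies $\widetilde A\subseteq\gra(-A^*)$ and $\langle y^{**},y^*\rangle\geq 0$ throughout $\widetilde A$. Exploiting the nontrivial affine fiber $(-A^*)^{-1}(x_0^*)=x_0+(\ran A)^\perp$ in $X^{**}$, I would select two distinct lifts $(x_0+z_i^{**},x_0^*)$, $i=1,2$, with $z_i^{**}\in(\ran A)^\perp$ chosen so that $\langle x_0+z_i^{**},x_0^*\rangle\geq 0$---the strict negativity of $\langle x_0,x_0^*\rangle$ provides precisely the slack needed to make such lifts feasible---and then run two Zorn constructions starting from $\gra A\cup\{(x_0+z_1^{**},x_0^*)\}$ and $\gra A\cup\{(x_0+z_2^{**},x_0^*)\}$ respectively. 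Careful choice of $z_1^{**}$ and $z_2^{**}$ (to keep the two Zornifications from silently merging) yields two distinct maximal monotone extensions, contradicting uniqueness.

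With $\gra A=\gra(-A^*)\cap(X\times X^*)$ in hand, Lemma~\ref{BRSk:1} immediately gives that $A$ is isomorphically of type (BR). The main technical obstacle is the distinctness of the two Zorn extensions in the substantive case: one must carefully coordinate the $(\ran A)^\perp$-freedom with the self-pairing sign constraint to ensure the two extensions do not secretly collapse to a single maximal one, which may in turn require a Hahn--Banach-type separation argument on $X^{**}$ (delicate when $x_0^*$ sits in the norm-closure of $\ran A$, where the natural annihilator witnesses degenerate).
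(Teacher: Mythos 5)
The survey does not reprove this corollary---it simply cites \cite[Corollary~3.3]{BBWY3}---so your argument has to be judged on its own terms. Your reduction is sound: skewness gives $\gra A\subseteq\gra(-A^*)\cap(X\times X^*)$, the identity $\langle x_0-a,x_0^*-a^*\rangle=\langle x_0,x_0^*\rangle$ is correct (and, combined with scaling along the subspace $\gra A$, shows that $(y^{**},y^*)\in X^{**}\times X^*$ is monotonically related to $\gra A$ precisely when $(y^{**},y^*)\in\gra(-A^*)$ and $\langle y^{**},y^*\rangle\geq0$), and maximality disposes of the case $\langle x_0,x_0^*\rangle\geq 0$. But the substantive case $\langle x_0,x_0^*\rangle<0$ has a genuine gap, and it cannot be repaired in the form you propose, because your argument never invokes the hypothesis that $A$ is \emph{not} of type (D) --- and the identity $\gra A=\gra(-A^*)\cap(X\times X^*)$ is false without it. Concretely, the skew operator $A$ of Fact~\ref{FE:1} on $\ell^2$ is maximally monotone, skew and unique (reflexivity), yet every $x\in\dom A^*\setminus\dom A$ yields $(x,-A^*x)\in\gra(-A^*)\cap(X\times X^*)$ with $\langle x,-A^*x\rangle=-\tfrac{1}{2}s^2<0$; your Case-2 machinery would manufacture two distinct maximal monotone extensions of this $A$, which do not exist. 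The two failure points are exactly the ones you flag but do not resolve: (a) a lift $x_0+z^{**}$ with $\langle x_0+z^{**},x_0^*\rangle\geq0$ needs $z^{**}\in(\ran A)^{\perp}$ with $\langle z^{**},x_0^*\rangle>0$, which is impossible whenever $x_0^*\in\overline{\ran A}$ since $((\ran A)^{\perp})_{\perp}=\overline{\ran A}$; and (b) even when two such lifts exist they share the second coordinate, so $\langle(x_0+z_1^{**})-(x_0+z_2^{**}),x_0^*-x_0^*\rangle=0$ and the two points are automatically mutually monotonically related---nothing prevents one maximal monotone extension from containing both, so uniqueness is never contradicted.

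The missing idea is to pair the hypothetical negative point against a \emph{positive} one supplied by the failure of type (D). Since $A$ is not of type (D), it is not of type (NI) by Corollary~\ref{cor:main}, and for a skew linear relation the same expansion shows this is equivalent to the existence of $(w^{**},w^*)\in\gra(-A^*)$ with $\langle w^{**},w^*\rangle>0$. Put $v_1:=(x_0+\lambda w^{**},\,x_0^*+\lambda w^*)$ and $v_2:=(\lambda w^{**},\lambda w^*)$. Both lie in the subspace $\gra(-A^*)$; the self-pairing of $v_2$ is $\lambda^2\langle w^{**},w^*\rangle>0$, and that of $v_1$ is a quadratic in $\lambda$ with positive leading coefficient, hence positive for $\lambda$ large. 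So both are monotonically related to $\gra A$, while $v_1-v_2=(x_0,x_0^*)$ has pairing $\langle x_0,x_0^*\rangle<0$, so no monotone set contains both. Zorn applied to $\gra A\cup\{v_1\}$ and to $\gra A\cup\{v_2\}$ then produces two genuinely different maximal monotone extensions, contradicting uniqueness. This closes the case $\langle x_0,x_0^*\rangle<0$, and Lemma~\ref{BRSk:1} finishes the proof as you say.
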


 \begin{fact}[Marques Alves and Svaiter]\emph{(See
\cite[Theorem~1.4(4)]{MarSva2} or \cite{MarSva3}.)} \label{MAS:BR1} Let $A:X \To X^*$
be a maximally  monotone operator that
is of type (NI) (or equivalently, by Theorem~\ref{retyD:3}, of type (D)).
Then $A$ is isomorphically of type (BR).
\end{fact}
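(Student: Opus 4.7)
The plan is to separate the claim into an easy renorming-invariance step and the core technical implication (NI)$\Rightarrow$(BR) that underlies the Marques Alves--Svaiter result.

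First I would observe that being of type (NI) is preserved under equivalent renormings of $X$. The defining inequality
\begin{align*}
\inf_{(a,a^*)\in\gra A}\langle a-x^{**},a^*-x^*\rangle\leq 0,\quad \forall (x^{**},x^*)\in X^{**}\times X^*,
\end{align*}
uses only the algebraic dual pair $(X,X^*,X^{**})$ and the canonical evaluation pairing. Replacing $\|\cdot\|$ by an equivalent norm leaves $X^*$ and $X^{**}$ unchanged as sets and preserves the pairing, so (NI) in one norm is (NI) in every equivalent norm; by the equivalence with type (D) promised by Theorem~\ref{retyD:3}, the same holds for type (D). Consequently, once one shows that every (NI) operator is of type (BR) in the ambient norm, the same argument applied in any equivalent renorming of $X$ delivers (BR) in that norm, which is exactly the definition of isomorphically of type (BR).

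Next I would encode the (BR) hypothesis through the Fitzpatrick function
\begin{align*}
F_A(y,y^*):=\sup_{(a,a^*)\in\gra A}\big[\langle y,a^*\rangle+\langle a,y^*\rangle-\langle a,a^*\rangle\big].
\end{align*}
A direct expansion gives $\inf_{(a,a^*)\in\gra A}\langle x-a, x^*-a^*\rangle=\langle x,x^*\rangle-F_A(x,x^*)$, so the (BR) hypothesis reads $F_A(x,x^*)<\langle x,x^*\rangle+\alpha\beta$. Since $A$ is maximally monotone, $F_A$ is proper, convex, and norm lower semicontinuous on $X\times X^*$, and $\gra A$ is exactly the zero-set of $F_A-\pscal$. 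The (BR) conclusion therefore amounts to producing $(b,b^*)$ with $F_A(b,b^*)=\langle b,b^*\rangle$, $\|b-x\|<\alpha$, and $\|b^*-x^*\|<\beta$.

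To produce such a point I would apply a Br\o nsted--Rockafellar / Ekeland type perturbation to $F_A$ on $X\times X^*$ endowed with the scaled product norm $\|(y,y^*)\|_{\alpha,\beta}:=\max\{\|y\|/\alpha,\|y^*\|/\beta\}$: the hypothesis says $(x,x^*)$ is an $\alpha\beta$-almost minimizer of $F_A-\pscal$, and the perturbation produces an exact subgradient of $F_A$ at a nearby point, controlled by $\alpha$ in the first factor and $\beta$ in the second. The main obstacle, and the only place where (NI) is genuinely used, is that $\partial F_A$ naturally lives in $X^*\times X^{**}$: without (NI) the point produced would only sit in a ``bidual enlargement'' of $A$, not in $\gra A$. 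The (NI) hypothesis, equivalently the weak$^*\times$strong density conclusion of type (D) from Theorem~\ref{retyD:3}, is exactly what forces the bidual coordinate of the exact subgradient to lie in the canonical image of $X$ in $X^{**}$, allowing the perturbed point to be realised in $\gra A\subseteq X\times X^*$. This last identification is the delicate step, and it is the crux of the Marques Alves--Svaiter argument.
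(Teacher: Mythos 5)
The paper states this result as a Fact imported from Marques Alves and Svaiter and gives no proof of its own, so there is no internal argument to compare against; your proposal is in effect an attempt to reconstruct their proof. Your two preliminary reductions are sound: type (NI) depends only on the dual pairing and is therefore invariant under equivalent renorming (the paper makes exactly this observation in the Remark following the Fact), and the identity $\inf_{(a,a^*)\in\gra A}\langle x-a,x^*-a^*\rangle=\langle x,x^*\rangle-F_A(x,x^*)$ together with maximality correctly converts (BR) into the statement that any point where $F_A-\langle\cdot,\cdot\rangle<\alpha\beta$ admits a point of $\{F_A=\langle\cdot,\cdot\rangle\}=\gra A$ within $\alpha$ in $X$ and $\beta$ in $X^*$.

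The gap is in the core perturbation step. First, $F_A-\langle\cdot,\cdot\rangle$ is not convex (the duality product is bilinear, not affine), so the Br{\o}ndsted--Rockafellar theorem cannot be applied to it as you describe; Ekeland's principle does apply to this nonnegative lsc function, but it only yields an approximate minimizer of a perturbed function, not membership in $\gra A$. Second, and more importantly, the mechanism you assign to (NI) is not the one that makes the proof work. Producing ``an exact subgradient of $F_A$ at a nearby point whose bidual coordinate lies in $X$'' would not give $(b,b^*)\in\gra A$: a generic element of $\partial F_A(b,b^*)\subseteq X^*\times X^{**}$, even one lying in $X^*\times X$, says nothing about whether $F_A(b,b^*)=\langle b,b^*\rangle$. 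What is actually needed is a nearby point at which the \emph{transposed point itself} is a subgradient, i.e.\ $(b^*,b)\in\partial F_A(b,b^*)$; then $F_A(b,b^*)+F_A^*(b^*,b)=2\langle b,b^*\rangle$, and since (NI) is equivalent to $F_A^*\geq\langle\cdot,\cdot\rangle$ on $X^*\times X^{**}$ while $F_A\geq\langle\cdot,\cdot\rangle$ on $X\times X^*$, both summands must equal $\langle b,b^*\rangle$, which forces $(b,b^*)\in\gra A$ by maximality. Manufacturing such a self-subgradient point with the asymmetric $\alpha$/$\beta$ control is the real content of the Marques Alves--Svaiter theorem; it requires a regularization or iteration argument (this is where the scaled product norm genuinely earns its keep), not a single application of a variational principle. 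As written, your proposal names the right tools and correctly locates where (NI) must enter, but it does not supply the step that closes the loop.
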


\begin{remark}
Since (NI) is an isomorphic notion, by Fact~\ref{MAS:BR1}, every operator of type (NI)
 is isomorphically of type (BR).
\end{remark}

The next result will allow us to show that not every continuous monotone linear operator is of type (BR) (see Remark~\ref{RProBR:2} below).

\begin{proposition}\label{ProBR1}
Let $A \colon X\rightrightarrows X^*$ be maximally monotone. Assume that
there exists $e\in X^*$
such that
$e \notin \overline{\ran A}$ and
 that
\begin{align*} \langle a^*,a \rangle \ge \langle e,a \rangle^2,\quad \forall
(a,a^*)\in \gra A.
\end{align*}
Then  $A$ is not of type (BR), and $P_{X^*}\left[\dom F_A\right]\nsubseteq\overline{\ran A}$.
\end{proposition}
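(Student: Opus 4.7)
The plan is to test both conclusions at the single point $(x,x^*)=(0,e)$ and to leverage the quadratic lower bound supplied by the hypothesis. For any $(a,a^*)\in\gra A$ the pairing expands as
$$\langle 0-a,\, e-a^*\rangle \;=\; \langle a,a^*\rangle - \langle e,a\rangle \;\geq\; \langle e,a\rangle^2 - \langle e,a\rangle \;=\; \bigl(\langle e,a\rangle-\tfrac12\bigr)^2 - \tfrac14 \;\geq\; -\tfrac14,$$
so $\inf_{(a,a^*)\in\gra A}\langle 0-a,\, e-a^*\rangle \geq -\tfrac14$.

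To show that $A$ is not of type (BR), I would use $e\notin\overline{\ran A}$ to set $d:=\di(e,\overline{\ran A})>0$, and then pick any $\beta\in\,]0,d[$ together with any $\alpha>\tfrac{1}{4\beta}$, so that $\alpha\beta>\tfrac14$. At $(x,x^*)=(0,e)$ the hypothesis in the definition of (BR) is then satisfied:
$$\inf_{(a,a^*)\in\gra A}\langle 0-a,\, e-a^*\rangle \;\geq\; -\tfrac14 \;>\; -\alpha\beta.$$
If $A$ were of type (BR) there would exist $(b,b^*)\in\gra A$ with $\|0-b\|<\alpha$ and $\|e-b^*\|<\beta<d$; the second inequality would place $b^*\in\ran A$ strictly within distance $d$ of $e$, contradicting the definition of $d$. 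Hence $A$ is not of type (BR).

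For the Fitzpatrick-function statement, the same quadratic bound yields
$$F_A(0,e) \;=\; \sup_{(a,a^*)\in\gra A}\bigl(\langle a,e\rangle - \langle a,a^*\rangle\bigr) \;\leq\; \sup_{a\in\dom A}\bigl(\langle e,a\rangle - \langle e,a\rangle^2\bigr) \;\leq\; \tfrac14,$$
so $(0,e)\in\dom F_A$. Consequently $e\in P_{X^*}[\dom F_A]$, while by hypothesis $e\notin\overline{\ran A}$, which proves $P_{X^*}[\dom F_A]\nsubseteq\overline{\ran A}$.

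The whole argument hinges on one observation — selecting the test pair $(0,e)$ — after which the assumption $\langle a^*,a\rangle\geq\langle e,a\rangle^2$ automatically caps both $-\langle 0-a,\,e-a^*\rangle$ and $F_A(0,e)$ by $\tfrac14$, and the separation $e\notin\overline{\ran A}$ simultaneously obstructs the (BR) approximation and exhibits $e$ as a point of $P_{X^*}[\dom F_A]$ outside $\overline{\ran A}$. I do not anticipate any real obstacle beyond spotting this choice of test point.
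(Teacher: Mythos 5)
Your proof is correct and follows essentially the same route as the paper: both test the pair $(0,e)$, use the hypothesis $\langle a^*,a\rangle\ge\langle e,a\rangle^2$ to bound the infimum below by $-\tfrac14$, and then exploit $e\notin\overline{\ran A}$ for both conclusions. The only cosmetic difference is that where the paper invokes Proposition~\ref{ProBR2} (finiteness of the infimum at $(x,x^*)$ for a type (BR) operator forces $x^*\in\overline{\ran A}$), you inline the relevant special case by choosing $\beta<\operatorname{dist}(e,\overline{\ran A})$ and $\alpha>\tfrac{1}{4\beta}$ directly from the definition of type (BR), which is equally valid.
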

\begin{proof}
Let $(x_0, x_0^*):=(0,e)$. Then we have
\begin{align}
&\inf_{(a,a^*)\in\gra A}\big(\langle a-x_0, a^*- x^*_0\rangle\big)=
\inf_{(a,a^*)\in\gra A}\big(\langle a, a^*- e\rangle\big)=\inf_{(a,a^*)\in\gra A}\big(\langle a, a^*\rangle-\langle a, e\rangle\big)\nonumber\\
&\geq \inf_{(a,a^*)\in\gra A}\big(\langle a, e\rangle^2-\langle a, e\rangle\big)
\geq \inf_{t\in\RR}\big(t^2-t\big)=-\frac{1}{4}.\label{ProBR1L:1}
\end{align}
Suppose to the contrary that $A$ is of type (BR).
Then
 Fact~\ref{ProBR2} implies that $e\in\overline{\ran A}$, which contradicts the assumption that $e\notin\overline{\ran A}$.
Hence $A$ is not of type (BR).  By \eqref{ProBR1L:1}, $(0,e)\in\dom F_A$ and  $e\notin\overline{\ran A}$. Hence
$P_{X^*}\left[\dom F_A\right]\nsubseteq\overline{\ran A}$.
\end{proof}

\subsection{Operators of type (D)}

We now turn to type (D) operators.

 \begin{fact}[Simons]\label{Satu:1}\emph{(See \cite[Theorem~28.9]{Si2}.)}
 Let $Y$ be a Banach space, and $L:Y\rightarrow X$ be continuous and linear with $\ran L$ closed and
  $\ran L^*=Y^*$.
 Let $A:X\rightrightarrows X^*$ be  monotone
 with $\dom A\subseteq\ran L$ such that $\gra A\neq\varnothing$.
  Then $A$ is maximally monotone
 if, and only if $A$ is $\ran L$--saturated and $L^*AL$ is maximally monotone.
\end{fact}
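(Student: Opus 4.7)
The plan is to prove both implications directly from the definition of maximal monotonicity, using the standard fact that $\ran L$ closed combined with $\ran L^*=Y^*$ forces $L$ to be injective and hence $\kernel L^*=(\ran L)^\perp$. Everything rests on the routine identity $\scal{y-v}{L^*x^*-L^*u^*}=\scal{Ly-Lv}{x^*-u^*}$, which I would record first; it immediately yields monotonicity of $L^*AL$ from that of $A$, and it links monotone relatedness in the ``upstairs'' graph of $A$ with monotone relatedness in the ``downstairs'' graph of $L^*AL$.

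For the forward direction, I would start from $A$ maximally monotone. Saturation comes from choosing any $x\in\dom A$, $x^*\in Ax$ and $z^*\in(\ran L)^\perp$: since $\dom A\subseteq\ran L$, the term $\scal{x-a}{z^*}$ vanishes for every $(a,a^*)\in\gra A$, so $(x,x^*+z^*)$ is monotonically related to $\gra A$ and maximality forces $x^*+z^*\in Ax$. For maximality of $L^*AL$, I would take $(y_0,y_0^*)$ monotonically related to $\gra(L^*AL)$, write $y_0^*=L^*x_0^*$ using $\ran L^*=Y^*$, and then observe (via the identity) that $(Ly_0,x_0^*)$ is monotonically related to $\gra A$, so maximality of $A$ yields $x_0^*\in A(Ly_0)$, hence $y_0^*\in(L^*AL)(y_0)$.

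The backward direction is where the main obstacle sits: given $(x_0,x_0^*)$ monotonically related to $\gra A$, one must first place $x_0$ in $\ran L$ before any descent to $L^*AL$ is possible. I plan to handle this by contradiction. If $x_0\notin\ran L$, Hahn--Banach (using $\ran L$ closed) delivers $z^*\in(\ran L)^\perp$ with $\scal{x_0}{z^*}=1$; picking any $(a,a^*)\in\gra A$, saturation yields $a^*+\lambda z^*\in Aa$ for every $\lambda\in\RR$, so the monotone relatedness reduces to
\begin{equation*}
0\leq\scal{x_0-a}{x_0^*-a^*}-\lambda,\qquad\forall\,\lambda\in\RR,
\end{equation*}
which is absurd. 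Writing $x_0=Ly_0$, the same identity as before shows $(y_0,L^*x_0^*)$ is monotonically related to $\gra(L^*AL)$, so maximality downstairs produces $\tilde x^*\in Ax_0$ with $L^*\tilde x^*=L^*x_0^*$, i.e.\ $x_0^*-\tilde x^*\in\kernel L^*=(\ran L)^\perp$; one last appeal to saturation upgrades $\tilde x^*\in Ax_0$ to $x_0^*\in Ax_0$.
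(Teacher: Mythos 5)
Your argument is correct and complete. Note, however, that the paper offers no proof of this statement at all: it is recorded as a Fact with a citation to Simons \cite[Theorem~28.9]{Si2}, where it is obtained inside Simons' general framework for monotone multifunctions. Your proposal is therefore a genuinely self-contained, elementary alternative: everything is reduced to the adjoint identity $\scal{y-v}{L^*x^*-L^*u^*}=\scal{Ly-Lv}{x^*-u^*}$ together with two uses of Hahn--Banach (to get injectivity of $L$ from $\ran L^*=Y^*$, and to separate a point $x_0\notin\ran L$ from the closed subspace $\ran L$). Each hypothesis is used exactly where it must be: surjectivity of $L^*$ lifts a downstairs functional $y_0^*$ to some $x_0^*\in X^*$; closedness of $\ran L$ powers the separation step; and saturation is what lets you both force $x_0\in\ran L$ (via the unbounded-in-$\lambda$ inequality) and upgrade $\tilde x^*\in Ax_0$ to $x_0^*\in Ax_0$ at the end, since $x_0^*-\tilde x^*\in\kernel L^*=(\ran L)^\perp$. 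One small remark: the identity $\kernel L^*=(\ran L)^\perp$ holds for any bounded linear $L$ and does not require injectivity, so your ``hence'' there is a non sequitur, though a harmless one. The trade-off between the two routes is the usual one: Simons' treatment situates the result among his saturation and restriction theorems and yields companion statements for free, whereas your direct verification is shorter, uses no machinery beyond the definitions, and makes transparent why each hypothesis is needed.
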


Fact~\ref{Satu:1}  leads us to  the following result.

\begin{theorem}\label{Simonco:1}\emph{(See \cite[Theorem~2.17]{BBWY3}.)}
 Let $Y$ be a Banach space, and $L:Y\rightarrow X$ be an isomorphism into $X$.
 Let $T:Y\rightrightarrows Y^*$ be monotone.
  Then $T$ is maximally monotone
 if, and only if  $(L^*)^{-1}TL^{-1}$, mapping $X$ into $X^*$, is maximally monotone.
\end{theorem}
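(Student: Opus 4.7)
The plan is to exhibit $T$ as the pull-back $L^*AL$ for $A:=(L^*)^{-1}TL^{-1}$ and then invoke Fact~\ref{Satu:1}. First I would verify the structural hypotheses on $L$ required by that fact. Since $L$ is an isomorphism into $X$, its range $\ran L$ is a closed subspace of $X$; and since $\kernel L=\{0\}$, the closed range theorem yields $\ran L^*=(\kernel L)^{\perp}=Y^*$. Hence the assumptions of Fact~\ref{Satu:1} on $L$ are all in place, and it remains only to set up the associated operator correctly.

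Next I would record the elementary properties of $A$. Monotonicity of $A$ is a one-line consequence of the monotonicity of $T$ via the identity
\begin{equation*}
\langle Lu_1-Lu_2,x_1^*-x_2^*\rangle=\langle u_1-u_2,L^*x_1^*-L^*x_2^*\rangle,
\end{equation*}
valid whenever $L^*x_i^*\in T(u_i)$; nonemptiness of $\gra A$ whenever $\gra T\neq\varnothing$ follows from the surjectivity of $L^*$; and the inclusion $\dom A\subseteq\ran L$ is built into the definition. The two key identities to establish are $L^*AL=T$, which follows from $L^*(L^*)^{-1}T(y)=T(y)\cap\ran L^*=T(y)$, and the fact that $A$ is automatically $\ran L$-saturated, since $(\ran L)^{\perp}=\kernel L^*$ and adding an element of $\kernel L^*$ to some $x^*\in A(x)$ leaves the defining condition $L^*x^*\in T(L^{-1}x)$ unchanged.

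With these observations in hand, Fact~\ref{Satu:1} applied to $A$ reads: $A$ is maximally monotone if and only if $A$ is $\ran L$-saturated (automatic) and $L^*AL=T$ is maximally monotone, which is precisely the asserted equivalence. I do not anticipate a genuine obstacle here; the one conceptual point worth flagging is that the set-valued nature of the inverse $(L^*)^{-1}$ automatically absorbs $\kernel L^*=(\ran L)^{\perp}$ into each fibre of $A$, so the saturation hypothesis in Fact~\ref{Satu:1} is verified at no extra cost.
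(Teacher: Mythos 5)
Your proposal is correct and follows exactly the route the paper intends: the theorem is presented as a consequence of Fact~\ref{Satu:1}, and your reduction --- checking that $\ran L$ is closed, that $\ran L^*=Y^*$ via the closed range theorem and injectivity of $L$, that $A:=(L^*)^{-1}TL^{-1}$ is monotone with $\dom A\subseteq\ran L$, that $L^*AL=T$, and that the $\ran L$--saturation is automatic because $(L^*)^{-1}$ absorbs $\kernel L^*=(\ran L)^{\perp}$ into each fibre --- is precisely that argument. No gaps.
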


The following consequence will allow us to construct maximally
monotone operators that are not of type (D) in a very wide variety of
non-reflexive Banach spaces.

\begin{corollary}[Subspaces]\label{Simonco:2}\emph{(See \cite[Corollary~2.18]{BBWY3}.)}
 Let $Y$ be a Banach space, and $L:Y\rightarrow X$ be an isomorphism into $X$.
  Let $T:Y\rightrightarrows Y^*$ be monotone. The following hold.
  \begin{enumerate}
\item
 \label{STV:s1}
 Assume that $(L^*)^{-1}TL^{-1}$ is maximally monotone of type (D). Then $T$ is maximally monotone of type (D).
 In particular, every Banach subspace of a type (D)
 space is of type (D).
 \item  \label{STV:s2}
If  $T$ is maximally monotone and not of type (D),
then  $(L^*)^{-1}TL^{-1}$ is a maximally monotone operator mapping
$X$ into $X^*$ that
 is not of type (D).
   \end{enumerate}
\end{corollary}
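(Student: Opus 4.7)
The plan is to reduce everything to Theorem~\ref{Simonco:1} plus one bookkeeping computation transferring the type (D) defining net between $X$ and $Y$ along the isomorphism $L$. Part~\ref{STV:s2} will then drop out as the contrapositive of part~\ref{STV:s1}: if $T$ is maximally monotone, Theorem~\ref{Simonco:1} makes $A:=(L^*)^{-1}TL^{-1}$ a maximally monotone operator mapping $X$ into $X^*$; and if that $A$ were of type (D), part~\ref{STV:s1} would force $T$ itself to be of type (D), contradicting the hypothesis.

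For part~\ref{STV:s1}, assume $A=(L^*)^{-1}TL^{-1}$ is maximally monotone of type (D). Theorem~\ref{Simonco:1} already gives that $T$ is maximally monotone, so only the (D) property remains. Because $L$ is an isomorphism into $X$ it is bounded below, hence by the closed range theorem $L^*\colon X^*\to Y^*$ is surjective (and $L^{**}\colon Y^{**}\to X^{**}$ is continuous). Given $(y^{**},y^*)\in Y^{**}\times Y^*$ with $\inf_{(b,b^*)\in\gra T}\langle b-y^{**},b^*-y^*\rangle\geq0$, I would choose any $x^*\in X^*$ with $L^*x^*=y^*$ and set $x^{**}:=L^{**}y^{**}$. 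The routine dual-pairing identity
\begin{align*}
\langle Lb-L^{**}y^{**},a^*-x^*\rangle_{X^{**}\times X^*}
=\langle b-y^{**},L^*a^*-L^*x^*\rangle_{Y^{**}\times Y^*}
\end{align*}
combined with the description $\gra A=\{(Lb,a^*):(b,L^*a^*)\in\gra T\}$ shows that $(x^{**},x^*)$ is monotonically related to $\gra A$.

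Since $A$ is of type (D), there exists a bounded net $(a_\alpha,a^*_\alpha)_{\alpha\in\Gamma}$ in $\gra A$ with $a_\alpha\weakly^{\!*} x^{**}$ in $X^{**}$ and $a^*_\alpha\to x^*$ in norm in $X^*$. Put $b_\alpha:=L^{-1}a_\alpha\in Y$ and $b^*_\alpha:=L^*a^*_\alpha\in Y^*$; then $(b_\alpha,b^*_\alpha)\in\gra T$, and boundedness is preserved via $\|b_\alpha\|\leq\|L^{-1}\|\,\|a_\alpha\|$ and $\|b^*_\alpha\|\leq\|L^*\|\,\|a^*_\alpha\|$. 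Norm convergence $b^*_\alpha\to y^*$ follows from $b^*_\alpha-y^*=L^*(a^*_\alpha-x^*)$. For weak$^*$ convergence $b_\alpha\weaklys y^{**}$, I use surjectivity of $L^*$: every $y^*\in Y^*$ equals $L^*z^*$ for some $z^*\in X^*$, and then
\begin{align*}
\langle b_\alpha,y^*\rangle=\langle Lb_\alpha,z^*\rangle=\langle a_\alpha,z^*\rangle\longrightarrow\langle x^{**},z^*\rangle=\langle L^{**}y^{**},z^*\rangle=\langle y^{**},y^*\rangle.
\end{align*}
Hence $T$ is of type (D). The ``in particular'' clause is the special case where $L$ is the inclusion of a closed subspace.

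The only real obstacle is the first paragraph's pairing identity and the associated surjectivity claim for $L^*$; once those are in place, everything is a mechanical transfer. There is one subtle point worth flagging: the choice of $x^*\in(L^*)^{-1}\{y^*\}$ is not unique, but the argument only needs existence, so the non-uniqueness causes no trouble. Likewise, the type (D) net produced on the $X$-side lies inside $\gra A$, whose first coordinates already sit in $\ran L$, so the pull-backs $b_\alpha=L^{-1}a_\alpha$ are well defined without further comment.
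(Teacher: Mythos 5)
Your proof is correct, and it follows exactly the route the paper intends: the survey gives no proof of this corollary (it defers to \cite[Corollary~2.18]{BBWY3}), but presents it as a consequence of Theorem~\ref{Simonco:1}, and your argument---maximality via that theorem, surjectivity of $L^*$ from the closed range/Hahn--Banach argument, and the transfer of the monotonically related point and of the bounded type~(D) net along $L$, $L^*$, $L^{**}$, with part~(ii) as the contrapositive---is precisely that derivation. The minor reuse of the symbol $y^*$ for a generic functional in the weak$^*$ convergence step is cosmetic and harmless.
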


\begin{remark}
Note that it follows that $X$ is of type (D) whenever $X^{**}$ is.
The necessary part of Theorem~\ref{Simonco:1} was proved by
Bueno and Svaiter in \cite[Lemma~3.1]{BuSv}. A similar result to Corollary~\ref{Simonco:2}\ref{STV:s1}
was also obtained by Bueno and Svaiter in \cite[Lemma~3.1]{BuSv} with
the additional assumption that $T$ be maximally monotone.
\end{remark}

 Theorem~\ref{PBABD:2} below
allows  us to  construct various maximally monotone operators
--- both linear and nonlinear --- that are not of type (D). The idea of constructing the operators in
the following
 fashion is based upon  \cite[Theorem~5.1]{BB} and was stimulated by \cite{BuSv}.
\begin{theorem}[Predual constructions]\emph{(See \cite[Theorem~3.7]{BBWY3}.)}
\label{PBABD:2}
Let $A: X^*\rightarrow X^{**}$ be linear and continuous.
 Assume that $\ran A \subseteq X$ and that there exists $e\in X^{**}\backslash X$ such that
\begin{align*}
\langle Ax^*,x^*\rangle=\langle e,x^*\rangle^2,\quad \forall x^*\in X^*.
\end{align*}
Let $ P$ and $S$ respectively  be the symmetric part and antisymmetric
 part of $A$.  Let $T:X\rightrightarrows X^*$  be defined by
\begin{align}\gra T&:=\big\{(-Sx^*,x^*)\mid x^*\in X^*, \langle e, x^*\rangle=0\big\}
=\big\{(-Ax^*,x^*)\mid x^*\in X^*, \langle e, x^*\rangle=0\big\}.\label{PBABA:a1}
\end{align}
Let $f:X\rightarrow\RX$ be a proper lower semicontinuous and convex function.
 Set $F:=f\oplus f^*$ on $ X\times X^*$.
Then the following hold.
\begin{enumerate}
\item\label{PBAB:em01}
$A$ is a maximally monotone operator on $X^*$ that is neither of type (D) nor unique.
\item\label{PBAB:emmaz1}
$Px^*=\langle x^*,e\rangle e,\ \forall x^*\in X^*.$

\item\label{PBAB:em1}
 $T$
is maximally monotone and skew on $X$.

\item\label{PBAB:emma1}
$\gra  T^*=\{(Sx^*+re,x^*)\mid x^*\in X^*,\ r\in\RR\}$.

\item\label{PBAB:emma2}
$-T$ is not maximally monotone.

\item\label{PBAB:emm1}
 $T$
is not of type (D).

\item\label{PBAB:em2}
$F_T=\iota_C$, where
$
C:=\{(-Ax^*,x^*)\mid x^*\in X^*\}$.

 \item\label{PBAB:emu2}
$T$ is not unique.

\item\label{PBAB:emr3}
$T$ is not of type (BR).

 \item\label{BCC:0a2}   If $\dom T\cap\inte\dom\partial f\neq\varnothing$,
then $T+\partial f$ is maximally monotone.
\item \label{BCC:02}$F$ and $F_T$ are BC--functions  on $X\times X^*$.
\item\label{BCC:2}Moreover,  \begin{align*}\bigcup_{\lambda>0} \lambda
\big(P_{X^*}(\dom F_T)-P_{X^*}(\dom F)\big)=X^*,\end{align*} while,
assuming that there exists $(v_0,v_0^*)\in X\times X^*$ such that
\begin{align}
f^*(v_0^*)+f^{**}(v_0-A^*v^*_0)<\langle
v_0,v^*_0\rangle\label{IeSp:3},
\end{align} \index{BC--function} then
$F_T\Box_1F$ is not a BC--function.

 \item\label{BCC:3} Assume that
$\left[\ran A-\bigcup_{\lambda>0} \lambda\dom f\right]$
is a closed subspace of $X$ and that
$$\varnothing\neq\dom f^{**}\small\circ A^*|_{X^*}\nsubseteqq \{e\}_{\bot}.$$
Then $T+\partial f$ is not of type (D).

\item\label{BCC:4}
Assume that $\dom f^{**}=X^{**}$.
Then $T+\partial f$ is a maximally monotone operator that is not of type (D).
\end{enumerate}
\end{theorem}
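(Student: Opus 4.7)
My plan is to deduce \ref{BCC:4} directly from the two preceding items \ref{BCC:0a2} and \ref{BCC:3}, together with the observation that $\dom f^{**}=X^{**}$ is a strong enough hypothesis to automatically verify the geometric side-conditions that those items require. So the whole task reduces to checking two sets of hypotheses.

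First I would record that $\dom f=X$. Since $f$ is proper, lower semicontinuous, and convex, the Fenchel--Moreau theorem yields $f^{**}|_X=f$, so $\dom f^{**}=X^{**}$ forces $f<+\infty$ on all of $X$. A proper convex function finite everywhere on a Banach space is continuous (standard Baire--category argument), hence subdifferentiable everywhere, so $\inte\dom\partial f=X$. Combined with \ref{PBAB:em1}, which gives $\dom T\neq\varnothing$, this produces $\dom T\cap\inte\dom\partial f\neq\varnothing$, and \ref{BCC:0a2} at once yields that $T+\partial f$ is maximally monotone.

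Next I would verify the two hypotheses of \ref{BCC:3} to conclude that $T+\partial f$ is not of type (D). Because $\dom f=X$, one has $\bigcup_{\lambda>0}\lambda\dom f=X$, so $\ran A-\bigcup_{\lambda>0}\lambda\dom f=X$, trivially a closed subspace. For the second hypothesis, $\dom f^{**}=X^{**}$ gives $A^*x^*\in\dom f^{**}$ for every $x^*\in X^*$, so $\dom(f^{**}\circ A^*|_{X^*})=X^*\neq\varnothing$. Since $e\in X^{**}\setminus X$ we have $e\neq 0$, hence there is $x^*\in X^*$ with $\langle e,x^*\rangle\neq 0$, showing $X^*\nsubseteqq\{e\}_{\bot}$. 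Both hypotheses of \ref{BCC:3} therefore hold, and $T+\partial f$ is not of type (D).

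The argument is a routine reduction; the only step that deserves a moment's care is the implication $\dom f^{**}=X^{**}\Longrightarrow\dom f=X$, which rests on the Fenchel--Moreau identity. Once this is in hand, the continuity of $f$ on $X$ and the nontriviality of $e$ clear every remaining obstacle, and no serious difficulty arises.
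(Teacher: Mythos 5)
Your derivation of item (xiv) from items (x) and (xiii) is correct, and it is the intended one: the paper states this theorem without proof, citing \cite[Theorem~3.7]{BBWY3}, where (xiv) is obtained by exactly this reduction. The chain of checks is sound: $\dom f^{**}=X^{**}$ plus $f^{**}|_X=f$ (Fenchel--Moreau, valid since $f$ is proper, convex and lower semicontinuous) gives $\dom f=X$; an everywhere-finite lsc convex function on a Banach space is continuous, so $\inte\dom\partial f=X$, and since $(0,0)\in\gra T$ the constraint qualification of (x) holds; moreover $\bigcup_{\lambda>0}\lambda\dom f=X$ makes $\ran A-X=X$ a closed subspace, and $\dom\big(f^{**}\circ A^*|_{X^*}\big)=X^*\nsubseteqq\{e\}_{\bot}$ because $e\neq 0$, so (xiii) applies. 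One small point of hygiene: your parenthetical ``a proper convex function finite everywhere on a Banach space is continuous'' needs lower semicontinuity (or an equivalent hypothesis) to be true --- a discontinuous linear functional is a counterexample otherwise. You do have lsc available, so the application is fine, but the claim should be stated with that hypothesis.

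The caveat is that this establishes only the last of the fourteen assertions. Items (i)--(xiii), which carry essentially all of the substance --- the maximal monotonicity and non-type-(D)-ness of $A$ and of $T$, the formula for $\gra T^*$, the identification $F_T=\iota_C$, the failure of uniqueness and of type (BR), the sum result (x), and the BC--function statements --- are used as black boxes and never proved. If the task was item (xiv) alone, you are done; if it was the theorem as stated, the real work has not been touched.
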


\begin{remark}\label{RProBR:1} Let $A$ be defined as in Theorem~\ref{PBABD:2}
By Proposition~\ref{ProBR1}, $A$ is not of type (BR) and then  Fact~\ref{MAS:BR1} implies that
$A$ is not of type (D). Moreover, $P_{X^*}\left[\dom F_A\right]\nsubseteq\overline{\ran A}$.
\end{remark}

The first application of this result is to $c_0$.

\begin{example}[$c_0$]\label{FPEX:1}{(See \cite[Example~4.1]{BBWY3}.)}
 Let $X: = c_0$, with norm $\|\cdot\|_{\infty}$ so that
  $X^* = \ell^1$ with norm $\|\cdot\|_{1}$, and  $X^{**}=\ell^{\infty}$  with its second dual norm
$\|\cdot\|_{*}$.
Fix
$\alpha:=(\alpha_n)_{n\in\NN}\in\ell^{\infty}$ with $\limsup
\alpha_n\neq0$, and let
$A_{\alpha}:\ell^1\rightarrow\ell^{\infty}$ be defined  by
\begin{align}\label{def:Aa}
(A_{\alpha}x^*)_n:=\alpha^2_nx^*_n+2\sum_{i>n}\alpha_n \alpha_ix^*_i,
\quad \forall x^*=(x^*_n)_{n\in\NN}\in\ell^1.\end{align}
\allowdisplaybreaks Now let $ P_{\alpha}$ and $S_{\alpha}$
respectively
  be the symmetric part and antisymmetric
 part of $A_{\alpha}$.  Let $T_{\alpha}:c_{0}\rightrightarrows X^*$  be defined by
\begin{align}\gra T_{\alpha}&
:=\big\{(-S_{\alpha} x^*,x^*)\mid x^*\in X^*,
 \langle \alpha, x^*\rangle=0\big\}
=\big\{(-A_{\alpha} x^*,x^*)\mid x^*\in X^*,
 \langle \alpha, x^*\rangle=0\big\}\nonumber\\
&=\big\{\big((-\sum_{i>n}
\alpha_n \alpha_ix^*_i+\sum_{i<n}\alpha_n \alpha_ix^*_i)_{n\in\NN}, x^*\big)
\mid x^*\in X^*, \langle \alpha, x^*\rangle=0\big\}.\label{PBABA:Ea1}
\end{align}
Then
\begin{enumerate}
\item\label{BCCE:A01} $\langle A_{\alpha}x^*,x^*\rangle=\langle \alpha , x^*\rangle^2,
\quad \forall x^*=(x^*_n)_{n\in\NN}\in\ell^1$
and \eqref{PBABA:Ea1} is well defined.

\item \label{BCCE:SA01} $A_{\alpha}$ is a maximally monotone
 operator on $\ell^1$ that is neither of type (D) nor unique.

\item\label{BCCE:A1} $T_{\alpha}$
is a maximally monotone  operator on $c_0$ that is not of type (D).
Hence  $c_0$  is not of type (D).
\item\label{BCCE:Ac1} $-T_{\alpha}$
is  not  maximally monotone.

\item \label{BCCE:Au2} $T_{\alpha}$ is neither unique nor of type (BR).

\item\label{BCCE:A2} $F_{T_{\alpha}}\Box_1
 (\|\cdot\|\oplus\iota_{B_{X^*}})$ is not a BC--function.\index{BC--function}
\item  \label{BCCE:A3}  $T_{\alpha}+
\partial \|\cdot\|$ is a maximally monotone operator on $c_0(\NN)$ that is not of type (D).
\item\label{BCCE:A4} If $\tfrac{1}{\sqrt{2}}<\|\alpha\|_*\leq 1$, then
$F_{T_{\alpha}}\Box_1 (\tfrac{1}{2}\|\cdot\|^2\oplus \tfrac{1}{2}\|\cdot\|^2_{1})$
 is not a BC--function.
\item \label{BCCE:A5}  For  $\lambda>0$,  $T_{\alpha}+\lambda J$
 is a maximally monotone operator on $c_0$ that is not of type (D).
\item\label{BCCE:A5a}  Let $\lambda>0$ and
 a linear isometry $L$
 mapping $c_0$ to  a subspace of $C[0,1]$ be given. Then
both  $(L^*)^{-1}(T_{\alpha}+\partial \|\cdot\|)L^{-1}$ and
 $(L^*)^{-1}(T_{\alpha}+\lambda J)L^{-1}$ are
  maximally monotone operators that are  not of type (D).
Hence $C[0,1]$ is not of type (D).
\item \label{BCCE:A06} Every Banach space that contains an isomorphic copy of $c_0$ is not of type (D).

\item \label{BCCE:A6}  Let $G:\ell^1\rightarrow\ell^{\infty} $
 be \emph{Gossez's operator } \cite{Gossez1} defined by
\begin{align*}
\big(G(x^*)\big)_n:=\sum_{i>n}x^*_i-
\sum_{i<n}x^*_i,\quad \forall(x^*_n)_{n\in\NN}\in\ell^1.
\end{align*}
Then $T_e: c_0\To\ell^1$ as defined by
\begin{align*}
\gra T_e:=\{(-G(x^*),x^*)\mid x^*\in\ell^1, \langle x^*, e\rangle=0\}
\end{align*}
is a maximally monotone operator that is not of type (D), where
$e:=(1,1,\ldots,1,\ldots)$.

\item \label{BCCE:A7} Moreover, $G$ is a  unique maximally monotone operator that is not of type (D),
but $G$ is isomorphically of type (BR). \qede
\end{enumerate}
\end{example}
\begin{remark}\label{RProBR:2}
Let $A_{\alpha}$ be defined as in Example~\ref{FPEX:1}.
By Remark~\ref{RProBR:1}, $A_{\alpha}$ is not of type (BR) and $P_{X^*}\left[\dom F_{A_{\alpha}}\right]\nsubseteq\overline{\ran A}$.
\end{remark}

\begin{remark}
The maximal monotonicity of the operator $T_{e}$ in Example~\ref{FPEX:1}\ref{BCCE:A6}
 was also  verified by Voisei and Z{\u{a}}linescu in
\cite[Example~19]{VZ} and  later a direct proof was given by Bueno and
Svaiter in  \cite[Lemma~2.1]{BuSv}. Herein we have given a new
 proof of the above results.

 Bueno and Svaiter had already showed  that
$T_{e}$ is not of type (D) in \cite{BuSv}. They  also showed
 that each Banach space that contains an isometric (isomorphic) copy of $c_0$
  is not of type (D) in \cite{BuSv}.
Example~\ref{FPEX:1}\ref{BCCE:A06} recaptures their result, while
Example~\ref{FPEX:1}\ref{BCCE:A2}\&\ref{BCCE:A4}
 provide a negative answer to Simons'
    \cite[Problem~22.12]{Si2}.
    \qede
\end{remark}

In our earlier work we were able to especially exploit some properties of the \emph{quasi-reflexive} James
space ($\J$ is of codimension one in $\J^{**}$):

\begin{definition}
The \emph{James space}, $\J$,  consists of all the sequences
$x=(x_n)_{n\in\NN}$ in $c_0$ with the finite norm
\begin{align*}
\|x\|:=\sup_{n_1<\cdots<n_k}\big((x_{n_1}-x_{n_2})^2+(x_{n_2}-x_{n_3})^2+
\cdots+(x_{n_{k-1}}-x_{n_k})^2\big)^{\tfrac{1}{2}}.
\end{align*}
\end{definition}
\begin{corollary}[Higher duals]\emph{(See \cite[Corollary~4.14]{BBWY3}.)} Suppose that both $X$ and $X^*$  admit maximally monotone operators not of type (D) then so does every higher dual space $X^{n}$. In particular, this applies to both $X=c_0$ and $X=\J$.
\end{corollary}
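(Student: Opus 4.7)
The plan is to reduce the statement to iterated applications of Corollary~\ref{Simonco:2}\ref{STV:s2} via the canonical embedding $\iota_Y : Y \hookrightarrow Y^{**}$ defined by $\iota_Y(y)(y^*) := \langle y, y^* \rangle$, which is always a linear isometry and hence an isomorphism into $Y^{**}$. Consequently, given any Banach space $Y$ that admits a maximally monotone operator $T$ that is not of type (D), applying Corollary~\ref{Simonco:2}\ref{STV:s2} with $L = \iota_Y$ yields a maximally monotone operator on $Y^{**}$ that is also not of type (D). Equivalently, by the contrapositive of Corollary~\ref{Simonco:2}\ref{STV:s1}, a Banach space that isomorphically contains a non type (D) space cannot itself be of type (D).

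I would then proceed by induction on $n$ (interpreting $X^{(1)} := X^*$, $X^{(2)} := X^{**}$, and so on). The base cases $X^{(1)} = X^*$ and $X^{(2)} = X^{**}$ are immediate: the first from the hypothesis, the second by the preceding observation applied to $Y = X$. For the inductive step, iterating the embedding $Y \hookrightarrow Y^{**}$ produces chains
\[
X \hookrightarrow X^{**} \hookrightarrow X^{(4)} \hookrightarrow \cdots
\quad\text{and}\quad
X^* \hookrightarrow X^{***} \hookrightarrow X^{(5)} \hookrightarrow \cdots,
\]
so every higher dual $X^{(n)}$ isometrically contains either $X$ (when $n$ is even) or $X^*$ (when $n$ is odd). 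Corollary~\ref{Simonco:2}\ref{STV:s2} then transports an explicit non type (D) maximally monotone operator from the appropriate space up to $X^{(n)}$, completing the induction.

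For the specific examples: when $X = c_0$, Example~\ref{FPEX:1}\ref{BCCE:A1} supplies a non type (D) maximally monotone operator on $c_0$ itself, while Example~\ref{FPEX:1}\ref{BCCE:SA01} provides the operator $A_{\alpha}$ which is non type (D) on $\ell^1 = c_0^*$, so both halves of the hypothesis are in force. When $X = \J$, the analogous non type (D) constructions on $\J$ and on $\J^*$ are carried out in \cite{BBWY3}; feeding them into the general statement yields the claim for every $\J^{(n)}$.

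The main obstacle here is essentially bookkeeping rather than analysis: once Corollary~\ref{Simonco:2} is in hand, the argument is almost immediate. The only genuine subtleties are (i) handling the even and odd parities of $n$ uniformly, which the two parallel chains above accomplish, and (ii) locating the requisite non type (D) operators on $\J$ and $\J^*$ in the preceding material, since the excerpt itself exhibits explicit non type (D) examples only for $c_0$ and $\ell^1$; one must verify that the James-space analogues in \cite{BBWY3} really do cover both $\J$ and $\J^*$ before concluding.
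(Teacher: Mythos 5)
Your argument is correct and is essentially the intended one: the paper's machinery (Corollary~\ref{Simonco:2}\ref{STV:s2} applied to the canonical isometric embedding $Y\hookrightarrow Y^{**}$, exactly as flagged in the remark that $X$ is of type (D) whenever $X^{**}$ is) yields the parity-based induction you describe, with the $c_0$ and $\J$ cases supplied by Example~\ref{FPEX:1} and the James-space constructions of \cite{BBWY3}. No gaps.
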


\section{Equivalence of three types of monotone operators}\label{s:Thrmon}

We now show that the three monotonicities  notions, of dense type (type (D)), negative infimum type and Fitzpatrick-Phelps
type  all
coincide.

\subsection{Type (NI) implies type (D)}
We reprise  two recent proofs---by  Marques Alves--Svaiter  and by Simons---showing that every maximally monotone operator of negative infimum type defined on a real Banach space is actually of dense type. We do this since the result is now central to current research and deserves being made as accessible as possible.

The key to establishing (NI) implies  (D) is in connecting the second conjugate of a convex function to the original function.
The next Fact is well known in various forms, but since we work with many dual spaces, care has to be taken in writing down the details.

\begin{fact}[Marques Alves and Svaiter]\emph{(See
\cite[Lemma~4.1]{MarSva}.)}
\label{NDMSV:2}
Let $F:X\times X^*\rightarrow\RX$ be proper lower semicontinuous  convex. Then
\begin{align}
F^{**}(x^{**}, x^*)=\liminf_{(y, y^*)\weakstarlynorm(x^{**}, x^*)}F(y, y^*),\quad \forall(x^{**}, x^*)\in
X^{**}\times X^*.\label{RPMSV:a1}
\end{align}

\end{fact}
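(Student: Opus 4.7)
\emph{Proof plan.} The idea is to realise both sides of \eqref{RPMSV:a1} as the lower semicontinuous envelope of $F$ in a suitable locally convex topology on $X^{**}\times X^*$, and then identify the two envelopes by a Hahn--Banach / Mackey--Arens argument. Set $Z:=X\times X^*$, so that $Z^{**}=X^{**}\times X^{***}$ paired with $Z^*=X^*\times X^{**}$.

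First I will invoke the classical biconjugate theorem: since $F$ is proper lsc convex on the Banach space $Z$, the biconjugate $F^{**}$ on $Z^{**}$ is precisely the $\omega(Z^{**},Z^*)$-lsc regularisation of $F$ (extended by $+\infty$ off $Z$). The weak$^*$ topology $\omega(Z^{**},Z^*)$ on $X^{**}\times X^{***}$ factors as $\omega(X^{**},X^*)\times\omega(X^{***},X^{**})$, and its trace on the subspace $X^{**}\times X^*$ is $\sigma:=\omega(X^{**},X^*)\times\omega(X^*,X^{**})$, because the restriction of $\omega(X^{***},X^{**})$ to $X^*\subseteq X^{***}$ is precisely $\omega(X^*,X^{**})$. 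This already gives
\[F^{**}(x^{**},x^*)=\liminf_{(y,y^*)\to(x^{**},x^*)\ \text{in}\ \sigma}F(y,y^*),\qquad (x^{**},x^*)\in X^{**}\times X^*.\]

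The crux is then to replace $\sigma$ by the a priori finer topology $\tau:=\omega(X^{**},X^*)\times\|\cdot\|$ on the same space $X^{**}\times X^*$. Both $\tau$ and $\sigma$ are locally convex, and on inspection they share the same continuous linear dual $X^*\times X^{**}$: elements of $X^*$ provide the $\omega(X^{**},X^*)$-continuous functionals on the first factor, while elements of $X^{**}$ provide all continuous functionals on $X^*$ both in norm (since $(X^*)^*=X^{**}$) and in $\omega(X^*,X^{**})$. By Hahn--Banach (equivalently, by the Mackey--Arens theorem), two locally convex topologies with the same continuous dual have the same closed convex sets. Applying this to the convex set $\epi F\subseteq X^{**}\times X^*\times\RR$, under the product topologies $\tau\times|\cdot|$ and $\sigma\times|\cdot|$ (which likewise share continuous dual $X^*\times X^{**}\times\RR$), yields $\overline{\epi F}^{\,\tau\times|\cdot|}=\overline{\epi F}^{\,\sigma\times|\cdot|}$. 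Since the lsc envelope of a function has epigraph equal to the closure of its epigraph, the $\tau$- and $\sigma$-lsc envelopes of $F$ agree on $X^{**}\times X^*$, which is exactly \eqref{RPMSV:a1}.

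I expect the main obstacle to be conceptual bookkeeping with the four intertwined topologies rather than any analytic depth: once the continuous duals of $\sigma$ and $\tau$ are correctly identified as $X^*\times X^{**}$, the equality follows by combining the classical biconjugate theorem with Hahn--Banach separation. The convexity of $\epi F$ is essential, as the Mackey--Arens step fails for non-convex sets; this is precisely why the result applies cleanly to functions such as Fitzpatrick functions and their conjugates, which are the intended targets in Marques Alves--Svaiter's proof of (NI) $\Rightarrow$ (D).
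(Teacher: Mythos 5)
Your proof is correct, but it takes a genuinely different (and more structural) route than the paper's. The paper argues by contradiction: it separates the point $\bigl((x^{**},x^*),F^{**}(x^{**},x^*)\bigr)$ from $\overline{\epi F}^{\,\omega(X^{**},X^*)\times\|\cdot\|\times|\cdot|}$ by a functional $((z^{*},z^{**}),\lambda)\in X^*\times X^{**}\times\RR$, rules out the degenerate case $\lambda=0$ by invoking Moreau's observation that $\dom F^{**}$ lies in the $\omega(X^{**},X^*)\times\omega(X^{***},X^{**})$-closure of $\dom F$, and then contradicts the Fenchel--Young inequality. You instead identify both sides of \eqref{RPMSV:a1} as lower semicontinuous envelopes of $F$ (extended by $+\infty$) in two locally convex topologies on $X^{**}\times X^*$ --- the trace $\sigma$ of $\omega(Z^{**},Z^*)$ and the finer $\tau=\omega(X^{**},X^*)\times\|\cdot\|$ --- and note that both are compatible with the dual pair $\langle X^{**}\times X^*,\,X^*\times X^{**}\rangle$, so they close the convex set $\epi F$ identically. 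This trades the paper's explicit separation, case analysis and Fenchel--Young computation for two standard black boxes: the Fenchel--Moreau biconjugation theorem in the dual pair $(Z^{**},Z^*)$, and the Mackey/Hahn--Banach principle that compatible topologies have the same closed convex sets. The underlying mechanism is the same in both proofs (separation by elements of $X^*\times X^{**}$, with the norm on the $X^*$ slot entering only through $(X^*,\|\cdot\|)^*=X^{**}$), but your packaging makes it transparent why $\|\cdot\|$ may replace $\omega(X^*,X^{**})$ in the second coordinate, and it dispenses with the Moreau domain lemma entirely. Two points you should make explicit to be airtight: the identification $F^{**}=\closu^{\,\omega(Z^{**},Z^*)}\widehat{F}$ requires $F^*$ to be proper, which holds because $F$ is proper, lsc and convex on a Banach space; and the passage from equal epigraph closures to equal envelopes uses that the epigraph of the lsc envelope equals the closure of $\epi F$, which is legitimate here since neither envelope takes the value $-\infty$.
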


\begin{proof}
Suppose that \eqref{RPMSV:a1} fails. Then there exists
$(x^{**}, x^*)\in
X^{**}\times X^*$ such that
\begin{align}F^{**}(x^{**}, x^*)\neq\liminf_{(y, y^*)\weakstarlynorm(x^{**}, x^*)}F(y, y^*).\label{RPMSV:a2}
\end{align}
Since $F^{**}=F$ on $X\times X^*$, then
\begin{align*}
F^{**}(x^{**}, x^*)\leq\liminf_{(y, y^*)\weakstarlynorm(x^{**}, x^*)}F^{**}(y, y^*)=\liminf_{(y, y^*)\weakstarlynorm(x^{**}, x^*)}F(y, y^*),
\end{align*}
and by\eqref{RPMSV:a2},
\begin{align}
F^{**}(x^{**}, x^*)<
\liminf_{(y, y^*)\weakstarlynorm(x^{**}, x^*)}F(y, y^*).\label{RPMSV:ab1}
\end{align}
Hence, $((x^{**}, x^*), F^{**}(x^{**}, x^*))\notin \overline{\eph F}^{w^{**}\times\|\cdot\|\times\|\cdot\|}$,
where  $\overline{\eph F}^{w^{**}\times\|\cdot\|\times\|\cdot\|}$ is
the weak$^{*}$-topology $\omega (X^{**}, X^*)\times \|\cdot\|\times \|\cdot\|$ in $X^{**}\times X^*\times\RR$.
By the Hahn-Banach Separation theorem, there exist $((z^{*}, z^{**}), \lambda)\in X^*\times X^{**}\times\RR$ and $r\in\RR$ such that
\begin{align}
\langle y, z^*\rangle+\langle y^*, z^{**}\rangle+\langle F(y, y^*)+t, -\lambda\rangle<r<
\langle x^{**}, z^*\rangle+\langle x^*, z^{**}\rangle+\langle F^{**}(x^{**}, x^*), -\lambda\rangle,
\label{RPMSV:a3}
\end{align}
for every $(y,y^*)\in\dom F$ and $t\geq0$.
Hence $\lambda\geq0$. Next we show
$\lambda\neq0.$
Suppose to the contrary that $\lambda=0$. Then \eqref{RPMSV:a3} implies that
\begin{align}
\sup_{(y,y^*)\in\dom F}\langle y, z^*\rangle+\langle y^*, z^{**}\rangle\leq r<
\langle x^{**}, z^*\rangle+\langle x^*, z^{**}\rangle.
\label{RPMSV:a4}
\end{align}
As in \cite[Section~6]{Moreau}, $\dom F^{**}$  is a subset of the closure of $\dom F$ in the topology $\omega (X^{**}, X^*)\times \omega (X^{***}, X^{**}),$
and so \begin{align*}
\sup_{(y,y^*)\in\dom F}\langle y, z^*\rangle+\langle y^*, z^{**}\rangle\geq
\langle x^{**}, z^*\rangle+\langle x^*, z^{**}\rangle
\end{align*}since $(x^{**}, x^*)\in\dom F^{**}$ by \eqref{RPMSV:ab1}. This contradicts
\eqref{RPMSV:a4}.
Hence $\lambda\neq0$ and so $\lambda>0$.

Taking $t=0$ and the supremum on $(y,y^*)$ in \eqref{RPMSV:a3}, we deduce
\begin{align*}
\lambda F^*(\tfrac{z^*}{\lambda},\tfrac{z^{**}}{\lambda})= (\lambda F)^* (z^*, z^{**})
<
\langle x^{**}, z^*\rangle+\langle x^*, z^{**}\rangle-\lambda F^{**}(x^{**}, x^*),
\end{align*}
and so we have
\begin{align}
\lambda F^*(\tfrac{z^*}{\lambda},\tfrac{z^{**}}{\lambda})+\lambda F^{**}(x^{**}, x^*)
<
\langle x^{**}, z^*\rangle+\langle x^*, z^{**}\rangle.\label{RPMSV:a5}
\end{align}
But by Fenchel-Young inequality \cite{BorVan},
\begin{align*}
\lambda F^*(\tfrac{z^*}{\lambda},\tfrac{z^{**}}{\lambda})+\lambda F^{**}(x^{**}, x^*)
\geq \lambda\big(\langle \tfrac{z^*}{\lambda}, x^{**}\rangle+\langle \tfrac{z^{**}}{\lambda}, x^{*}\rangle\big)
=\langle x^{**}, z^*\rangle+\langle x^*, z^{**}\rangle,
\end{align*}
which contradicts \eqref{RPMSV:a5}.
Hence \eqref{RPMSV:a1} holds.\end{proof}

Let $CLB(X)$ denote the set of all convex functions from $X$ to $\RR$ that are Lipschitz on the bounded subsets of $X$, and
$ \mathrm{T}_{CLB}(X^{**})$ on $X^{**}$ be the topology on $X^{**}$ such that $h^{**}$ is continuous everywhere on
$X^{**}$ for every $h\in CLB(X)$. (For more information about $CLB(X)$ and $ \mathrm{T}_{CLB}(X^{**})$ see  \cite{Si2}.)

This prepares us for the main tool in the proof of Theorem~\ref{NDMSV:3}.
 We describe two proofs, the first is due to Simons, and  the second is due to Marques Alves and Svaiter. The goal is to restrict ourselves to bounded nets.

\begin{theorem}[Marques Alves and Svaiter]\emph{(See
\cite[Theorem~4.2]{MarSva}.)}\label{NDMSV:3}
Let $F:X\times X^*\rightarrow\RX$ be proper, (norm) lower semicontinuous and convex. Consider $(x^{**}, x^*)\in
X^{**}\times X^*$. Then there exists a bounded net $(x_{\alpha}, x^*_{\alpha})_{\alpha\in I}$ in $X\times
X^*$  that converges to $(x^{**}, x^*)$ in the weak$^{*}$-topology $\omega (X^{**}, X^*)\times \|\cdot\|$
such that
\begin{align*}
F^{**}(x^{**}, x^*)=\lim F(x_{\alpha}, x^*_{\alpha}).
\end{align*}
\end{theorem}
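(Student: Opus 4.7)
The natural starting point is Fact~\ref{NDMSV:2}, which already identifies $F^{**}(x^{**},x^{*})$ as the liminf of $F(y,y^{*})$ along arbitrary nets $(y,y^{*})$ converging to $(x^{**},x^{*})$ in $\omega(X^{**},X^{*})\times\|\cdot\|$.  From the definition of liminf one can index by the directed set of pairs $(U,\epsilon)$, where $U$ is a weak$^{*}$ neighborhood of $x^{**}$ and $\epsilon>0$, and extract a net $(y_{(U,\epsilon)},y^{*}_{(U,\epsilon)})$ with $y_{(U,\epsilon)}\in U$, $\|y^{*}_{(U,\epsilon)}-x^{*}\|<\epsilon$ and $|F(y_{(U,\epsilon)},y^{*}_{(U,\epsilon)})-F^{**}(x^{**},x^{*})|<\epsilon$.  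Along this net $F$ converges to $F^{**}(x^{**},x^{*})$ and $(y,y^{*})$ converges to $(x^{**},x^{*})$ in the required mixed topology; the only missing ingredient---and the real content of the theorem---is norm boundedness of the $X$-coordinates.

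To force boundedness I would pursue Simons' approach and replace the weak$^{*}$ topology on $X^{**}$ by the finer topology $\mathrm{T}_{CLB}(X^{**})$ recalled just before the statement.  This topology has three properties that dovetail with the argument above: (i) since $\|\cdot\|\in CLB(X)$ and its biconjugate on $X^{**}$ is the dual norm, every $\mathrm{T}_{CLB}$-convergent net is automatically norm bounded; (ii) $\mathrm{T}_{CLB}$ is locally convex and its continuous linear functionals on $X^{**}$ are still exactly the elements of $X^{*}$; (iii) $X$ is $\mathrm{T}_{CLB}$-dense in $X^{**}$, a quantitative strengthening of Fact~\ref{Goldst:1}.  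Using (ii), one can rerun the Hahn--Banach separation in the proof of Fact~\ref{NDMSV:2} with $\mathrm{T}_{CLB}\times\|\cdot\|\times\|\cdot\|$ replacing $\omega(X^{**},X^{*})\times\|\cdot\|\times\|\cdot\|$, since the separating functional $(z^{*},z^{**},-\lambda)$ it produces on $X^{**}\times X^{*}\times\RR$ still has its $X^{**}$-component given by some $z^{*}\in X^{*}$, and the Fenchel--Young contradiction at the end goes through verbatim.  This yields the analogous identity
\[
F^{**}(x^{**},x^{*}) \;=\; \liminf_{(y,y^{*})\to(x^{**},x^{*})} F(y,y^{*}),
\]
where the liminf is now taken over nets in $X\times X^{*}$ converging in $\mathrm{T}_{CLB}(X^{**})\times\|\cdot\|$.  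Repeating the diagonal extraction of the first paragraph in this finer topology, and invoking (i), produces a \emph{bounded} net with the required properties.

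The alternative Marques Alves--Svaiter route instead keeps the $\omega(X^{**},X^{*})\times\|\cdot\|$ setup and enforces boundedness by a direct Goldstine--style truncation: for any weak$^{*}$ neighborhood $U$ of $x^{**}$, Fact~\ref{Goldst:1} supplies a bounded $y_{0}\in U\cap(\|x^{**}\|+1)B_{X}$, which can be combined with the possibly unbounded near-minimizer $y$ from Fact~\ref{NDMSV:2} by a convex combination $\lambda y_{0}+(1-\lambda)y$.  Convexity of $F$ in the first slot then controls the functional value, while a careful choice of $\lambda$ in terms of $\|y\|$ keeps the combination in a fixed ball and inside a shrunken neighborhood of $x^{**}$.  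The main obstacle on this route is the simultaneous balancing of ball size and closeness to $x^{**}$, which forces a delicate diagonal indexing.  I would prefer the Simons route since all boundedness is then absorbed into the choice of topology, leaving only property~(ii)---a standard feature of $CLB$ topologies---as the single substantive technical verification.
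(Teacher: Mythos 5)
Your preferred route is, in outline, the paper's first proof (Simons' argument): pass to the finer topology $\mathrm{T}_{CLB}(X^{**})$ so that norm boundedness comes for free from continuity of $\|\cdot\|^{**}$. Your property (i) is correct and is exactly what the paper extracts from \cite[Lemma~7.1(a)]{Si5}. The gap is that properties (ii) and (iii) are not ``standard features'' one may simply quote: that $\mathrm{T}_{CLB}(X^{**})$ is a locally convex vector topology whose continuous linear functionals are exactly $X^*$, and that $X$ is $\mathrm{T}_{CLB}$-dense in $X^{**}$, constitute the substantive content of the theorem. Property (iii) is itself a Goldstine-type density theorem for a strictly finer topology (it already yields bounded weak$^*$-convergent nets along which every $h^{**}$ converges), and (ii) is deduced from (iii) via the decomposition $X^{***}=X^*\oplus X^{\bot}$; so your plan is close to circular unless (iii) is proved independently --- which is precisely what \cite[Lemma~45.9(a)]{Si2} supplies and what the paper cites rather than re-derives. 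Moreover, rerunning the separation step of Fact~\ref{NDMSV:2} in $\mathrm{T}_{CLB}\times\|\cdot\|\times\|\cdot\|$ requires Hahn--Banach in a locally convex topology, and local convexity of the initial topology generated by the convex functions $h^{**}$ is not automatic. Finally, your extraction with $|F(y_{(U,\epsilon)},y^*_{(U,\epsilon)})-F^{**}(x^{**},x^*)|<\epsilon$ tacitly assumes $F^{**}(x^{**},x^*)$ is finite; the case $F^{**}(x^{**},x^*)=+\infty$ needs (and in the paper receives) a separate, easy treatment via Fact~\ref{Goldst:1} with the second coordinate frozen at $x^*$.

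Your sketch of the Marques Alves--Svaiter alternative is not what they (or the paper) do, and as written it would fail: in the convex combination $\lambda y_0+(1-\lambda)y$ the value $F(y_0,\cdot)$ is completely uncontrolled (typically $+\infty$, since $y_0$ is produced by Goldstine with no reference to $\dom F$), so convexity of $F$ gives no bound at the combination. The actual second proof truncates the \emph{function} rather than the points: it replaces $F$ by $F+\iota_{MB_{X\times X^*}}$ with $M\geq\|(x^{**},x^*)\|$ chosen so that $\dom F\cap\inte MB_{X\times X^*}\neq\varnothing$, verifies $(F+\iota_{MB_{X\times X^*}})^{**}(x^{**},x^*)=F^{**}(x^{**},x^*)$ via the exact inf-convolution formula for the conjugate of a sum, and then applies Fact~\ref{NDMSV:2} to the truncated function, whose domain --- hence any net supplied by the liminf formula --- is automatically bounded. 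That is the clean way to make the diagonal extraction of your first paragraph produce a bounded net, and it avoids the $\mathrm{T}_{CLB}$ machinery entirely.
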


\begin{proof}  The first proof, due to Simons, is very concise but quite abstract.

\noindent \emph{Proof one}:
By \cite[Lemma~45.9(a)]{Si2}, there exists a net $(x_{\alpha}, x^*_{\alpha})_{\alpha\in I}$ in $X\times
X^*$  that converges to $(x^{**}, x^*)$ in the $ \mathrm{T}_{CLB}(X^{**}\times X^{***})$ such that
\begin{align*}
F^{**}(x^{**}, x^*)=\lim F(x_{\alpha}, x^*_{\alpha}).
\end{align*}
By \cite[Lemma~49.1(b)]{Si2} (apply $X^*$ to $H$) (or \cite[Lemma 7.3(b)]{Si5}), $(x_{\alpha}, x^*_{\alpha})_{\alpha\in I}$  converges to $(x^{**}, x^*)$ in the $ \mathrm{T}_{CLB}(X^{**})\times \|\cdot\|$.
Then by \cite[Lemma~7.1(a)]{Si5},
$(x_{\alpha}, x^*_{\alpha})_{\alpha\in I}$ is eventually bounded and
 converges to $x^{**}$ in the weak$^{*}$-topology $\omega (X^{**}, X^*)\times\|\cdot\|$.

The second proof, due to Marques Alvez and Svaiter, is equally concise but more direct.

\noindent \emph{Proof two}:  We consider two cases.\\
\noindent \emph{Case 1}: $(x^{**}, x^*)\in\dom F^{**}$.
Fix  $M>0$  such that $\|(x^{**}, x^*)\|\leq M$ and $\dom F\cap\inte MB_{X\times X^*}\neq\varnothing$, where $B_{X\times X^*}$ is a closed unit ball of $X\times X^*$.
Then as in the proof of  \cite[Eqn. (2.8) of Prop.~1]{Rock702}, \cite[Theorem 4.4.18]{BorVan} or \cite[Lemma~2.3]{MarSva},
\begin{align*}(F+\iota_{MB_{X\times X^*}})^{**}(x^{**}, x^*)&=\left[F^*\Box \iota^*_{MB_{X\times X^*}}\right]^*(x^{**}, x^*)
=(F^{**}+\iota^{**}_{MB_{X\times X^*}})(x^{**}, x^*)\\
&=F^{**}(x^{**}, x^*).
\end{align*}
Finally, we may directly apply  Fact~\ref{NDMSV:2} with $F+\iota_{MB_{X\times X^*}}$ replacing $F$.

\noindent \emph{Case 2}: $(x^{**}, x^*)\notin\dom F^{**}$.  Then $F^{**}(x^{**}, x^*)=+\infty$. By  Goldstine's theorem, there exists a bounded net
$(x_{\alpha})_{\alpha\in I}$ in $X$ such that $(x_{\alpha})_{\alpha\in I}$ converges to $x^{**}$ in the weak$^{*}$-topology $\omega (X^{**}, X^*)$. Take $x^*_{\alpha}=x^*,\forall \alpha\in I$. By  lower semicontinuity of $F^{**}$ and since
$F^{**}=F$ on $X\times X^*$, we have $\lim F(x_{\alpha}, x^*_{\alpha})=+\infty$, and hence we have
$F^{**}(x^{**}, x^*)=\lim F(x_{\alpha}, x^*_{\alpha})$ as asserted.
\end{proof}

\begin{remark}
In Theorem~\ref{NDMSV:3}, the result cannot hold if we select points from
$X^{**}\times X^{***}$.  For example, suppose that $X$ is nonreflexive.  Thus $B_{X^*}\varsubsetneq
B_{X^{***}}$.  Define $F:=\iota_{\{0\}}\oplus \iota_{B_{X^*}}$ on $X\times X^*$.
Thus $F^{**}=\iota_{\{0\}}\oplus \iota_{B_{X^{***}}}$ on $X^{**}\times X^{***}$.
Take $x_0^{***}\in B_{X^{***}}\backslash B_{X^*}$. Thus $F^{**}(0,x_0^{***})=0$.
Suppose that there exist a bounded net $(x_{\alpha}, x^*_{\alpha})_{\alpha\in I}$ in $X\times
X^*$  that converges to $(0, x_0^{***})$ in the weak$^{*}$-topology $\omega (X^{**}, X^*)\times \|\cdot\|$
such that
\begin{align*}
0=F^{**}(0, x_0^{***})=\lim F(x_{\alpha}, x^*_{\alpha})=\lim\big(\iota_{\{0\}}\oplus \iota_{B_{X^*}}\big)(x_{\alpha}, x^*_{\alpha}).
\end{align*}
Thus there exists $\alpha_0\in I$ such that $(x_{\alpha}, x^*_{\alpha})\in\{0\}\times B_{X^*}$ for every $
\alpha\succeq_I\alpha_0$.  Thus $x_0^{***}\in B_{X^*}$, which contradicts that $x_0^{***}\notin B_{X^*}$.
\end{remark}

Similar arguments to  those of the first proof of
Theorem~\ref{NDMSV:3} lead to:

\begin{proposition}\label{FPV:PR4}
Let $F:X\times X^*\rightarrow\RX$ be proper, lower semicontinuous and convex. Let $(x, x^{***})\in
X\times X^{***}$. Then there exists a bounded net $(x_{\alpha}, x^*_{\alpha})_{\alpha\in I}$ in $X\times
X^*$  that converges to $(x, x^{***})$ in the norm $\times$ weak$^{*}$-topology $\omega (X^{***}, X^{**})$
such that
\begin{align*}
F^{**}(x, x^{***})=\lim F(x_{\alpha}, x^*_{\alpha}).
\end{align*}
\end{proposition}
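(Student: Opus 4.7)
The plan is to recycle Simons' first proof of Theorem~\ref{NDMSV:3}, interchanging the roles of the two coordinates. In that proof, the first coordinate of the approximating net lies in $X$ while the target sits in $X^{**}$, and so one only gets a $\mathrm{T}_{CLB}(X^{**})$-convergence that is subsequently upgraded to weak$^*$-convergence; the second coordinate, by contrast, already sits in $X^*$ where the target lives, and so one upgrades $\mathrm{T}_{CLB}(X^{***})$-convergence to norm convergence. For Proposition~\ref{FPV:PR4} the target is $(x,x^{***})\in X\times X^{***}$, so the situation is exactly reversed: the first coordinate should be upgraded to norm convergence and the second coordinate to weak$^*$-convergence in $\omega(X^{***},X^{**})$.

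Accordingly I would proceed in three steps. First, I would apply \cite[Lemma~45.9(a)]{Si2} to the proper, lower semicontinuous, convex function $F$ on $X\times X^*$, viewed as a subspace of $X^{**}\times X^{***}$. This yields a net $(x_\alpha,x^*_\alpha)_{\alpha\in I}$ in $X\times X^*$ converging to $(x,x^{***})$ in the $\mathrm{T}_{CLB}(X^{**}\times X^{***})$-topology and satisfying
\begin{equation*}
F^{**}(x,x^{***})=\lim F(x_\alpha,x^*_\alpha).
\end{equation*}
Second, I would invoke \cite[Lemma~49.1(b)]{Si2} (equivalently \cite[Lemma~7.3(b)]{Si5}) on the first coordinate, now taking $H=X$ rather than $X^*$. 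Since the limit $x$ lies in $X$ itself, this lemma upgrades the first coordinate's convergence from $\mathrm{T}_{CLB}(X^{**})$ to norm convergence in $X$, so the net now converges in $\|\cdot\|\times \mathrm{T}_{CLB}(X^{***})$ to $(x,x^{***})$.

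Third, I would apply \cite[Lemma~7.1(a)]{Si5} on the second coordinate: convergence in $\mathrm{T}_{CLB}(X^{***})$ to a point of $X^{***}$ forces the net $(x^*_\alpha)$ to be eventually bounded in $X^*$ and to converge to $x^{***}$ in the weak$^*$-topology $\omega(X^{***},X^{**})$. Combining the three steps yields a bounded net $(x_\alpha,x^*_\alpha)$ in $X\times X^*$ that converges to $(x,x^{***})$ in $\|\cdot\|\times\omega(X^{***},X^{**})$ and realizes $F^{**}(x,x^{***})$ as a limit of values of $F$.

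The main potential obstacle is bookkeeping: making sure that Simons' lemmas, originally phrased for $X$ and $X^*$, can be re-applied after relabeling so that $X$ plays the role that $X^*$ played in the reference proof, and $X^{***}$ plays the role of $X^{**}$. The statements in \cite{Si2,Si5} are designed to work at this abstract level, so the re-labelling should go through without new ideas; the only delicate point is Case 2 type behaviour when $(x,x^{***})\notin\dom F^{**}$, which is again handled as in Marques Alves--Svaiter's second proof by choosing a bounded net in $X$ weak$^*$-convergent to $x$ (trivially, the constant net $x$) and letting the second coordinate be a bounded net in $X^*$ weak$^*$-convergent to $x^{***}$, which exists by Goldstine's theorem (Fact~\ref{Goldst:1}) applied in $X^{***}$.
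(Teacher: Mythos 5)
Your proposal is correct and is essentially the paper's own argument: the paper gives no separate proof of Proposition~\ref{FPV:PR4}, stating only that ``similar arguments to those of the first proof of Theorem~\ref{NDMSV:3}'' (Simons' route via \cite[Lemma~45.9(a)]{Si2}, \cite[Lemma~49.1(b)]{Si2} and \cite[Lemma~7.1(a)]{Si5}) yield it, and your three steps with the roles of the two coordinates interchanged are precisely that adaptation. The closing remark about a Case 2 analysis is unnecessary here, since the first (Simons) proof needs no case distinction, but it does no harm.
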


We now apply Theorem~\ref{NDMSV:3} to \emph{representative} functions attached to $A$;  more precisely, to the \emph{Fitzpatrick function}:
\begin{align}\label{defFA}F_A(x, x^*):= \sup_{a^* \in Aa}\Big( \langle x,a^*\rangle+\langle a,x^*\rangle-\langle a,a^*\rangle\Big).
\end{align}
Let $Y$ be another real Banach space and  $F:X\times Y\rightarrow\RX$. We define $F^{\intercal}:Y\times X\rightarrow\RX$ by
\begin{align*}F^{\intercal}(y, x):= F(x,y),\quad \forall (x,y)\in X\times Y.
\end{align*}

Three more building blocks follow:

\begin{fact}[Fitzpatrick]
\emph{(See (\cite[Propositions~3.2\&4.1, Theorem~3.4 %and Corollary~3.9
]{Fitz88}.)}
\label{f:Fitz}
Let $Y$ be a real Banach space and $A\colon Y\To Y^*$ be monotone with $\dom A\neq\varnothing$.
Then $F_A$ is proper lower semicontinuous, convex, and $(F_A)^{*\intercal}=F_A=\langle\cdot,\cdot\rangle$ on $\gra A$.
 %Moreover, if $A$ is maximally monotone,
%for every $(x,x^*)\in X\times X^*$, the inequality
%$$\scal{x}{x^*}\leq F_A(x,x^*)$$ is true,
%and equality holds if and only if $(x,x^*)\in\gra A$.
\end{fact}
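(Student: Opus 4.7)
The plan is to read all the assertions directly off the defining supremum
\[
F_A(x,x^*)=\sup_{(a,a^*)\in\gra A}\bigl(\scal{x}{a^*}+\scal{a}{x^*}-\scal{a}{a^*}\bigr),
\]
which is well-defined because $\dom A\neq\varnothing$ makes the indexing set nonempty. I would open by observing that for each fixed $(a,a^*)\in\gra A$ the map $(x,x^*)\mapsto\scal{x}{a^*}+\scal{a}{x^*}-\scal{a}{a^*}$ is norm-continuous and affine on $X\times X^*$, so $F_A$ is a pointwise supremum of continuous affine functions and is therefore both convex and (norm) lower semicontinuous. Properness will drop out of the next step, once $F_A$ is shown to take the finite value $\scal{x}{x^*}$ at any $(x,x^*)\in\gra A$.

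For the identity $F_A(x,x^*)=\scal{x}{x^*}$ on $\gra A$, fix $(x,x^*)\in\gra A$. Monotonicity gives $\scal{x-a}{x^*-a^*}\geq0$ for every $(a,a^*)\in\gra A$, which rearranges to $\scal{x}{a^*}+\scal{a}{x^*}-\scal{a}{a^*}\leq\scal{x}{x^*}$; taking the supremum yields $F_A(x,x^*)\leq\scal{x}{x^*}$. The reverse inequality is immediate by inserting the admissible pair $(a,a^*)=(x,x^*)$ into the defining supremum.

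For the identity $(F_A)^{*\intercal}(x,x^*)=\scal{x}{x^*}$ on $\gra A$, I would expand
\[
F_A^{*}(x^*,x)=\sup_{(y,y^*)\in X\times X^*}\bigl(\scal{y}{x^*}+\scal{x}{y^*}-F_A(y,y^*)\bigr).
\]
The key trick is to plug the fixed $(x,x^*)\in\gra A$ as the test point inside the supremum defining $F_A(y,y^*)$: this produces the uniform lower bound $F_A(y,y^*)\geq\scal{y}{x^*}+\scal{x}{y^*}-\scal{x}{x^*}$ for every $(y,y^*)\in X\times X^*$, which rearranges to $\scal{y}{x^*}+\scal{x}{y^*}-F_A(y,y^*)\leq\scal{x}{x^*}$. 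Taking the supremum over $(y,y^*)$ gives $F_A^{*}(x^*,x)\leq\scal{x}{x^*}$. The matching lower bound comes either from restricting the supremum to $(y,y^*)=(x,x^*)$ and invoking the preceding paragraph, or equivalently from Fenchel--Young applied to $F_A$ and $F_A^{*}$ together with $F_A(x,x^*)=\scal{x}{x^*}$.

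No single step looks hard; the plan is essentially three direct unpackings of the supremum plus monotonicity. The only mildly fiddly point will be the bookkeeping associated with the transpose $(\cdot)^\intercal$ and the canonical embedding $X\hookrightarrow X^{**}$, since $F_A^{*}$ a priori lives on $X^*\times X^{**}$ while $(F_A)^{*\intercal}$ is being evaluated at points of $X\times X^*$; one must read $(F_A)^{*\intercal}(x,x^*)$ as $F_A^{*}(x^*,x)$ with $x$ interpreted via its canonical image in $X^{**}$.
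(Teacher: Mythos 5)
Your argument is correct and is essentially the standard one from Fitzpatrick's original paper, which this survey simply cites without reproducing a proof: supremum of continuous affine functions for convexity and lower semicontinuity, monotonicity for $F_A\leq\scal{\cdot}{\cdot}$ on $\gra A$ with the diagonal term giving equality, and the uniform affine minorant $F_A(y,y^*)\geq\scal{y}{x^*}+\scal{x}{y^*}-\scal{x}{x^*}$ together with Fenchel--Young for the conjugate identity. Your closing remark about reading $(F_A)^{*\intercal}(x,x^*)$ as $F_A^*(x^*,x)$ with $x$ canonically embedded in $X^{**}$ is exactly the right bookkeeping.
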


\begin{fact}[Fitzpatrick]
\emph{(See {\cite[Theorem~3.10]{Fitz88}}.)}
\label{f:Fitzp2}
Let $A\colon X\To X^*$ be  maximally monotone, and let $F:X\times X^*\rightarrow\RX$ be convex.
Assume that $F\geq \langle\cdot,\cdot\rangle$ on $X\times X^*$ and $F=\langle\cdot,\cdot\rangle$ on $\gra A$.
Then $F_A\leq F$.
\end{fact}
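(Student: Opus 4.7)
The plan is to prove the inequality directly from the two hypotheses on $F$ (convexity, minorization by the duality pairing, equality with the pairing on $\gra A$), essentially bypassing the maximality assumption; the latter is ``built into'' the definition of $F_A$ through the supremum over $\gra A$.

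\textbf{The key step.} Fix an arbitrary $(x,x^*)\in X\times X^*$ and an arbitrary $(a,a^*)\in\gra A$. I would form the convex combination $\big((1-t)x+ta,\,(1-t)x^*+ta^*\big)$ for $t\in(0,1)$ and sandwich its value under $F$ between two quantities. From $F\geq\pscal$ on $X\times X^*$ we get
\begin{equation*}
\Scal{(1-t)x+ta}{(1-t)x^*+ta^*}\leq F\big((1-t)x+ta,(1-t)x^*+ta^*\big),
\end{equation*}
and from convexity of $F$ together with $F(a,a^*)=\scal{a}{a^*}$ we get
\begin{equation*}
F\big((1-t)x+ta,(1-t)x^*+ta^*\big)\leq (1-t)F(x,x^*)+t\scal{a}{a^*}.
\end{equation*}

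\textbf{Algebraic simplification and passage to the limit.} Expanding the bilinear pairing on the left yields
\begin{equation*}
(1-t)^2\scal{x}{x^*}+(1-t)t\big(\scal{x}{a^*}+\scal{a}{x^*}\big)+t^2\scal{a}{a^*}\leq(1-t)F(x,x^*)+t\scal{a}{a^*}.
\end{equation*}
Moving $t\scal{a}{a^*}$ across and collecting the common factor $(1-t)$ on the left, then dividing by $1-t>0$, I obtain
\begin{equation*}
(1-t)\scal{x}{x^*}+t\big(\scal{x}{a^*}+\scal{a}{x^*}\big)-t\scal{a}{a^*}\leq F(x,x^*).
\end{equation*}
Letting $t\to 1^-$ gives the crucial estimate
\begin{equation*}
\scal{x}{a^*}+\scal{a}{x^*}-\scal{a}{a^*}\leq F(x,x^*).
\end{equation*}
Since $(a,a^*)\in\gra A$ was arbitrary, taking the supremum over $\gra A$ in the left-hand side produces $F_A(x,x^*)\leq F(x,x^*)$, as required.

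\textbf{Remarks on obstacles.} There is essentially no analytic obstacle: the argument is purely a short computation with a two-parameter convex combination, using only convexity and the two pointwise hypotheses on $F$. The maximal monotonicity of $A$ is not required for this direction, though it is what justifies working with $F_A$ as a genuine representative function (via Fact~\ref{f:Fitz}). The only mild subtlety is writing the sandwich so that the ``quadratic-in-$t$'' pairing cancels properly; choosing to simplify by isolating a factor of $(1-t)$ (rather than $t$) is what makes the $t\to 1^-$ limit extract the Fitzpatrick integrand $\scal{x}{a^*}+\scal{a}{x^*}-\scal{a}{a^*}$ cleanly.
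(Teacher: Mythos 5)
Your argument is correct and is essentially the classical proof of this result: the paper itself gives no proof, citing Fitzpatrick's Theorem~3.10, and the convex-combination/sandwich computation you carry out (bounding $F$ at $\bigl((1-t)x+ta,(1-t)x^*+ta^*\bigr)$ below by the pairing and above by $(1-t)F(x,x^*)+t\scal{a}{a^*}$, cancelling the factor $1-t$, and letting $t\to1^-$) is exactly the standard route. Your side remark is also accurate: maximal monotonicity plays no role here, and the same argument shows $F_A\leq F$ whenever $\gra A$ is merely a nonempty set on which $F$ agrees with the pairing; the only degenerate case, $F(x,x^*)=+\infty$, makes the conclusion trivial, so nothing is lost.
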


\begin{fact} [Simons]\emph{(See \cite[Theorem~19(c)]{SiNI}.)}\label{retyD:1}
Let $A:X\rightrightarrows X^*$ be  maximally monotone of type (NI). Then
the operator $B: X^*\rightrightarrows X^{**}$ defined by
\begin{align}
\gra B:=\big\{(x^*, x^{**})\in X^*\times X^{**}\mid \langle x^*-a^*, x^{**}-a\rangle\geq0,\,\forall (a,a^*)\in\gra A\big\}\label{EretyD:1}
\end{align}
is maximally monotone.
\end{fact}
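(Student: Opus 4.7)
The plan is to decompose the claim into (a) monotonicity of $B$, which consumes the type (NI) hypothesis, and (b) maximality of $B$, which follows almost tautologically from the definition combined with the monotonicity of $A$. The key device is a convex ``bidual Fitzpatrick function'' that sandwiches the duality pairing exactly on $\gra B$.

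Concretely, I would first rewrite the defining inequality of $B$ by introducing
\begin{align*}
\widehat F(x^{**}, x^*) := \sup_{(a,a^*)\in \gra A}\bigl[\langle a, x^*\rangle + \langle a^*, x^{**}\rangle - \langle a, a^*\rangle\bigr],
\end{align*}
the natural extension of the Fitzpatrick function $F_A$ to $X^{**}\times X^*$; it is convex as a supremum of continuous affine maps. Expanding the condition $\langle x^*-a^*, x^{**}-a\rangle \geq 0$ shows that $(x^*, x^{**})\in\gra B$ iff $\widehat F(x^{**}, x^*)\leq\langle x^{**}, x^*\rangle$, while type (NI) is precisely $\widehat F(x^{**}, x^*)\geq\langle x^{**}, x^*\rangle$ for every $(x^{**}, x^*)\in X^{**}\times X^*$. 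Hence $\gra B = \{(x^*, x^{**}) : \widehat F(x^{**}, x^*) = \langle x^{**}, x^*\rangle\}$.

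For monotonicity, I would take $(x^*, x^{**}), (y^*, y^{**})\in \gra B$ and $\lambda\in(0,1)$. Convexity of $\widehat F$ together with the endpoint equalities yields
\begin{align*}
\widehat F\bigl(\lambda x^{**}+(1-\lambda)y^{**},\,\lambda x^*+(1-\lambda)y^*\bigr) \leq \lambda\langle x^{**},x^*\rangle + (1-\lambda)\langle y^{**},y^*\rangle.
\end{align*}
Combining this with the (NI) lower bound at the same convex combination and expanding the duality pairing bilinearly isolates the cross term $\lambda(1-\lambda)\langle x^*-y^*,\,x^{**}-y^{**}\rangle \geq 0$, which proves monotonicity. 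For maximality, I would observe that the flip $(a, a^*)\mapsto(a^*, a)$ embeds $\gra A$ into $\gra B$, since membership of $(a^*, a)$ in $\gra B$ is just the statement of monotonicity of $A$ against all $(b, b^*) \in \gra A$; therefore any $(u^*, u^{**})$ monotonically related to $\gra B$ is in particular monotonically related to this flipped copy of $\gra A$, which is precisely the condition defining membership in $\gra B$.

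The main obstacle, as I see it, is the monotonicity step. It is not a priori clear that the set cut out by ``being monotonically related to $\gra A$'' in the asymmetric product $X^*\times X^{**}$ should itself be a monotone subset, because $\gra A$ sits only in $X\times X^*$ and the relevant pairing is between two distinct spaces. Type (NI) is precisely what corrects this asymmetry: the two-sided sandwich $\langle\cdot,\cdot\rangle\leq \widehat F$ everywhere and $\widehat F \leq \langle\cdot,\cdot\rangle$ on $\gra B$, converted into pairwise monotonicity by the convexity averaging argument above, is the crux of the proof.
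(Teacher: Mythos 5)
Your argument is correct. Note, however, that the paper does not prove this statement at all: it is quoted as a Fact, with the proof delegated to Simons' 1996 paper \cite[Theorem~19(c)]{SiNI}, where the original argument runs through rather different (minimax/convexification) machinery. What you give instead is the now-standard ``representable/positive set'' proof: your $\widehat F$ is exactly the conjugate $F^*$ of $F=\langle\cdot,\cdot\rangle+\iota_{\gra A}$ that the paper itself computes in the proof of Proposition~\ref{retyD:2} (equation \eqref{retyD:2Eaa1}), where it is observed that (NI) gives $F^*\geq\langle\cdot,\cdot\rangle$ everywhere while $F^*\leq\langle\cdot,\cdot\rangle$ on $\gra B$ --- but the paper uses those two inequalities only to relate $F^*$ to $F_B$, taking the maximal monotonicity of $B$ as an input, whereas you push the same sandwich one step further. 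Your three steps all check out: the identification $\gra B=\{(x^*,x^{**}):\widehat F(x^{**},x^*)=\langle x^{**},x^*\rangle\}$ is a direct expansion of the pairing; the convexity-averaging computation (divide by $\lambda(1-\lambda)$ after expanding $\langle\lambda x^{**}+(1-\lambda)y^{**},\lambda x^*+(1-\lambda)y^*\rangle$ bilinearly) does yield $\langle x^*-y^*,x^{**}-y^{**}\rangle\geq 0$; and the maximality step is indeed tautological once one notes that the canonically flipped copy $\{(a^*,\widehat a):(a,a^*)\in\gra A\}$ sits inside $\gra B$ by monotonicity of $A$, so that being monotonically related to $\gra B$ already forces the defining inequality of $\gra B$. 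The payoff of your route is a short, self-contained proof that isolates exactly where (NI) is used (only for the monotonicity of $B$, via the lower half of the sandwich); the cost is nil, since the convex function you need is already present in the paper's surrounding argument.
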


We can now establish an important link between $A$ and $B$:

\begin{proposition} \label{retyD:2}
Let $A:X\rightrightarrows X^*$ be  maximally monotone of type (NI), and let
the operator $B: X^*\rightrightarrows X^{**}$ be  defined as in  \eqref{EretyD:1}.
Then $(F_B)^{\intercal}\leq(F_A)^{**}\leq (F_B)^{*}$ on $X^{**}\times X^{*}$.
In consequence, $(F_A)^{**}=\langle\cdot,\cdot\rangle$ on $\gra B$.
\end{proposition}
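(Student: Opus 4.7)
The plan is to reduce both inequalities in the sandwich to the single identity
\begin{equation*}
F_A^*(b^*,b^{**})=\langle b^*,b^{**}\rangle\quad\text{for all }(b^*,b^{**})\in\gra B,
\end{equation*}
and then deduce the sandwich via Fact~\ref{f:Fitzp2} (applied to the maximally monotone $B$) together with one Fenchel--Young step. The inequality $F_A^*\geq\langle\cdot,\cdot\rangle$ on $X^*\times X^{**}$ is the easy half: plugging $(x,x^*)=(a,a^*)\in\gra A$ into the defining supremum of $F_A^*$ gives $F_A^*(y^*,y^{**})\geq\tilde F_A(y^{**},y^*)$, where $\tilde F_A(x^{**},x^*):=\sup_{(a,a^*)\in\gra A}(\langle x^{**},a^*\rangle+\langle a,x^*\rangle-\langle a,a^*\rangle)$; a direct rearrangement identifies (NI) with the statement $\tilde F_A\geq\langle\cdot,\cdot\rangle$ on $X^{**}\times X^*$.

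The heart of the argument will be the reverse inequality $F_A^*(b^*,b^{**})\leq\langle b^*,b^{**}\rangle$ for $(b^*,b^{**})\in\gra B$. Fixing such $(b^*,b^{**})$ and any $(x,x^*)\in X\times X^*$, I would first verify the algebraic identity
\begin{align*}
\langle x,a^*\rangle+\langle a,x^*\rangle-\langle a,a^*\rangle-\bigl(\langle x,b^*\rangle+\langle x^*,b^{**}\rangle-\langle b^*,b^{**}\rangle\bigr)=M(a,a^*)-\langle a-b^{**},a^*-b^*\rangle,
\end{align*}
where $M(a,a^*):=\langle a-b^{**},x^*-b^*\rangle+\langle x-b^{**},a^*-b^*\rangle$; the claim then reduces to $\sup_{(a,a^*)\in\gra A}[M(a,a^*)-\langle a-b^{**},a^*-b^*\rangle]\geq 0$. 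To achieve this I would apply (NI) at the perturbed point $z_\lambda:=(b^{**}+\lambda(x-b^{**}),\,b^*+\lambda(x^*-b^*))\in X^{**}\times X^*$ for $\lambda\in(0,1)$. Expanding $\langle a-z_\lambda^{**},a^*-z_\lambda^*\rangle$ as a quadratic in $\lambda$ produces
\begin{align*}
\inf_{(a,a^*)\in\gra A}\bigl[\langle a-b^{**},a^*-b^*\rangle-\lambda M(a,a^*)+\lambda^2c\bigr]\leq 0,\qquad c:=\langle x-b^{**},x^*-b^*\rangle.
\end{align*}
For every $\epsilon>0$ some $(a,a^*)$ realizes the bracket within $\epsilon$; dividing by $\lambda$ and using $1/\lambda>1$ together with $\langle a-b^{**},a^*-b^*\rangle\geq 0$ (the $\gra B$ condition) yields $M(a,a^*)-\langle a-b^{**},a^*-b^*\rangle\geq\lambda c-\epsilon/\lambda$. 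When $c\geq 0$ I would let $\lambda\to 1^-$ and $\epsilon\to 0^+$; when $c<0$ I would take $\lambda:=\sqrt{\epsilon/|c|}$ (which lies in $(0,1)$ once $\epsilon<|c|$), making the lower bound $-2\sqrt{\epsilon|c|}$ tend to $0$. Either way the supremum is nonnegative, finishing the key lemma.

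With the identity in hand, the inequality $(F_A)^{**}\leq(F_B)^*$ on $X^{**}\times X^*$ is equivalent, by Fenchel conjugation on $X^*\times X^{**}$, to $F_B\leq F_A^*$; this follows from Fact~\ref{f:Fitzp2} applied to $B$ with $F=F_A^*$, which is convex, $\geq\langle\cdot,\cdot\rangle$, and equal to $\langle\cdot,\cdot\rangle$ on $\gra B$. For the first inequality, Fenchel--Young gives $F_A^{**}(x^{**},x^*)+F_A^*(b^*,b^{**})\geq\langle b^*,x^{**}\rangle+\langle b^{**},x^*\rangle$; substituting $F_A^*(b^*,b^{**})=\langle b^*,b^{**}\rangle$ and taking the supremum over $(b^*,b^{**})\in\gra B$ yields $F_A^{**}\geq F_B^{\intercal}$. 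The ``in consequence'' is then immediate: Fact~\ref{f:Fitz} applied to $B$ gives $F_B^{\intercal}=F_B^*=\langle\cdot,\cdot\rangle$ on $\gra B$, so the sandwich collapses to $(F_A)^{**}=\langle\cdot,\cdot\rangle$ there. The hard part will be the $c<0$ case of the key lemma: (NI) by itself only delivers $\inf\leq 0$ at a single point, so the trick is to encode the bilinear quantity $M$ as the first-order coefficient of a $\lambda$-parametrized quadratic inequality coming from (NI) at $z_\lambda$, and balance the optimal $\lambda$ against the NI tolerance $\epsilon$.
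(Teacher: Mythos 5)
Your proof is correct, but it takes a genuinely different route from the paper's, and in one place a harder one. The paper never needs to evaluate $(F_A)^*$ on all of $\gra B$: it introduces $F:=\langle\cdot,\cdot\rangle+\iota_{\gra A}$, computes $F^*(x^*,x^{**})=\langle x^*,x^{**}\rangle-\inf_{(a,a^*)\in\gra A}\langle x^{**}-a,x^*-a^*\rangle$, and reads off $F^*=\langle\cdot,\cdot\rangle$ on $\gra B$ \emph{immediately} (the infimum is $\leq 0$ by (NI) and $\geq 0$ by the definition of $\gra B$); the upper half of the sandwich then comes from $F_B\leq F^*\leq (F_A)^*$ via Fact~\ref{f:Fitzp2} and $F_A\leq F$, and the lower half from conjugating the chain $\langle\cdot,\cdot\rangle+\iota_{\gra A^{-1}}\geq (F_A)^*\geq F_B$, so the only pointwise evaluation of $(F_A)^*$ it needs is on $\gra A^{-1}$, where Fact~\ref{f:Fitz} supplies it for free. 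You instead prove the strictly stronger identity $(F_A)^*=\langle\cdot,\cdot\rangle$ on all of $\gra B$ (note $(F_A)^*\geq F^*$, so this does not follow from the paper's computation), and this genuinely requires your $\lambda$-perturbation of (NI) along the segment from $(b^{**},b^*)$ to $(x,x^*)$ with the $\epsilon$--$\lambda$ balancing in the $c<0$ case; I checked the algebraic identity, the quadratic expansion, the use of $\langle a-b^{**},a^*-b^*\rangle\geq 0$ together with $1/\lambda>1$ to discard the leftover term, and the two limiting regimes, and they are all sound (the only cosmetic slip is calling the passage from $F_B\leq (F_A)^*$ to $(F_A)^{**}\leq (F_B)^*$ an ``equivalence''; only that implication is needed and it holds since conjugation is order-reversing). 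What your route buys is the stronger intermediate fact---essentially the Marques Alves--Svaiter-type statement that the conjugate Fitzpatrick function of an (NI) operator is tight on the Gossez extension $\gra B$---at the cost of one hard lemma that the paper engineers its way around with soft conjugation; the endgame (Fact~\ref{f:Fitzp2} for the upper bound, Fenchel--Young plus Fact~\ref{f:Fitz} applied to $B$ for the lower bound and the ``in consequence'') is essentially the same in both arguments.
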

\begin{proof}
Define $F:X\times X^*\rightarrow\RX$ by
\begin{align*}
(y,y^*)\mapsto\langle y,y^*\rangle+\iota_{\gra A}(y,y^*).
\end{align*}
We have
\begin{align}
F^*(x^*, x^{**})&=\sup_{(a,a^*)\in\gra A}\big\{ \langle x^*,a\rangle+\langle x^{**}, a^*\rangle-\langle a,a^*\rangle     \big\}\nonumber\\
&=\langle x^*, x^{**}\rangle-\inf_{(a,a^*)\in\gra A}\langle x^{**}-a, x^*-a^*\rangle,\quad \forall (x^*, x^{**})\in X^*\times X^{**}.\label{retyD:2Eaa1}
\end{align}
Since $A$ is of type (NI), $\inf_{(a,a^*)\in\gra A}\langle x^{**}-a, x^*-a^*\rangle\leq0$ and  so \eqref{retyD:2Eaa1} shows
\begin{align}
F^*(x^*, x^{**})\geq \langle  x^*, x^{**}\rangle,\quad\forall(x^*, x^{**})
 \in X^*\times X^{**}.\label{retyD:2Eaa2}
 \end{align}
 Now when $(x^*, x^{**})\in\gra B$, then $\inf_{(a,a^*)\in\gra A}\langle x^{**}-a, x^*-a^*\rangle\geq0$. Hence,
 $F^*(x^*, x^{**})\leq \langle  x^*, x^{**}\rangle$ by \eqref{retyD:2Eaa1} again. Combining this with \eqref{retyD:2Eaa2}, leads to
 $F^*(x^*, x^{**})= \langle  x^*, x^{**}\rangle$.
 Thus, $F^*\geq \langle\cdot,\cdot\rangle$ on $X^*\times X^{**}$ and $F^*=\langle\cdot,\cdot\rangle$ on $\gra B$.
 Thence,
 Fact~\ref{f:Fitzp2} and Fact~\ref{retyD:1} imply that \begin{align}
 F^*\geq F_B.\label{retyD:2Ea1}
 \end{align}

As $F_A=F$ on $\dom F=\gra A$, by Fact~\ref{f:Fitzp2} we have $F_A\leq F$ and so $(F_A)^*\geq F^*$. Hence, by \eqref{retyD:2Ea1},
\begin{align}(F_A)^*\geq F_B.\label{retyD:2Eb1}
\end{align}
By Fact~\ref{f:Fitz}, $(F_A)^*=\langle\cdot,\cdot\rangle$ on $\gra A^{-1}$ and so we have
$H:=\langle \cdot,\cdot\rangle+\iota_{\gra A^{-1}}\geq (F_A)^*$ on $X^{*}\times X^{**}$.
Thus by \eqref{retyD:2Eb1},
\begin{align}H \geq (F_A)^*\geq F_B\quad\text{on}\,X^{*}\times X^{**}.
\label{reD:2c1}
 \end{align}
 Fix $(x^*, x^{**})
 \in X^*\times X^{**}$.
 On conjugating we obtain $F^*(x^{*}, x^{**})=H^*(x^{**}, x^*)\leq(F_A)^{**}(x^{**}, x^*)\leq (F_B)^*(x^{**}, x^{*})$ and so by
\eqref{retyD:2Ea1},
\begin{align}F_B(x^{*}, x^{**})\leq
(F_A)^{**}(x^{**}, x^*)\leq (F_B)^*(x^{**}, x^{*})
\end{align}

To conclude the proof, if $(x^{*}, x^{**})\in\gra B$, then
 by Fact~\ref{f:Fitz} (applied in $Y=X^*$ on appealing to Fact~\ref{retyD:1})
  $F_B(x^{*}, x^{**})=\langle x^{*}, x^{**}\rangle=(F_B)^*(x^{**}, x^{*})$.
  Then $(F_A)^{**}(x^{**}, x^*)=\langle x^{**}, x^*\rangle$
  and
 as asserted
 $(F_A)^{**}=\langle\cdot,\cdot\rangle$ on $\gra B$.
\end{proof}

\begin{fact}[Simons / Voisei and Z{\u{a}}linescu]\emph{(See
\cite[Theorem 2.6]{VZ:1} and \cite[Theorem~4.4]{MarSva}, \cite[Theorem~6.12(a) and Remark~5.13]{Si5}
 or \cite[Theorem 9.7.2]{BorVan}.)} \label{StFiz} Let $A:X \To X^*$
be a maximally  monotone operator that is of type (NI).
 Then
 \begin{align*}
 F_A(x,x^*)-\langle x,x^*\rangle &\ge
 \tfrac{1}{2}\inf_{(a,a^*)\in\gra A}\{\|x-a\|^2+ \|x^*-a^*\|^2\} ,
 \end{align*}
 and hence $ F_A(x,x^*)-\langle x,x^*\rangle\ge
 \tfrac{1}{4}\inf_{(a,a^*)\in\gra A}\{\|x-a\|^2+\|x^*-a^*\|^2\}$.
\end{fact}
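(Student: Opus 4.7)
The plan is to derive the estimate as a direct corollary of the Br{\o}nsted--Rockafellar property already established in Fact~\ref{MAS:BR1}, using only the Fitzpatrick identity
\begin{equation*}
F_A(x,x^*)-\langle x,x^*\rangle \;=\; -\inf_{(a,a^*)\in\gra A}\langle x-a,x^*-a^*\rangle,
\end{equation*}
which follows from expanding \eqref{defFA}. Set $d^2:=\inf_{(a,a^*)\in\gra A}\{\|x-a\|^2+\|x^*-a^*\|^2\}$. If $d=0$ the inequality is trivial (the right-hand side vanishes and the gap is nonnegative by Fact~\ref{f:Fitz}), so I assume $d>0$.

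Next I fix $\alpha,\beta>0$ with $\alpha^2+\beta^2<d^2$ and observe that no $(b,b^*)\in\gra A$ can simultaneously satisfy $\|x-b\|<\alpha$ and $\|x^*-b^*\|<\beta$: otherwise $\|x-b\|^2+\|x^*-b^*\|^2<\alpha^2+\beta^2<d^2$, contradicting the definition of $d$. Since $A$ is of type (NI), Fact~\ref{MAS:BR1} tells us $A$ is of type (BR). Applying the contrapositive of the (BR) condition to the point $(x,x^*)$ yields
\begin{equation*}
\inf_{(a,a^*)\in\gra A}\langle x-a,x^*-a^*\rangle \;\leq\; -\alpha\beta,
\end{equation*}
which via the Fitzpatrick identity translates to $F_A(x,x^*)-\langle x,x^*\rangle \geq \alpha\beta$.

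To finish, I take $\alpha_k=\beta_k=\tfrac{d}{\sqrt{2}}\sqrt{1-1/k}$ for $k\in\NN$, so that $\alpha_k^2+\beta_k^2=d^2(1-1/k)<d^2$ and $\alpha_k\beta_k=\tfrac{1}{2}d^2(1-1/k)\to \tfrac{1}{2}d^2$. This produces the main inequality $F_A(x,x^*)-\langle x,x^*\rangle \geq \tfrac{1}{2}d^2$; the weaker estimate with constant $\tfrac{1}{4}$ is then immediate.

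The main obstacle in this approach is conceptual rather than technical: the whole argument rests on Fact~\ref{MAS:BR1}, whose proof (Marques Alves and Svaiter) is the deep step. If one wanted to bypass that dependency and give a self-contained proof using only the material of Section~\ref{s:Thrmon}, the hard part would be extracting the quadratic lower bound directly from the bidual identity $(F_A)^{**}=\langle\cdot,\cdot\rangle$ on $\gra B$ (Proposition~\ref{retyD:2}); this would require applying Fenchel--Young to a quadratic regularization of $F_A$ in $X\times X^*$ and then passing back from the bidual extension to $\gra A$ itself via Goldstine's theorem (Fact~\ref{Goldst:1}) and the bounded-net approximation Theorem~\ref{NDMSV:3}, while maintaining control on the product-norm distance.
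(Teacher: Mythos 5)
The paper offers no proof of Fact~\ref{StFiz}: it is stated as a Fact and attributed to Simons and to Voisei--Z\u{a}linescu, so there is nothing internal to compare against line by line. Your deduction itself is correct: the identity $F_A(x,x^*)-\langle x,x^*\rangle=-\inf_{(a,a^*)\in\gra A}\langle x-a,x^*-a^*\rangle$ is immediate from \eqref{defFA}, the observation that an open box of half-widths $\alpha,\beta$ with $\alpha^2+\beta^2<d^2$ contains no point of $\gra A$ is sound, the contrapositive of the (BR) condition then yields the gap bound $\alpha\beta$, and optimizing $\alpha\beta$ over $\alpha^2+\beta^2<d^2$ (AM--GM, attained in the limit $\alpha=\beta\to d/\sqrt{2}$) gives exactly the constant $\tfrac12$. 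One small repair: in the case $d=0$ the nonnegativity of the gap is not literally what Fact~\ref{f:Fitz} states (that Fact only gives $F_A=\langle\cdot,\cdot\rangle$ on $\gra A$); you should instead note that if the gap were negative then $(x,x^*)$ would be monotonically related to $\gra A$, hence in $\gra A$ by maximality, forcing the gap to be $0$ --- a one-line fix using the same identity you already wrote down.

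The substantive concern is the direction of dependency, which you partly acknowledge in your closing paragraph. Within this paper's own architecture, Fact~\ref{StFiz} is used to prove Theorem~\ref{retyD:3} ((NI)$\Rightarrow$(D)), and Theorem~\ref{retyD:3} is invoked in the very statement of Fact~\ref{MAS:BR1}; in the source literature the quadratic minorant of the Fitzpatrick gap is likewise the tool normally used \emph{on the way to} establishing the Br{\o}nsted--Rockafellar property, not a consequence of it. Your argument is therefore only non-circular if the external proof of (NI)$\Rightarrow$(BR) in \cite{MarSva2,MarSva3} (which proceeds via an Ekeland-type variational argument for BC--functions rather than via this estimate) is taken as the black box --- a point you must verify and state explicitly, since otherwise you have proved the estimate from a theorem whose standard proof already contains it. A self-contained route, as you suggest, would extract the bound directly from the (NI) inequality $(F_A)^*\geq\langle\cdot,\cdot\rangle$ on $X^*\times X^{**}$ by Fenchel--Young applied to a quadratic perturbation; that is the argument of Simons and of Voisei--Z\u{a}linescu that the paper actually cites, and it is the one to use if you want the Fact to sit correctly upstream of Theorem~\ref{retyD:3}.
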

The first inequality was first established by Simons, and the second inequality was first established by
Voisei and Z{\u{a}}linescu.

The proof of Theorem~\ref{retyD:3} now follows the lines of that of \cite[Theorem~6.15~(c)$\Rightarrow$(a)]{Si5}.

\begin{theorem}[Marques Alves and Svaiter] \emph{(See
\cite[Theorem~4.4]{MarSva} or \cite[Theorem~9.5]{Si5}.)}\label{retyD:3}
Let $A:X\rightrightarrows X^*$ be  maximally monotone. Assume that $A$ is of type (NI).
Then
$A$ is of type (D).
\end{theorem}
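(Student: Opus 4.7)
The plan is to verify the defining condition of type (D) by combining three ingredients already assembled: the second-conjugate representation of $F_A$ from Proposition~\ref{retyD:2}, the approximation theorem for $F_A^{**}$ by bounded weak${}^{*}\times\|\cdot\|$ nets (Theorem~\ref{NDMSV:3}), and the quantitative estimate from Fact~\ref{StFiz}.

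First I would fix $(x^{**},x^*)\in X^{**}\times X^*$ with $\inf_{(a,a^*)\in\gra A}\langle a-x^{**},a^*-x^*\rangle\geq 0$. Together with the (NI) assumption this forces the infimum to be $0$, so that $(x^*,x^{**})$ lies in $\gra B$ for the operator $B$ of Fact~\ref{retyD:1}. Proposition~\ref{retyD:2} then yields the key equality
\[
(F_A)^{**}(x^{**},x^*)=\langle x^{**},x^*\rangle .
\]
Since $F_A$ is proper, lower semicontinuous, and convex (Fact~\ref{f:Fitz}), Theorem~\ref{NDMSV:3} produces a bounded net $(x_{\alpha},x^*_{\alpha})_{\alpha\in I}$ in $X\times X^*$ with $(x_{\alpha},x^*_{\alpha})\weakstarlynorm (x^{**},x^*)$ and
\[
\lim_{\alpha} F_A(x_{\alpha},x^*_{\alpha})=(F_A)^{**}(x^{**},x^*)=\langle x^{**},x^*\rangle .
\]
Because the net is bounded, $x_{\alpha}\weaklys x^{**}$ implies $\langle x_{\alpha},x^*\rangle\to\langle x^{**},x^*\rangle$, and $x_{\alpha}^*\to x^*$ in norm while $(x_{\alpha})$ is bounded gives $\langle x_{\alpha},x^*_{\alpha}-x^*\rangle\to 0$; adding these shows $\langle x_{\alpha},x^*_{\alpha}\rangle\to\langle x^{**},x^*\rangle$.

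Next I would quantitatively approximate by points in $\gra A$. By Fact~\ref{StFiz},
\[
\tfrac{1}{4}\inf_{(a,a^*)\in\gra A}\bigl\{\|x_{\alpha}-a\|^2+\|x^*_{\alpha}-a^*\|^2\bigr\}\le F_A(x_{\alpha},x^*_{\alpha})-\langle x_{\alpha},x^*_{\alpha}\rangle .
\]
The right-hand side tends to $0$ by the previous paragraph, so for each $\alpha$ I can select $(a_{\alpha},a^*_{\alpha})\in\gra A$ with $\|x_{\alpha}-a_{\alpha}\|^2+\|x^*_{\alpha}-a^*_{\alpha}\|^2\le 4\bigl(F_A(x_{\alpha},x^*_{\alpha})-\langle x_{\alpha},x^*_{\alpha}\rangle\bigr)+\tfrac{1}{\alpha}$ (indexing by a cofinal refinement if needed), whence $\|x_{\alpha}-a_{\alpha}\|\to 0$ and $\|x^*_{\alpha}-a^*_{\alpha}\|\to 0$.

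Finally I would assemble convergence and boundedness of $(a_{\alpha},a^*_{\alpha})$. Norm convergence of $x_{\alpha}-a_{\alpha}$ to $0$ implies weak${}^{*}$ convergence to $0$ in $X^{**}$, so $a_{\alpha}\weaklys x^{**}$; similarly $a^*_{\alpha}\to x^*$ in norm. Boundedness of $(a_{\alpha})$ in $X$ follows from boundedness of $(x_{\alpha})$ together with $\|x_{\alpha}-a_{\alpha}\|\to 0$; boundedness of $(a^*_{\alpha})$ follows from norm convergence of $(x^*_{\alpha})$. This is exactly the conclusion required in Definition~\ref{def1}(i), so $A$ is of type (D). The main potential obstacle is purely bookkeeping: making sure the net of approximate minimizers $(a_{\alpha},a^*_{\alpha})$ inherits boundedness and the correct convergence modes from $(x_{\alpha},x^*_{\alpha})$—everything else is a direct chaining of Proposition~\ref{retyD:2}, Theorem~\ref{NDMSV:3}, and Fact~\ref{StFiz}.
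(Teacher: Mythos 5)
Your proposal is correct and follows essentially the same route as the paper: invoking Proposition~\ref{retyD:2} to get $(F_A)^{**}(x^{**},x^*)=\langle x^{**},x^*\rangle$, extracting a bounded approximating net via Theorem~\ref{NDMSV:3}, and then using Fact~\ref{StFiz} to perturb that net into $\gra A$. The only differences are cosmetic (your explicit verification that $\langle x_{\alpha},x^*_{\alpha}\rangle\to\langle x^{**},x^*\rangle$ and the $\varepsilon$-selection of approximate minimizers, which the paper compresses into the bound $\|x_{\alpha}-a_{\alpha}\|\leq 2\sqrt{\varepsilon_{\alpha}}$).
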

\begin{proof}
Let $(x^{**}, x^*)\in X^{**}\times X^*$ be monotonically
related to $\gra A$, and
let $B$ be defined by \eqref{EretyD:1} of Fact~\ref{retyD:1}. Then $(x^*, x^{**})\in \gra  B$.
Thus by Proposition~\ref{retyD:2}, $(F_A)^{**}(x^{**}, x^*)=\langle x^{**}, x^*\rangle$.
By Theorem~\ref{NDMSV:3},
 there exists a bounded net $(x_{\alpha}, x^*_{\alpha})_{\alpha\in I}$ in $X\times
X^*$  that converges to $(x^{**}, x^*)$ in the weak$^{*}$-topology $\omega (X^{**}, X^*)\times \|\cdot\|$
such that
\begin{align*}
\langle x^{**}, x^*\rangle=(F_A)^{**}(x^{**}, x^*)=\lim F_A (x_{\alpha}, x^*_{\alpha}).
\end{align*}
By Fact~\ref{f:Fitz},
\begin{align*}
\langle x^{**}, x^*\rangle=\lim F_A (x_{\alpha}, x^*_{\alpha})\geq \lim\langle x_{\alpha}, x^*_{\alpha}\rangle=\langle x^{**}, x^*\rangle.
\end{align*}
Then we have
$0\leq\varepsilon_{\alpha}:=F_A (x_{\alpha}, x^*_{\alpha})-\langle x_{\alpha}, x^*_{\alpha}\rangle\rightarrow0$.
By Fact~\ref{StFiz},  there exists a net $(a_{\alpha}, a^*_{\alpha})_{\alpha\in I}$ in $\gra A$ with
\begin{align*}
 \|a_{\alpha}-x_{\alpha}\|\leq 2\sqrt{\varepsilon_{\alpha}}\quad\text{and}\quad
\|a^*_{\alpha}-x^*_{\alpha}\|\leq 2\sqrt{\varepsilon_{\alpha}}.
\end{align*}
So,  $(a_{\alpha}, a^*_{\alpha})_{\alpha\in I}$  is eventually bounded and converges to $(x^{**}, x^*)$ in the topology $\omega (X^{**}, X^*)\times \|\cdot\|$ as required.
\end{proof}

\subsection{Type (FP) implies type (NI)}

This section  relies on work in \cite{BBWY2} by Bauschke, Borwein, Wang and Yao.

\begin{fact}[Simons]
\emph{(See \cite[Theorem~17]{Si4} or \cite[Theorem~37.1]{Si2}.)}
\label{FTSim:1}
Let $A:X\To X^*$ be maximally monotone
and of type (D). Then $A$ is of type (FP).
\end{fact}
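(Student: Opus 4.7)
The plan is to reduce the implication to the (NI) formulation via Theorem~\ref{retyD:3} and argue by contradiction. Assume $(x,x^*)\in X\times X^*$ satisfies the (FP) hypothesis --- $U\subseteq X^*$ open convex with $U\cap\ran A\neq\varnothing$, $x^*\in U$, and $(x,x^*)$ monotonically related to $\gra A\cap(X\times U)$ --- but $(x,x^*)\notin\gra A$. By the maximal monotonicity of $A$, there must exist $(\bar a,\bar a^*)\in\gra A$ with $\langle x-\bar a,x^*-\bar a^*\rangle<0$, and the hypothesis forces $\bar a^*\notin U$.

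Since $U$ is open convex in $X^*$ and $\bar a^*\notin U$, Hahn-Banach separation produces $\xi\in X^{**}\setminus\{0\}$ and $c\in\RR$ with $\langle\xi,\bar a^*\rangle\leq c<\langle\xi,u\rangle$ for every $u\in U$; in particular $\eta:=\langle\xi,x^*\rangle-c>0$. The main step is to invoke the (NI) characterization at the lifted point $(x+t\xi,x^*)\in X^{**}\times X^*$ for $t>0$ large:
\[
\inf_{(a,a^*)\in\gra A}\bigl[\langle x-a,x^*-a^*\rangle + t\langle\xi,x^*-a^*\rangle\bigr]\leq 0,
\]
producing a near-infimizing sequence $(a_n,a_n^*)\in\gra A$. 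I split on whether $a_n^*\in U$. When $a_n^*\notin U$, the separation yields $\langle\xi,x^*-a_n^*\rangle\geq\eta$; combined with the per-element lower bound $\langle x-a_n,x^*-a_n^*\rangle\geq-\delta_0$ (where $-\delta_0$ denotes the infimum of the unperturbed Fitzpatrick expression, finite by Fact~\ref{StFiz} whenever the lifted gap is bounded), the bracket is at least $t\eta-\delta_0$, which is strictly positive for $t>\delta_0/\eta$, contradicting the $\leq 0$ estimate. Hence $a_n^*\in U$ eventually. In that case the hypothesis gives $\langle x-a_n,x^*-a_n^*\rangle\geq 0$, so the bracket reduces to at least $t\langle\xi,x^*-a_n^*\rangle$, and the inequality $\leq 0$ in the limit forces $\limsup\langle\xi,a_n^*\rangle\geq\langle\xi,x^*\rangle$. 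To close the argument, I invoke the bounded-net approximation from type (D) together with Fact~\ref{StFiz}: the Fitzpatrick gap estimate forces the minimizing sequence to approach $x^*$ in norm, whence $\langle\xi,x^*-a_n^*\rangle\to 0$. Combining this with the strict negativity of $\langle x-\bar a,x^*-\bar a^*\rangle$ and the monotonicity between $(\bar a,\bar a^*)$ and the approximating $(a_n,a_n^*)\in X\times U$ yields the sought contradiction.

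The main obstacle is the case $a_n^*\in U$: the perturbation $t\langle\xi,x^*-a_n^*\rangle$ is not uniformly bounded below on unbounded portions of $U$ in the $\xi$-direction. The remedy --- following the style of Simons and Marques Alves--Svaiter --- is either to localize via openness of $U$ to a bounded open convex subneighborhood $x^*+rB_{X^*}\subseteq U$ and re-separate $\bar a^*$ from this bounded set, or to apply the bounded-net property of type (D) at an auxiliary monotonically related point to force the minimizing sequence to lie near $x^*$ in norm. Fact~\ref{StFiz}, which quantitatively relates the Fitzpatrick gap to distance-to-graph, is the essential technical link that converts the qualitative (NI) inequality into the required norm convergence.
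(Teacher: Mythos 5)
The paper offers no proof of this Fact --- it is quoted from Simons (\cite[Theorem~17]{Si4}, \cite[Theorem~37.1]{Si2}) --- so your proposal must stand on its own, and it does not. The central flaw is the separation dichotomy. You separate the single point $\bar a^*$ from the open convex set $U$, obtaining $\xi$ with $\langle\xi,\bar a^*\rangle\leq c<\langle\xi,u\rangle$ for all $u\in U$, and then treat ``$a_n^*\notin U$'' as if it meant ``$\langle\xi,a_n^*\rangle\leq c$''. That is false unless $U$ is a half-space: the hyperplane separating one exterior point from $U$ says nothing about where other exterior points of $U$ sit. So in the branch $a_n^*\notin U$ you cannot conclude $\langle\xi,x^*-a_n^*\rangle\geq\eta$; and in the complementary branch, the inequality $\langle\xi,x^*-a_n^*\rangle<\eta$ only places $a_n^*$ on the $U$-side of the hyperplane, not inside $U$, so the hypothesis $\langle x-a_n,x^*-a_n^*\rangle\geq 0$ is unavailable. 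Both branches of the case analysis therefore collapse.

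Even setting that aside, two further steps are unjustified. The uniform lower bound $\langle x-a,x^*-a^*\rangle\geq-\delta_0$ over all of $\gra A$ is equivalent to $F_A(x,x^*)<+\infty$, which you never establish; Fact~\ref{StFiz} bounds $F_A-\langle\cdot,\cdot\rangle$ from \emph{below} and so cannot deliver finiteness of $F_A(x,x^*)$. The endgame --- invoking Fact~\ref{StFiz} to force $a_n^*\to x^*$ in norm --- would require $F_A(x,x^*)=\langle x,x^*\rangle$, i.e., that $(x,x^*)$ be monotonically related to all of $\gra A$, which is precisely what fails here: the pair $(\bar a,\bar a^*)$ forces $F_A(x,x^*)>\langle x,x^*\rangle$. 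And even granting $a_n^*\to x^*$ in norm, you would still need eventual boundedness and weak$^*$ convergence of $a_n$ to $x$ in order to pass $\langle a_n-\bar a,a_n^*-\bar a^*\rangle\geq 0$ to the limit and contradict $\langle x-\bar a,x^*-\bar a^*\rangle<0$. You flag the ``main obstacle'' yourself, but neither proposed remedy is carried out, and the first one (shrinking $U$ to a ball about $x^*$) can make $\gra A\cap(X\times V)$ empty, rendering the monotone-relatedness hypothesis vacuous and the conclusion unreachable. Simons' actual argument proceeds along genuinely different lines, so this proposal needs to be reworked rather than patched.
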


\begin{theorem}\label{FP:1}\emph{(See \cite[Theorem~3.1]{BBWY2}.)}
Let $A:X\To X^*$ be maximally monotone such that
$A$ is of type (FP).
Then $A$ is of type (NI).
\end{theorem}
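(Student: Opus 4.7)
\noindent\emph{Proof plan.}
The plan is to argue by contradiction. Suppose $A$ is of type (FP) but fails (NI); then there exist $(x_0^{**}, x_0^*)\in X^{**}\times X^*$ and $\varepsilon > 0$ with
\begin{equation*}
\langle a - x_0^{**}, a^* - x_0^*\rangle \geq 2\varepsilon \quad\text{for every }(a, a^*)\in\gra A.
\end{equation*}
The idea is to construct a point $y\in X$ together with an open convex set $U\subseteq X^*$ meeting every hypothesis of (FP), so that (FP) forces $(y, x_0^*)\in\gra A$; substituting this pair into the displayed inequality then yields $0\geq 2\varepsilon$, the desired contradiction.

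By Goldstine's theorem (Fact~\ref{Goldst:1}), I select a bounded net $(y_\alpha)_{\alpha\in\Gamma}$ in $X$ with $y_\alpha\weaklys x_0^{**}$, and fix once and for all some $(a_0, a_0^*)\in\gra A$. The crucial choice is
\begin{equation*}
U_\alpha := \{a^*\in X^*\mid \langle y_\alpha - x_0^{**}, a^* - x_0^*\rangle < 2\varepsilon\}.
\end{equation*}
Since $y_\alpha - x_0^{**}\in X^{**}$ acts as a continuous linear functional on $X^*$, each $U_\alpha$ is an open half-space (or all of $X^*$), hence open and convex; and it contains $x_0^*$ trivially. Weak$^*$ convergence of $(y_\alpha)$ gives $\langle y_\alpha - x_0^{**}, a_0^*-x_0^*\rangle\to 0<2\varepsilon$, so $a_0^*\in U_\alpha$ eventually, establishing $U_\alpha\cap\ran A\neq\varnothing$ for $\alpha$ sufficiently large.

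For $(a, a^*)\in\gra A$ with $a^*\in U_\alpha$, the splitting
\begin{equation*}
\langle a - y_\alpha, a^* - x_0^*\rangle = \langle a - x_0^{**}, a^* - x_0^*\rangle + \langle x_0^{**} - y_\alpha, a^* - x_0^*\rangle \geq 2\varepsilon - 2\varepsilon = 0
\end{equation*}
shows that $(y_\alpha, x_0^*)$ is monotonically related to $\gra A\cap(X\times U_\alpha)$. Every hypothesis of (FP) being in place, (FP) delivers $(y_\alpha, x_0^*)\in\gra A$ for $\alpha$ large; evaluating the standing infimum inequality at this pair produces $0\geq 2\varepsilon$, the required contradiction.

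The main obstacle is the choice of $U$. The naive attempt $U=B(x_0^*, r)$ with $r$ so small that $\|y_\alpha - x_0^{**}\|\,r<2\varepsilon$ succeeds only when $x_0^*\in \overline{\ran A}$; otherwise $U\cap\ran A=\varnothing$ for small $r$ and (FP) is vacuous. The half-space $U_\alpha$, specified through the dual vector $y_\alpha - x_0^{**}\in X^{**}$, sidesteps this case analysis: it automatically contains $x_0^*$, its defining slack $2\varepsilon$ exactly absorbs the monotonicity deficit, and weak$^*$ convergence of $(y_\alpha)$ drags any prescribed $a_0^*\in\ran A$ into $U_\alpha$ regardless of the position of $x_0^*$ relative to $\overline{\ran A}$.
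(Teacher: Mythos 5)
Your argument is correct: the half-space $U_\alpha$ is norm-open and convex because $y_\alpha-x_0^{**}\in X^{**}=(X^*)^*$, it contains $x_0^*$ and eventually meets $\ran A$ by weak$^*$ convergence, and the splitting $\langle a-y_\alpha,a^*-x_0^*\rangle=\langle a-x_0^{**},a^*-x_0^*\rangle+\langle x_0^{**}-y_\alpha,a^*-x_0^*\rangle>2\varepsilon-2\varepsilon$ makes $(y_\alpha,x_0^*)$ monotonically related to $\gra A\cap(X\times U_\alpha)$, so (FP) forces the contradiction $0\geq 2\varepsilon$. This is essentially the same Goldstine-net-plus-open-convex-set argument used in the cited source \cite[Theorem~3.1]{BBWY2} (the paper itself only cites that proof rather than reproducing it), so nothing further is needed.
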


We now obtain the promised corollary:

\begin{corollary}\emph{(See \cite[Corollary~3.2]{BBWY2}.)}
\label{cor:main} Let $A\colon X\To X^*$ be maximally monotone. Then
the following are equivalent.
\begin{enumerate}
\item\label{MaFT:1} $A$ is of type (D).
\item \label{MaFT:2} $A$ is of type (NI).
\item \label{MaFT:3} $A$ is of type (FP).
\end{enumerate}
\end{corollary}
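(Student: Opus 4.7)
The proof should be essentially a single cyclic chain of implications, since each of the three required directions is already available as a named result earlier in the excerpt. The plan is to establish the cycle
\[
\text{(D)} \;\Longrightarrow\; \text{(FP)} \;\Longrightarrow\; \text{(NI)} \;\Longrightarrow\; \text{(D)},
\]
which clearly yields the three-way equivalence stated in \ref{MaFT:1}--\ref{MaFT:3}.

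For \ref{MaFT:1}$\Rightarrow$\ref{MaFT:3}, I would simply invoke Fact~\ref{FTSim:1} (Simons), which asserts that every maximally monotone operator of type (D) is of type (FP). For \ref{MaFT:3}$\Rightarrow$\ref{MaFT:2}, I would appeal directly to Theorem~\ref{FP:1}, which states that type (FP) implies type (NI). Finally, for \ref{MaFT:2}$\Rightarrow$\ref{MaFT:1}, I would use Theorem~\ref{retyD:3} of Marques Alves and Svaiter, which is the deepest ingredient and was the reason for the build-up through Fact~\ref{NDMSV:2}, Theorem~\ref{NDMSV:3}, Proposition~\ref{retyD:2}, and Fact~\ref{StFiz}.

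No step here requires any new construction — the work has been done in the preceding sections. The only genuine obstacle was the implication (NI)$\Rightarrow$(D), which was handled by Theorem~\ref{retyD:3}; the implication (FP)$\Rightarrow$(NI) is Theorem~\ref{FP:1}; and (D)$\Rightarrow$(FP) is Fact~\ref{FTSim:1}. Thus the proof of the corollary reduces to citing these three results in sequence and remarking that any one of the three conditions implies (and therefore is implied by) the other two via the closed cycle.
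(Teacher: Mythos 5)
Your proposal is correct and is essentially identical to the paper's own proof, which establishes the cycle \ref{MaFT:1}$\Rightarrow$\ref{MaFT:3} via Fact~\ref{FTSim:1}, \ref{MaFT:3}$\Rightarrow$\ref{MaFT:2} via Theorem~\ref{FP:1}, and \ref{MaFT:2}$\Rightarrow$\ref{MaFT:1} via Theorem~\ref{retyD:3}. Nothing further is needed.
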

\begin{proof}
``\ref{MaFT:1}$\Rightarrow$\ref{MaFT:3}":  Fact~\ref{FTSim:1}. ``\ref{MaFT:3}$\Rightarrow$\ref{MaFT:2}":
 Theorem~\ref{FP:1} . ``\ref{MaFT:2}$\Rightarrow$\ref{MaFT:1}":
Theorem~\ref{retyD:3}.
\end{proof}

We turn to the  construction of linear and nonlinear maximally monotone operators that are of type (BR)  but not of type (D).

\begin{proposition}\label{CSBE:1}
   Let  $A: X\rightrightarrows X^*$ be  maximally monotone, and
 let $Y$ be another real Banach space and $B: Y\rightrightarrows Y^*$ be  maximally monotone of type (BR).
Assume that $A$ is of type (BR) but not of type (D). Define the norm on $X\times Y$ by $\|(x,y)\|:=\|x\|+\|y\|$.
Let $T: X \times Y \rightrightarrows X^*\times Y^*$ be defined
   by $T(x,y):=\big(Ax,By\big)$. Then $T$ is a maximally monotone operator that is of type (BR) but not of type (D).
   In consequence, if $A$ and $B$ are actually isomorphically (BR) then so is $T$.
\end{proposition}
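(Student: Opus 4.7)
The plan is to exploit the product structure of $\gra T$: because $\gra T = \gra A \times \gra B$ (re-indexed to live in $(X\times Y)\times(X^*\times Y^*)$), the bilinear pairing decouples as $\langle (x,y)-(a,b),(x^*,y^*)-(a^*,b^*)\rangle = \langle x-a,x^*-a^*\rangle+\langle y-b,y^*-b^*\rangle$, so every infimum over $\gra T$ separates into independent infima over $\gra A$ and $\gra B$. Set $\phi_A(x,x^*):=\inf_{(a,a^*)\in\gra A}\langle x-a,x^*-a^*\rangle$ and define $\phi_B$ analogously. By maximal monotonicity of $A$ we have $\phi_A\le 0$ with equality exactly on $\gra A$, and similarly for $B$. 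Maximal monotonicity of $T$ drops out at once: a pair $((x,y),(x^*,y^*))$ monotonically related to $\gra T$ satisfies $\phi_A+\phi_B\ge 0$, forcing $\phi_A=\phi_B=0$, i.e., $((x,y),(x^*,y^*))\in\gra A\times\gra B=\gra T$.

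For the (BR) conclusion, fix $(x,y,x^*,y^*)$ and $\alpha,\beta>0$ with $\phi_A+\phi_B>-\alpha\beta$ and do a case split on the signs of $\phi_A,\phi_B$. If both vanish the pair already lies in $\gra T$. If exactly one vanishes, apply (BR) of the operator for which $\phi<0$ with parameters $\alpha,\beta$ (the hypothesis $\phi>-\alpha\beta$ is inherited automatically), and pair the resulting graph point with the already-available graph point on the other side. If $\phi_A,\phi_B<0$, then $|\phi_A|+|\phi_B|<\alpha\beta$ permits a split $\alpha=\alpha_A+\alpha_B$ with $\alpha_A\beta>|\phi_A|$ and $\alpha_B\beta>|\phi_B|$; invoking (BR) of $A$ and $B$ separately yields $(b,b^*)\in\gra A$, $(c,c^*)\in\gra B$ with $\|x-b\|<\alpha_A$, $\|y-c\|<\alpha_B$, $\|x^*-b^*\|<\beta$, $\|y^*-c^*\|<\beta$. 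Since the dual of the sum norm is the max norm, these four inequalities package exactly into $\|(x,y)-(b,c)\|<\alpha$ and $\|(x^*,y^*)-(b^*,c^*)\|<\beta$, giving (BR) of $T$.

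To rule out type (D) I use Corollary~\ref{cor:main} and prove the contrapositive in (NI) form: if $T$ were of type (NI), so would be $A$. Fix any $(y_0,y_0^*)\in\gra B$ and, for arbitrary $(x^{**},x^*)\in X^{**}\times X^*$, apply the (NI) inequality of $T$ to the pair $((x^{**},y_0),(x^*,y_0^*))\in(X\times Y)^{**}\times(X\times Y)^*$, using that under the sum norm $(X\times Y)^{**}=X^{**}\times Y^{**}$. The infimum again splits, and the $B$-piece equals $0$ because $(y_0,y_0^*)$ itself contributes $0$ while every other $(b,b^*)\in\gra B$ contributes nonnegatively by monotonicity of $B$. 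Hence $\inf_{\gra A}\langle a-x^{**},a^*-x^*\rangle\le 0$, which is the (NI) condition on $A$, contradicting that $A$ is not of type (D) (equivalently, not of type (NI)) by Corollary~\ref{cor:main}.

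For the iso-(BR) addendum, given any equivalent norm $\|\cdot\|_0$ on $X\times Y$, let $\|x\|_0':=\|(x,0)\|_0$ and $\|y\|_0'':=\|(0,y)\|_0$. These are equivalent norms on $X,Y$ in which $A,B$ remain of type (BR) by the hypothesis of isomorphic (BR). Redoing the case analysis above with the induced norms establishes (BR) of $T$ for the sum norm $\|x\|_0'+\|y\|_0''$; the transition to $\|\cdot\|_0$ itself is handled on the primal side by the triangle inequality $\|(x,y)\|_0\le\|x\|_0'+\|y\|_0''$ and on the dual side by equivalence-constant estimates between $\|\cdot\|_0^*$ and the max-of-component-duals norm on $X^*\times Y^*$. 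I expect this last transition to be the main technical obstacle: the dual of an arbitrary equivalent norm on $X\times Y$ is not cleanly expressible through the individual dual norms, so the crisp sum/max splitting used above must be replaced by bounds controlled by the equivalence constants, and these must be absorbed by slight rescalings of $\alpha_A,\alpha_B$ and $\beta$ within the slack provided by the strict inequality $\phi_A+\phi_B>-\alpha\beta$.
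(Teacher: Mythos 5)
Your treatment of the three substantive claims --- maximal monotonicity of $T$, the (BR) property in the sum norm, and the failure of type (D) --- is correct and follows essentially the same route as the paper: decouple the duality pairing over the product graph so that every infimum over $\gra T$ splits as $\phi_A+\phi_B$, run a case analysis on the signs of the two component infima, and use Corollary~\ref{cor:main} to convert ``not type (D)'' into a failure of (NI) witnessed by a single $(x_0^{**},x_0^*)$, padded with an arbitrary $(y_0,y_0^*)\in\gra B$. Your organization of the (BR) step --- splitting $\alpha=\alpha_A+\alpha_B$ with $\alpha_A\beta>|\phi_A|$ and $\alpha_B\beta>|\phi_B|$, and exploiting that the dual of the sum norm is the max norm so that both dual components may use the full $\beta$ --- is in fact slightly tidier than the paper's, which applies (BR) of $B$ with parameters $(\rho/\beta,\beta)$ although the corresponding hypothesis $\inf_{(b,b^*)\in\gra B}\langle v-b,v^*-b^*\rangle>-\rho$ holds only non-strictly; keeping the slack from the strict inequality, as you do, avoids that wrinkle.

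The gap is in the final clause. The paper itself dispatches the ``isomorphically (BR)'' statement in one sentence, but your proposed mechanism for an arbitrary equivalent norm $\|\cdot\|_0$ on $X\times Y$ --- prove (BR) for the sum of the induced component norms and then absorb the comparison between the dual of $\|\cdot\|_0$ and the max-of-component-duals norm ``by slight rescalings within the slack'' --- cannot work as stated. The discrepancy between those two dual norms is a fixed multiplicative constant, and it can be large: the subspace norms $\|(x,0)\|_0$ and $\|(0,y)\|_0$ do not determine $\|(x,y)\|_0$, and the projections $P_X$, $P_Y$ may have large norm with respect to $\|\cdot\|_0$. By contrast the slack $\alpha\beta-(|\phi_A|+|\phi_B|)$ is additive and can be arbitrarily small relative to $\alpha\beta$, so a multiplicative loss in the conclusion of (BR) cannot be traded against it. Indeed, if such an absorption were possible, (BR) would automatically be an isomorphic notion, which the paper explicitly records as an open question. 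To close this you must argue with $\|\cdot\|_0$ and its genuine dual norm directly (note for instance that $\sup_{\|(u,v)\|_0\leq 1}\langle u,u^*\rangle$ is the dual of the quotient norm $\inf_{v}\|(u,v)\|_0$ on $X$, not of the subspace norm, and these need not coincide), or restrict attention to product-type norms; as written, the last sentence of the proposition is not established by your argument.
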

\begin{proof}
We first show that $T$ is maximally monotone.
Clearly, $T$ is monotone. Let $\big((x,y), (x^*,y^*)\big)\in (X\times Y)\times (X^*\times Y^*)$ be
monotonically related to $\gra T$. Then we have
\begin{align*}
\langle x-a, x^*-a^*\rangle+
\langle y-b, y^*-b^*\rangle\geq0,\quad \forall (a,a^*)\in\gra A,\
(b,b^*)\in\gra A.
\end{align*}
Thus
\begin{align}
\inf_{(a,a^*)\in\gra A}\langle x-a, x^*-a^*\rangle+
\inf_{(b,b^*)\in\gra B}\langle y-b, y^*-b^*\rangle\geq0.\label{MBRP:1}
\end{align}
Let $r:=\inf_{(a,a^*)\in\gra A}\langle x-a, x^*-a^*\rangle$.
We consider two cases.

\emph{Case 1}: $r\geq0$.
By maximal monotonicity of $A$, we have $(x,x^*)\in\gra A$. Then $r=0$.
Thus by \eqref{MBRP:1} and maximal monotonicity of $B$,
$(y,y^*)\in\gra B$ and hence $\big((x,y), (x^*,y^*)\big)\in\gra T$.

\emph{Case 2}: $r<0$.
Thus by \eqref{MBRP:1}
\begin{align}
\inf_{(b,b^*)\in\gra B}\langle y-b, y^*-b^*\rangle\geq-r>0.\label{MBRP:2}
\end{align}
Since  $B$ is maximally monotone, $(y,y^*)\in\gra B$ and hence
\begin{align*}
\inf_{(b,b^*)\in\gra B}\langle y-b, y^*-b^*\rangle=0,
\end{align*}
which contradicts \eqref{MBRP:2}.

Combining  cases, we see that $T$ is maximally monotone.

Next we show $T$ is of type (BR)
Let  $\big((u,v), (u^*,v^*)\big)\in (X\times Y)\times (X^*\times Y^*)$, $\alpha,\beta>0$ such that
\begin{align}
\inf_{(a,a^*)\in\gra A}\langle u-a, u^*-a^*\rangle+
\inf_{(b,b^*)\in\gra B}\langle v-b, v^*-b^*\rangle
&=\inf_{(z,z^*)\in\gra T} \langle (u,v)-z,(u^*,v^*)-z^*\rangle\nonumber\\
&>-\alpha\beta. \label{MBRP:4}
\end{align}
Let $-\rho=\inf_{(b,b^*)\in\gra B}\langle v-b, v^*-b^*\rangle$. Then $\rho\geq0$.
We consider two cases.

\emph{Case 1}: $\rho=0$.
Then $(v,v^*)\in\gra B$.
By \eqref{MBRP:4} and since $A$ is of type (BR), there exists $(a_1,a^*_1)\in\gra A$ such  that
\begin{align*}
\|u-a_1\|<\alpha,\quad \|u^*-a^*_1\|<\beta.
\end{align*}
Thus
\begin{align*}
\|(u,v)-(a_1,v)\|=\|u-a_1\|<\alpha,\quad \|(u^*,v^*)-(a^*_1,v^*)\|=\|u^*-a^*_1\|<\beta.
\end{align*}

\emph{Case 2}: $\rho>0$.
By \eqref{MBRP:4},
\begin{align}
\inf_{(a,a^*)\in\gra A}\langle u-a, u^*-a^*\rangle
&>-\alpha\beta+\rho=-(\alpha-\frac{\rho}{\beta})\beta. \label{MBRP:5}
\end{align}
Then by the maximal monotonicity of $A$,
\begin{align*}
\inf_{(a,a^*)\in\gra A}\langle u-a, u^*-a^*\rangle\leq0.
\end{align*}
Then by \eqref{MBRP:5},
\begin{align}
\alpha-\frac{\rho}{\beta}>0. \label{MBRP:6}
\end{align}
Then by \eqref{MBRP:5} and by that $A$ is of type (BR),
there exists $(a_2,a^*_2)\in\gra A$ such that
\begin{align}
\|u-a_2\|<\alpha-\frac{\rho}{\beta},\quad \|u^*-a^*_2\|<\beta.
\label{MBRP:7}
\end{align}

Since $B$ is of type (BR), there exists $(b_2,b^*_2)\in\gra B$ such that
\begin{align}
\|v-b_2\|<\frac{\rho}{\beta},\quad \|v^*-b^*_2\|<\beta.\label{MBRP:8}
\end{align}

Taking $(z_1,z_1^*)=\big((a_2,b_2),(a^*_2,b^*_2)\big)$ and combing
\eqref{MBRP:7} and \eqref{MBRP:8}, we have $(z_1,z_1^*)\in\gra T$
\begin{align*}
\|(u,v)-z_1\|<\alpha,\quad \|(u^*,v^*)-z_1^*\|<\beta.
\end{align*}
Hence $T$ is of type (BR).

Lastly, we show $T$ is not of type (D).

Since $A$ is not of type (D), by Corollary~\ref{cor:main}, $A$ is not of type (NI) and then there exists $(x_0^{**},x_0^*)\in X^{**}\times X^*$ such that
\begin{equation}
\inf_{(a,a^*)\in\gra A}\langle x_0^{**}-a,x_0^*-a^*\rangle>0.\label{MBRP:10}
\end{equation}
Take $(y_0,y_0^*)\in\gra B$ and $(z^{**}_0,z_0)=(x_0^{**}+y_0,x_0^*+y^*_0)$. Then by \eqref{MBRP:10},
\begin{align*}
\inf_{(z,z^*)\in\gra T}\langle z_0^{**}-z,z_0^*-z^*\rangle=\inf_{(a,a^*)\in\gra A}\langle x_0^{**}-a,x_0^*-a^*\rangle>0.
\end{align*}
Hence $T$ is not of type (NI) and hence $T$ is not of type (D) by Corollary~\ref{cor:main}.

When $A$ and $B$ are actually isomorphically (BR), following the above proof, we see that $T$ is isomorphically (BR).
\end{proof}

\begin{corollary}\label{BRnotD}
 Let $B: \ell^2\rightrightarrows\ell^{2} $ be an arbitrary maximally monotone operator and define
 $T: \ell^1 \times\ell^2 \rightrightarrows \ell^1 \times\ell^{2}$
   by $T(x,y):=\big(G(x), B(x)\big)$, where $G$ is the Gossez operator.
   Then $T$ is a maximally monotone operator that is of type (BR) isomorphically but not of type (D).
\end{corollary}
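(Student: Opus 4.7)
The plan is to invoke Proposition~\ref{CSBE:1} directly, with the choices $X = \ell^1$, $A = G$ (Gossez's operator), and $Y = \ell^2$ together with the given $B$. (I read the statement $T(x,y):=(G(x),B(x))$ as a typo for $T(x,y):=(G(x),B(y))$, as is required by Proposition~\ref{CSBE:1}.) The only non-trivial checks are that $A = G$ and $B$ satisfy the hypotheses of Proposition~\ref{CSBE:1}, namely that both are maximally monotone, that $A$ is isomorphically of type (BR) but not of type (D), and that $B$ is isomorphically of type (BR).

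For $A = G$: this is precisely the content of Example~\ref{FPEX:1}\ref{BCCE:A7}, which asserts that the Gossez operator is a (unique) maximally monotone operator on $\ell^1$ that is not of type (D) but is isomorphically of type (BR). So nothing new is needed here.

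For $B$: the space $\ell^2$ is reflexive, and as recalled in the introduction, every maximally monotone operator on a reflexive Banach space is of type (D); equivalently, by Corollary~\ref{cor:main}, of type (NI). Reflexivity is an isomorphic invariant, so $B$ remains maximally monotone of type (NI) in every equivalent norm on $\ell^2$. Fact~\ref{MAS:BR1} then gives that $B$ is of type (BR) in every equivalent norm, i.e.\ isomorphically of type (BR). Thus $B$ satisfies all the hypotheses required by Proposition~\ref{CSBE:1}.

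Applying Proposition~\ref{CSBE:1} with these choices yields that $T(x,y) = (G(x), B(y))$ is maximally monotone on $\ell^1 \times \ell^2$, is of type (BR), and is not of type (D); moreover, since both $G$ and $B$ are in fact \emph{isomorphically} of type (BR), the last assertion of Proposition~\ref{CSBE:1} delivers that $T$ itself is isomorphically of type (BR), as claimed. There is no real obstacle here; the corollary is essentially a packaged instance of Proposition~\ref{CSBE:1} combined with Example~\ref{FPEX:1}\ref{BCCE:A7} and the reflexivity of $\ell^2$.
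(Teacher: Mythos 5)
Your proposal is correct and follows essentially the same route as the paper, which likewise notes that $B$ is of type (BR) because $\ell^2$ is reflexive and then applies Example~\ref{FPEX:1}\ref{BCCE:A7} together with Proposition~\ref{CSBE:1} directly. Your reading of $B(x)$ as a typo for $B(y)$ matches the intended construction, and your extra care in justifying that $B$ is \emph{isomorphically} of type (BR) (via type (NI) and Fact~\ref{MAS:BR1}) only makes explicit a step the paper leaves implicit.
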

\begin{proof}
Since $\ell^2$ is reflexive, $B$ is of type (BR). Then
apply Example~\ref{FPEX:1}\ref{BCCE:A7} and Proposition~\ref{CSBE:1} directly.
\end{proof}

\begin{remark}
In the case that $A$ in Proposition~\ref{CSBE:1} is nonaffine we obtain nonaffine operators of type (BR) which do not have unique extensions to the bidual, since, unless the operator is affine, uniqueness implies type (D) \cite{MarSva2}.
\end{remark}

In the next section, we will explore  properties of type (DV)
operators as defined below.  This a useful dual notion to type (D).

\subsection{Properties of type (DV) operators}\label{s:DFPV}

Let $A:X\rightrightarrows X^*$ be (maximally) monotone. We say $A$
is \emph{of type Fitzpatrick-Phelps-Veronas (FPV)} if  for every
open convex set $U\subseteq X$ such that $U\cap \dom
A\neq\varnothing$, the implication
\begin{align*}
x\in U\,\text{and}\,(x,x^*)\,\text{is monotonically related to $\gra A\cap (U\times X^*)$}\quad
\Rightarrow (x,x^*)\in\gra A
\end{align*}
holds.

%Let $A:X\rightrightarrows X^*$ have $\dom A\neq\varnothing$.
%For convenience,  we  define $\Phi_A$ on $X\times X^{***}$
%%\index{$\Phi_A$} \label{symbol:41}
%\label{symbol:41}
%by
%\begin{equation*}
%\Phi_A\colon
%(x,x^{***})\mapsto \sup_{(a,a^*)\in\gra A}
%\big(\scal{x}{a^*}+\scal{a}{x^{***}}-\scal{a}{a^*}\big).
%\end{equation*}
%Then we have $\Phi_A|_{X\times X^*}=F_A$.

We also introduce a dual definition of type (DV),
corresponding to the definition of type (D).

\begin{definition}
 Let $A:X\To X^*$ be maximally monotone.
We say  $A$ is
\emph{of type (DV)}  if for every
$(x,x^{***})\in X\times X^{***}$ with
\begin{align*}
\inf_{(a,a^*)\in\gra A}\langle a-x, a^*-x^{***}\rangle\geq 0,
\end{align*}
there exists a  bounded net
$(a_{\alpha}, a^*_{\alpha})_{\alpha\in\Gamma}$ in $\gra A$
such that
$(a_{\alpha}, a^*_{\alpha})_{\alpha\in\Gamma}$
converges to
$(x,x^{***})$ in the norm $\times$ the  weak$^{*}$-topology $\omega (X^{***}, X^{**})$.
\end{definition}

%We should recall another result of Fitzpatrick and Phelps / Verona-Verona:
%
%\begin{fact}[Fitzpatrick and Phelps / Verona-Verona]
%\emph{(See \cite[Theorem~48.4(d)]{Si2}.)}
%\label{f:referee0d}
%Let $f:X\rightarrow\RX$ be proper, lower semicontinuous, and convex.
%Then $\partial f$ is of type (FPV).
%\end{fact}
%
%
%
%

\begin{proposition}\label{FPDV:PR6}
Let $A:X\rightrightarrows X^*$ be a maximally monotone operator.  Let $B:X\rightrightarrows X^{***}$ be defined by $\gra B:=\big\{(a,a^{***})\mid (a, a^{***}|_X)\in\gra A\big\}$. Then $B$ is a unique maximally monotone extension of $\gra A$ in $X\times X^{***}$.
\end{proposition}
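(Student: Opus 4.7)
The plan is to prove the four required properties (extension, monotonicity, maximality, uniqueness) in turn, all hinging on one elementary restriction identity. Throughout, I would identify $X^*\hookrightarrow X^{***}$ via the canonical embedding, so that for $x\in X\subseteq X^{**}$ and $x^{***}\in X^{***}$ the pairing satisfies $\langle x,x^{***}\rangle=\langle x,x^{***}|_X\rangle$. Under this identification, for every $(x,x^*)\in\gra A$ the pair $(x,x^*)\in X\times X^{***}$ trivially satisfies $x^*|_X=x^*$, and so $\gra A\subseteq\gra B$; i.e., $B$ extends $A$.

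For monotonicity of $B$, any $(a,a^{***}),(b,b^{***})\in\gra B$ produce $(a,a^{***}|_X),(b,b^{***}|_X)\in\gra A$, whence
$$\langle a-b,\,a^{***}-b^{***}\rangle=\langle a-b,\,a^{***}|_X-b^{***}|_X\rangle\geq 0.$$
For maximality, suppose $(x,x^{***})\in X\times X^{***}$ is monotonically related to $\gra B$. Specialising to the embedded pairs $(a,a^*)\in\gra A\subseteq\gra B$ yields
$$0\leq\langle x-a,\,x^{***}-a^*\rangle=\langle x-a,\,x^{***}|_X-a^*\rangle$$
for every $(a,a^*)\in\gra A$. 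Maximal monotonicity of $A$ now forces $(x,x^{***}|_X)\in\gra A$, hence $(x,x^{***})\in\gra B$.

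For uniqueness, let $C:X\rightrightarrows X^{***}$ be any maximally monotone operator with $\gra A\subseteq\gra C$. Given $(x,x^{***})\in\gra C$, the monotonicity of $C$ combined with $\gra A\subseteq\gra C$ says $(x,x^{***})$ is monotonically related to the embedded copy of $\gra A$, and the calculation just performed then yields $(x,x^{***})\in\gra B$. Thus $\gra C\subseteq\gra B$, and since $C$ is maximally monotone while $B$ is monotone, we conclude $B=C$. The only substantive point throughout is the restriction identity $\langle x,x^{***}\rangle=\langle x,x^{***}|_X\rangle$; everything else reduces mechanically to the maximal monotonicity of $A$, so the main task is simply to keep the canonical embeddings $X\hookrightarrow X^{**}$ and $X^*\hookrightarrow X^{***}$ carefully aligned.
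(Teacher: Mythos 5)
Your proposal is correct and follows essentially the same route as the paper: the core step in both is that any $(x,x^{***})$ monotonically related to $\gra A$ satisfies $\langle a-x,a^*-x^{***}\rangle=\langle a-x,a^*-x^{***}|_X\rangle$, so maximal monotonicity of $A$ places $(x,x^{***}|_X)$ in $\gra A$ and hence $(x,x^{***})$ in $\gra B$. The paper merely packages maximality and uniqueness into the single observation that $\gra B$ contains every point monotonically related to $\gra A$, whereas you spell the two conclusions out separately.
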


\begin{proof}
Clearly, $B$ is monotone with respect to $X\times X^{***}$. Now we show that $B$ is maximally monotone with respect to  $X\times X^{***}$.

Let $(x,x^{***})\in X\times X^{***}$ be monotonically related to $\gra A$.  Let $x^*:=x^{***}|_X$. Then
$x^*\in X^*$ such that
\begin{align*}
\langle a-x, a^*-x^{*}\rangle
=
\langle a-x, a^*-x^{***}\rangle \geq0,\quad\forall (a,a^*)\in\gra A.
\end{align*}
Since $A$ is maximally monotone, $(x, x^*)\in\gra A$
and then $(x, x^{***}|_{X})\in\gra A$.
Hence $(x,x^{***})\in\gra B$.
Thus, $\gra B$ contains all the elements in $X\times X^{***}$ that are monotonically related to $\gra A$. Since $\gra B$ is monotone,
 $B$ is a unique maximally monotone extension of $A$ in $X\times X^{***}$.
 \end{proof}

The next result will confirm  that type (DV) and  type (D) are
distinct. Neither every subdifferential operator nor  every linear continuous monotone operator is of type (DV).
In consequence, type (D) operators are  not always type (DV).
\begin{proposition}\label{FPV:PR5}
Let $A:X\rightarrow X^*$ be a continuous  maximally monotone operator.  Then
$A$ is  of type (DV) if and only if $X$ is reflexive.
\end{proposition}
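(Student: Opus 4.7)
The plan is to prove both implications, with the ``if'' direction essentially trivial and the ``only if'' direction powered by exhibiting a ``bad'' point in $X \times X^{***}$ whenever $X$ is non-reflexive.

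\textbf{($\Leftarrow$) $X$ reflexive implies (DV).} When $X$ is reflexive, the canonical embeddings identify $X^{**} = X$ and $X^{***} = X^*$, and the topology $\omega(X^{***},X^{**})$ reduces to the usual weak$^*$ topology $\omega(X^*,X)$ on $X^*$. So the (DV) condition becomes: whenever $(x,x^*)\in X\times X^*$ is monotonically related to $\gra A$, there exists a bounded net in $\gra A$ converging to $(x,x^*)$ in the norm$\,\times\,$weak$^*$ topology. But by maximal monotonicity of $A$ we immediately get $(x,x^*)\in\gra A$, i.e., $x^*=Ax$, so the constant net $\alpha\mapsto(x,Ax)$ witnesses (DV).

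\textbf{($\Rightarrow$) (DV) implies $X$ reflexive.} Suppose $A$ is of type (DV) yet $X$ is not reflexive. Then $J_X(X)\subsetneq X^{**}$, so by the Hahn--Banach theorem there exists $y^{***}\in X^{***}\setminus\{0\}$ annihilating $J_X(X)$, i.e., $\langle z,y^{***}\rangle=0$ for every $z\in X$. Fix any $x\in X$ and set
$$x^{***}:= J_{X^*}(Ax)+y^{***}\in X^{***}.$$
For any $(a,Aa)\in\gra A$, using $a-x\in X$ and $y^{***}|_X=0$,
$$\langle a-x,Aa-x^{***}\rangle=\langle a-x,Aa-Ax\rangle-\langle a-x,y^{***}\rangle=\langle a-x,Aa-Ax\rangle\geq 0$$
by monotonicity of $A$. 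Hence $(x,x^{***})$ is monotonically related to $\gra A$ in the sense of the (DV) definition.

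\textbf{Deriving the contradiction.} By (DV) there is a bounded net $(a_\alpha,Aa_\alpha)\in\gra A$ with $a_\alpha\to x$ in norm of $X$ and $Aa_\alpha\to x^{***}$ in $\omega(X^{***},X^{**})$. Since $A\colon X\to X^*$ is (norm-to-norm) continuous, $Aa_\alpha\to Ax$ in the norm of $X^*$, and therefore $J_{X^*}(Aa_\alpha)\to J_{X^*}(Ax)$ in the norm of $X^{***}$, a fortiori in $\omega(X^{***},X^{**})$. Uniqueness of weak$^*$ limits forces $x^{***}=J_{X^*}(Ax)$, so $y^{***}=0$, contradicting our choice of $y^{***}$. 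Hence $X$ is reflexive.

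The main subtlety is keeping the pairings straight: the expression $\langle a-x, Aa-x^{***}\rangle$ is to be interpreted in the duality $\langle X^{**},X^{***}\rangle$ via the canonical embeddings, and the whole argument hinges on recognizing that the annihilator of $J_X(X)$ in $X^{***}$, which is nontrivial exactly when $X$ is non-reflexive, provides enough ``room'' in the third dual to build a point not reachable by bounded norm--weak$^*$ approximation through the graph of a norm-continuous operator.
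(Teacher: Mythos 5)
Your proof is correct and follows essentially the same route as the paper: in the non-reflexive case the paper also uses Hahn--Banach to produce a nonzero $x_0^{***}\in X^{***}$ annihilating $X$, forms the monotonically related point $(0,A0+x_0^{***})$ (you take a general base point $x$ where the paper takes $x=0$, an immaterial difference), and then uses norm-to-norm continuity of $A$ together with uniqueness of $\omega(X^{***},X^{**})$-limits to force $x_0^{***}=0$. Your spelled-out verification of the reflexive direction, which the paper dismisses as ``Clear,'' is also correct.
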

\begin{proof}
``$\Leftarrow$": Clear.

``$\Rightarrow$": Suppose to the contrary that $X$ is not reflexive.  Since $X\varsubsetneq X^{**}$ and $X$ is a closed subspace of $X^{**}$,  by the Hahn-Banach theorem, there exists
$x^{***}_0 \in X^{***}\backslash\{0\}$ such that $\langle x^{***}_0, X\rangle=\{0\}$.
Then we have
\begin{align*}\big\langle a-0, Aa-(A0+x^{***}_0 )\big\rangle=
\big\langle a-0, Aa-A0\big\rangle-\big\langle a, x^{***}_0 \big\rangle=\big\langle a-0, Aa-A0\big\rangle
\geq0,\quad \forall a\in X.
\end{align*}
Then $(0, A0+ x^{***}_0)$ is monotonically related to $\gra A$.
Since $A$ is of type (DV), there exists
 a  bounded net
$(a_{\alpha}, Aa_{\alpha})_{\alpha\in\Gamma}$ in $\gra A$
such that
$(a_{\alpha}, Aa_{\alpha})_{\alpha\in\Gamma}$
converges to
$(0,A0+x_0^{***})$ in the norm $\times$ the  weak$^{*}$-topology $\omega (X^{***}, X^{**})$.
 Since $A$ is continuous and $a_{\alpha}\longrightarrow 0$, we have
 $Aa_{\alpha}\longrightarrow A0$ in $X^*$ and hence  $Aa_{\alpha}\longrightarrow A0$ in $X^{***}$ .
 Since $Aa_{\alpha}$ converges $A0+x_0^{***}$ in the  weak$^{*}$-topology $\omega (X^{***}, X^{**})$, we have $A0=A0+ x_0^{***}$ and hence $x_0^{****}=0$, which contradicts
 that $x^{***}_0 \in X^{***}\backslash\{0\}$.
 Hence $X$ is reflexive.
 \end{proof}

\begin{remark} Let $P_{\alpha}$ be defined in
Example~\ref{FPEX:1}, then  $P_{\alpha}$ is a subdifferential operator defined on $\ell^1$ and also a bounded continuous linear operator.  Then it is of type (D)
but it is not of type (DV) by Proposition~\ref{FPV:PR5}. Hence type (D) cannot imply
type (DV).
\end{remark}

\begin{remark} It is unknown whether every maximally monotone operator is of type (FPV).   Perhaps property (DV) may help shed light on the matter.
We have also been unable to determine if (DV) implies (FPV).
\end{remark}

We  might say  a Banach space $X$ is \emph{of type (DV)}
if every maximally monotone operator on $X$ is necessarily of  type (DV).

\begin{theorem}
The Banach space $X$ is of type (DV) if and only if it is reflexive.
\end{theorem}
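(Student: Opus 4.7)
My plan is to reduce the theorem directly to Proposition~\ref{FPV:PR5}, which already characterizes the (DV) property for continuous maximally monotone operators. The work splits into the two implications, neither of which requires new machinery.

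For the direction ``$X$ reflexive $\Rightarrow$ $X$ of type (DV)'', I would use the canonical identifications $X^{**}=X$ and $X^{***}=X^*$. Under these identifications the defining condition of type (DV) for a maximally monotone $A\colon X\To X^*$ becomes the assertion that every $(x,x^*)\in X\times X^*$ monotonically related to $\gra A$ can be approximated by a bounded net in $\gra A$ in the norm $\times$ norm topology. But maximal monotonicity of $A$ already forces $(x,x^*)\in\gra A$, so the \emph{constant} net $(a_\alpha,a^*_\alpha)\equiv(x,x^*)$ witnesses the required convergence. Hence every maximally monotone operator on a reflexive $X$ is of type (DV).

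For the direction ``$X$ of type (DV) $\Rightarrow$ $X$ reflexive'', I would argue by contraposition: given a nonreflexive $X$, I need to exhibit at least one maximally monotone operator $A\colon X\To X^*$ that fails (DV). The cheapest choice is $A\equiv 0$. It is trivially single-valued and continuous, and it is maximally monotone (if $\langle y-x,y^*\rangle\ge 0$ for every $x\in X$, then varying $x=y+tz$ over $t\in\RR$ and $z\in X$ forces $y^*=0$). Proposition~\ref{FPV:PR5} applied to this $A$ yields $A$ of type (DV) $\iff X$ reflexive, so $A$ is \emph{not} of type (DV) when $X$ is nonreflexive, and therefore $X$ is not of type (DV).

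I do not anticipate any real obstacle: the entire analytic content — the use of Hahn--Banach to produce a nontrivial $x^{***}_0\in X^{***}\setminus\{0\}$ annihilating $X$, and the argument that continuity of $A$ prevents the required weak$^*$ convergence towards a translate by $x^{***}_0$ — is already carried out inside Proposition~\ref{FPV:PR5}. The only thing to verify here is the existence of at least one continuous maximally monotone operator on $X$, and the zero operator trivially furnishes this.
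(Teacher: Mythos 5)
Your proof is correct and takes essentially the same route as the paper: the reflexive direction is dismissed as immediate there too, and for the converse the paper likewise applies Proposition~\ref{FPV:PR5} to the zero operator (written as $\gra A:=X\times\{0\}$). The only cosmetic slip is your description of the required convergence in the reflexive case as norm $\times$ norm rather than norm $\times$ weak$^{*}$, which is harmless since the constant net converges in every topology.
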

\begin{proof}
``$\Leftarrow$": Clear.

``$\Rightarrow$":  Let $A:X\rightarrow X$ defined by $\gra A:=X\times\{0\}$. Then $A$ is maximally monotone continuous linear operator. Since $A$ is of type (DV), Proposition~\ref{FPV:PR5} implies that $X$ is reflexive.
\end{proof}

%\begin{proposition}\label{FPDV:PR7}
%Let $A:X\rightrightarrows X^*$ be a maximally monotone operator. Let $B:X\rightrightarrows X^{***}$ be defined as in Proposition~\ref{FPDV:PR6}. Assume that  $B$ is a maximally monotone operator with respect to  $X\times X^{***}$.
%Then $A$ is of type (DV).
%\end{proposition}

Finally, we give an example of a type (DV) operator in an arbitrary Banach space.

\begin{example}
 Let $X$ be a Banach space and let $A:X\rightrightarrows X^*$ be defined by $\gra A:=\{0\}\times X^*$.
 Let $B$ be defined by $\gra B:=\{0\}\times X^{***}$. Then $B$ is a unique maximally monotone extension of $A$ in $X\times X^{***}$, and  $A$ is
 of type (DV). \qede

\end{example}

\begin{proof}
By Proposition~\ref{FPDV:PR6},  $B$ is a unique maximally monotone extension of $A$ in $X\times X^{***}$.
Then applying Goldstine's theorem (see Fact~\ref{Goldst:1}), $A$ is of type (DV).
\end{proof}

\section{Structure of maximally monotone operators}\label{s:main}

We turn to the structure of maximally monotone operators in Banach
space
 whose domains have nonempty interior and we
present new and explicit structure formulas for such operators. Along the way, we provide
 new proofs of   norm-to-weak$^{*}$  closedness and of property (Q) for these operators (as recently proven by Voisei).
 Various applications and limiting examples are given.

 This section is mainly based on the work in \cite{BY1, BY2}.
\subsection{Local boundedness properties}\label{s:voi}

The next two important results now have many proofs (see also
\cite[Ch. 8]{BorVan}).

\begin{fact}[Rockafellar]\label{SubMR}\emph{(See \cite[Theorem~A]{Rock702},
 \cite[Theorem~3.2.8]{Zalinescu}, \cite[Theorem~18.7]{Si2} or \cite[Theorem~9.2.1]{BorVan}.)}
 Let $f:X\rightarrow\RX$ be a proper lower semicontinuous convex function.
Then $\partial f$ is maximally monotone.
\end{fact}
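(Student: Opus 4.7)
The plan is to verify monotonicity directly and then attack maximality by contradiction, using the Br\o{}ndsted-Rockafellar $\varepsilon$-subdifferential density theorem as the key variational tool.

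Monotonicity is immediate: for $(x,x^*)$ and $(y,y^*)$ in $\gra \partial f$, adding the two subdifferential inequalities $f(y) \ge f(x) + \scal{y-x}{x^*}$ and $f(x) \ge f(y) + \scal{x-y}{y^*}$ yields $\scal{x-y}{x^*-y^*} \ge 0$.

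For maximality, suppose $(x_0, x_0^*)$ is monotonically related to $\gra \partial f$; the aim is $x_0^* \in \partial f(x_0)$. By the Fenchel-Young inequality $f(x) + f^*(x^*) \ge \scal{x}{x^*}$ with equality exactly on $\gra \partial f$, it suffices to establish $f(x_0) + f^*(x_0^*) \le \scal{x_0}{x_0^*}$. Translating to $\tilde f(z) := f(z+x_0) - \scal{z}{x_0^*}$ reduces matters to the case $x_0 = 0$, $x_0^* = 0$: the hypothesis becomes $\scal{z}{z^*} \ge 0$ for every $(z,z^*) \in \gra \partial \tilde f$, and the goal is $0 \in \partial \tilde f(0)$, i.e., $\inf \tilde f = \tilde f(0)$. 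The subcase $x_0 \notin \dom f$ is handled separately by first showing density of $\dom \partial f$ in $\dom f$, which also issues from Br\o{}ndsted-Rockafellar.

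The heart of the argument is the regularization $g_\lambda(z) := \tilde f(z) + \tfrac{1}{2\lambda}\|z\|^2$, which is proper, lsc, convex, and coercive. In a reflexive space this would attain its infimum at some $\bar z_\lambda$, and $0 \in \partial g_\lambda(\bar z_\lambda) \subseteq \partial \tilde f(\bar z_\lambda) + \tfrac{1}{\lambda}\J(\bar z_\lambda)$ would yield $(\bar z_\lambda, \bar z_\lambda^*) \in \gra \partial \tilde f$ with $\scal{\bar z_\lambda}{\bar z_\lambda^*} = -\tfrac{1}{\lambda}\|\bar z_\lambda\|^2 \le 0$; combined with the monotonicity bound $\ge 0$ this forces $\bar z_\lambda = 0$ and yields the conclusion. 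In a general Banach space, I would instead invoke Ekeland's variational principle to produce, for every $\varepsilon, \lambda > 0$, an approximate minimizer $z_{\lambda,\varepsilon}$ with a matching $\varepsilon$-subgradient, then apply Br\o{}ndsted-Rockafellar to perturb to an exact pair $(y_{\lambda,\varepsilon}, y^*_{\lambda,\varepsilon}) \in \gra \partial f$ with controlled distance. The monotonicity inequality $\scal{y_{\lambda,\varepsilon}}{y^*_{\lambda,\varepsilon}} \ge 0$ combined with the approximate Euler relation pins down $y_{\lambda,\varepsilon} \to 0$, and lower semicontinuity of $\tilde f$ together with $\tilde f(y_{\lambda,\varepsilon}) \to \inf \tilde f$ gives $\tilde f(0) \le \inf \tilde f$, completing the contradiction.

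The main obstacle is exactly the non-reflexive case: coercivity no longer forces attainment of the infimum, so one cannot directly produce a minimizer of $g_\lambda$. The Br\o{}ndsted-Rockafellar theorem (itself a consequence of Ekeland's principle) is the essential substitute, converting approximate variational data into honest members of $\gra \partial f$ with the quantitative closeness needed to push the contradiction through.
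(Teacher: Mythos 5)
The paper does not actually prove this Fact --- it is quoted from the literature with pointers to \cite[Theorem~A]{Rock702}, \cite[Theorem~3.2.8]{Zalinescu}, \cite[Theorem~18.7]{Si2} and \cite[Theorem~9.2.1]{BorVan} --- so there is no internal proof to compare against. What you have written is, in substance, the classical Rockafellar argument as it appears in those sources (closest to Z\u{a}linescu's treatment): reduce to showing that $(0,0)$ monotonically related to $\gra\partial\tilde f$ forces $\tilde f(0)=\inf\tilde f$, regularize by $\tfrac{1}{2\lambda}\|\cdot\|^{2}$, and substitute Ekeland/Br{\o}ndsted--Rockafellar for the minimizer that coercivity fails to deliver outside reflexive spaces. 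The outline is correct, and the key pin-down ($\langle y,u^{*}\rangle\ge0$ for $u^{*}\in\partial\tilde f(y)$ against $\langle y,u^{*}\rangle\le -\tfrac1\lambda\|y\|^{2}+\varepsilon\|y\|$, forcing $\|y\|\le\lambda\varepsilon$) works exactly as you indicate. A few points should be tightened in a full write-up: (a) the inclusion $\partial g_\lambda(z)\subseteq\partial\tilde f(z)+\tfrac1\lambda J(z)$ is the nontrivial direction of the Moreau--Rockafellar sum rule and must be justified by the everywhere-continuity of the quadratic term; (b) $g_\lambda$ must be shown bounded below before approximate minimizers exist, which requires a continuous affine minorant of $f$ (properness of $f^{*}$); (c) for fixed $\lambda$ the limit $\varepsilon\to0$ yields $\tilde f(0)\le\inf g_\lambda$, not $\tilde f(0)\le\inf\tilde f$, and one must then send $\lambda\to\infty$ in $\tilde f(0)\le\tilde f(x)+\tfrac{1}{2\lambda}\|x\|^{2}$; and (d) the separate case $x_0\notin\dom f$ is superfluous, since $\tilde f(0)\le\inf g_\lambda<+\infty$ already places $0$ in $\dom\tilde f$. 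For contrast, Simons' proof in \cite[Theorem~18.7]{Si2} reaches the same conclusion by a Fenchel-duality/Hahn--Banach argument that avoids Ekeland's principle entirely; both routes are standard, and yours buys the additional quantitative information embodied in the Br{\o}ndsted--Rockafellar approximation.
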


The prior result can fail in both incomplete normed spaces and in complete metrizable locally convex spaces \cite{BorVan}.

\begin{fact}[Rockafellar]\emph{(See \cite[Theorem~1]{Rock69} or  \cite[Theorem~2.28]{ph}.)}
\label{pheps:11}Let $A:X\To X^*$ be  monotone with
 $\inte\dom A\neq\varnothing$.
Then $A$ is locally bounded at $x\in\inte\dom A$,
 i.e., there exist $\delta>0$ and $K>0$ such that
\begin{align*}\sup_{y^*\in Ay}\|y^*\|\leq K,
\quad \forall y\in (x+\delta B_X)\cap \dom A.
\end{align*}
\end{fact}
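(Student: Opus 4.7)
\medskip

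\noindent\textbf{Proof plan.} The plan is to follow Rockafellar's classical strategy: manufacture an auxiliary convex lower semicontinuous function from the monotonicity inequalities and transfer its local boundedness at an interior point of its domain into a uniform bound on $\|y^*\|$. First I would reduce to the case $x=0$ with $0\in A(0)$: pick $a^*\in A(x)$ (nonempty by hypothesis) and replace $A$ by $y\mapsto A(y+x)-a^*$, which is again monotone, has $0$ in the interior of its domain, and satisfies $0\in A(0)$. Choose $r>0$ with $rB_X\subseteq\dom A$ and set
\begin{equation*}
\phi(y):=\sup\big\{\langle z^*,\,y-z\rangle : (z,z^*)\in\gra A,\ \|z\|\leq r\big\}.
\end{equation*}
As a pointwise supremum of continuous affine functions, $\phi$ is convex and lower semicontinuous, and $\phi\geq 0$ globally since $(0,0)\in\gra A$.

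The crux is to show that $\phi$ is finite on $\halb rB_X$. For $\|y\|\leq\halb r$ the point $2y$ lies in $rB_X\subseteq\dom A$, so some $w^*\in A(2y)$ exists. Monotonicity against $(0,0)$ gives $\langle z^*,z\rangle\geq 0$, while monotonicity against $(2y,w^*)$ gives $\langle z^*,2y-z\rangle\leq\langle w^*,2y-z\rangle$. Combining these with the identity $2\langle z^*,y-z\rangle=\langle z^*,2y-z\rangle-\langle z^*,z\rangle$ yields
\begin{equation*}
2\langle z^*,y-z\rangle\;\leq\;\langle w^*,2y-z\rangle\;\leq\;\|w^*\|\cdot(2\|y\|+\|z\|)\;\leq\;2r\|w^*\|
\end{equation*}
uniformly in $(z,z^*)\in\gra A$ with $\|z\|\leq r$. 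Hence $\phi(y)\leq r\|w^*\|<\infty$ and $\halb rB_X\subseteq\dom\phi$. A proper convex lower semicontinuous function is continuous on the interior of its effective domain, so there exist $\rho\in(0,\halb r)$ and $M>0$ with $\phi\leq M$ on $\rho B_X$.

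To conclude, set $\delta:=\halb\rho$ and fix $(y,y^*)\in\gra A$ with $\|y\|\leq\delta$. Since $\delta<r$, the pair $(y,y^*)$ is eligible in the supremum defining $\phi$. For every $u\in X$ with $\|u\|=1$ the point $y+\halb\rho u$ lies in $\rho B_X$, and evaluating $\phi$ there using $(z,z^*)=(y,y^*)$ gives
\begin{equation*}
M\;\geq\;\phi(y+\halb\rho u)\;\geq\;\big\langle y^*,(y+\halb\rho u)-y\big\rangle\;=\;\tfrac{\rho}{2}\langle y^*,u\rangle.
\end{equation*}
Taking the supremum over unit $u$ produces $\|y^*\|\leq 2M/\rho=:K$, the required bound.

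The main obstacle is the finiteness step. A naive use of monotonicity controls $\langle z^*,y-z\rangle$ only through $\|z^*\|$, which is precisely the quantity one is trying to bound. The neat fix is to invoke monotonicity twice, against two carefully placed basepoints $(0,0)$ and $(2y,w^*)$, chosen so that the direction-dependent pieces cancel and the supremum collapses to a multiple of the single dual norm $\|w^*\|$, which depends only on $y$. Once this finiteness is in hand, continuity of convex functions at interior points of their effective domain, together with the elementary affine lower bound on $\phi$ at $y+\halb\rho u$, deliver the bound on $\|y^*\|$ with essentially no further work.
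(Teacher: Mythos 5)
The paper states this result as a cited Fact (Rockafellar; Phelps, Theorem~2.28) and supplies no proof of its own, so there is nothing internal to compare against; your argument is a correct rendition of the classical proof from those sources, using the auxiliary convex lower semicontinuous function $\phi$ and the two-basepoint monotonicity trick to get finiteness, then continuity of $\phi$ on the interior of its domain. The only cosmetic omission is that after undoing the normalization $A \mapsto A(\cdot+x)-a^*$ the final constant for the original operator is $K+\|a^*\|$ rather than $K$, which of course does not affect the conclusion.
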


Based on Fact~\ref{pheps:11}, we can develop a stronger result.

\begin{lemma}[Strong directional boundedness]\label{LemClC:1}
\emph{(See \cite[Lemma~4.1]{BY1}.)}
Let $A:X\rightrightarrows X^*$ be monotone and $x\in\inte\dom A$.
 Then there exist $\delta>0$ and  $M>0$ such that
 $x+2\delta B_X\subseteq\dom A$ and $\sup_{a\in x+2\delta B_X}\|Aa\|\leq M$.
Assume also that
$(x_0,x_0^*)$ is monotonically related to $\gra A$.
Then
 \begin{align*}\sup_{a\in\left[ x+\delta B_X,\,x_0\right[,\,
  a^*\in Aa}\|a^*\|\leq\frac{1}{\delta}\left(\|x_0-x\| +1\right)
\left(\|x^*_0\|+2M\right),
\end{align*}
where  $\left[x+\delta B_X,\,x_0\right[:=\big\{(1-t)y+tx_0\mid 0\leq t< 1, y\in x+\delta B_X\big\}$.
\end{lemma}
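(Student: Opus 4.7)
The plan is to test the monotonicity inequality at a point $(a,a^*)$ on the segment against two auxiliary points: the external point $(x_0,x_0^*)$ and a judiciously chosen nearby point $(b,b^*)\in\gra A$ inside the ball $x+2\delta B_X$, where the uniform bound $\|b^*\|\leq M$ from Fact~\ref{pheps:11} applies. Since the existential clause in the setup leaves us free to shrink $\delta$, I will assume without loss of generality that $\delta\leq 1$; the same $M$ continues to dominate $\|Aa\|$ on the smaller ball.

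Fix $a=(1-t)y+tx_0$ with $y\in x+\delta B_X$, $t\in[0,1)$, and $a^*\in Aa$. Since $(x_0,x_0^*)$ is monotonically related to $\gra A$ and $a-x_0=(1-t)(y-x_0)$, dividing by $1-t>0$ yields
\begin{equation*}
\langle y-x_0,\,a^*\rangle\geq\langle y-x_0,\,x_0^*\rangle.
\end{equation*}
For any $w\in B_X$, set $b:=y+\delta w\in x+2\delta B_X\subseteq\dom A$ and pick any $b^*\in Ab$, so $\|b^*\|\leq M$. The identity $a-b=-t(y-x_0)-\delta w$ combined with monotonicity of $A$ at $(a,a^*)$ and $(b,b^*)$ gives
\begin{equation*}
\delta\langle w,a^*\rangle \leq -t\langle y-x_0,\,a^*\rangle+t\langle y-x_0,\,b^*\rangle+\delta\langle w,b^*\rangle.
\end{equation*}

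Substituting the first inequality into the right-hand side, using $\|b^*\|\leq M$, and bounding $\|y-x_0\|\leq\|x_0-x\|+\delta\leq\|x_0-x\|+1$, I obtain
\begin{equation*}
\delta\langle w,a^*\rangle\leq t(\|x_0-x\|+1)(\|x_0^*\|+M)+\delta M.
\end{equation*}
Taking the supremum over $w\in B_X$ turns the left-hand side into $\delta\|a^*\|$. Dividing through by $\delta$, using $t<1$, and absorbing the remainder $M$ into the product via $M\leq\tfrac{1}{\delta}(\|x_0-x\|+1)M$ (valid because $\delta\leq 1\leq\|x_0-x\|+1$), gives
\begin{equation*}
\|a^*\|\leq\tfrac{1}{\delta}(\|x_0-x\|+1)(\|x_0^*\|+M)+M\leq\tfrac{1}{\delta}(\|x_0-x\|+1)(\|x_0^*\|+2M),
\end{equation*}
as claimed.

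The main conceptual step is the choice of the comparison point $b=y+\delta w$: it is placed near $y$ (so that $b\in x+2\delta B_X$, which is where the local boundedness supplied by Fact~\ref{pheps:11} provides the uniform bound $\|b^*\|\leq M$), and it is offset in the test direction $\delta w$ (so that monotonicity disgorges the term $\delta\langle w,a^*\rangle$ after the component along $y-x_0$ is tamed by the monotonic relation with $x_0$). Once this choice is made, the rest is careful bookkeeping of constants.
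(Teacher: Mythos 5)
Your proof is correct. The paper prints no proof of this lemma (it only cites \cite[Lemma~4.1]{BY1}), but your argument --- testing the monotonicity of $(a,a^*)$ against $(x_0,x_0^*)$ along the segment to control the component of $a^*$ in the direction $y-x_0$, and against the perturbed points $b=y+\delta w\in x+2\delta B_X$ where Fact~\ref{pheps:11} supplies the uniform bound $M$, after normalizing $\delta\le 1$ --- is exactly the standard route taken in that reference, and the constant bookkeeping (including the absorption of the trailing $M$ via $\delta\le 1\le\|x_0-x\|+1$) checks out.
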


The following result --- originally conjectured by the first author in \cite{Bor4}  --- was established by  Voisei in \cite[Theorem~37]{VoiseiIn} as part of a more complex set of results.

\begin{theorem}[Eventual boundedness]\label{bondednet:1}\emph{(See \cite[Theorem~4.1]{BY1}.)}
Let $A:X\rightrightarrows X^*$ be  monotone
 such that $\inte\dom A\neq\varnothing$.
Then every norm $\times$ weak$^*$ convergent net in $\gra A$ is eventually bounded.
\end{theorem}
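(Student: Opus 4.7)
\medskip
\noindent\textbf{Proof plan for Theorem~\ref{bondednet:1}.}

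Let $(a_\alpha,a^*_\alpha)_{\alpha\in\Gamma}$ be a net in $\gra A$ converging to some $(x_0,x^*_0)\in X\times X^*$ in norm $\times$ weak$^*$. The $a_\alpha$-coordinate is eventually norm-bounded by $\|x_0\|+1$, so the whole content is to bound $\|a^*_\alpha\|$ eventually. The plan is to use monotonicity against \emph{all} bounded selections from $\gra A$ near an interior point of $\dom A$, and then convert the resulting inequality into a norm bound on $a^*_\alpha$ via a supremum over unit vectors; the weak$^*$ convergence of $a^*_\alpha$ will be used precisely to control the inner-product term $\langle x_0-x,a^*_\alpha\rangle$ that remains.

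First I would fix $x\in\inte\dom A$ and invoke Fact~\ref{pheps:11} (local boundedness) to choose $\delta>0$ and $K>0$ with $x+\delta B_X\subseteq\dom A$ and $\|y^*\|\le K$ for every $y\in x+\delta B_X$ and every $y^*\in Ay$. For each $u\in B_X$, set $y:=x+\delta u$ and pick any $y^*\in Ay$; monotonicity of $A$ at $(a_\alpha,a^*_\alpha)$ and $(y,y^*)$ gives
\begin{equation*}
\langle a_\alpha-x-\delta u,\,a^*_\alpha\rangle \;\ge\; \langle a_\alpha-x-\delta u,\,y^*\rangle \;\ge\; -(\|a_\alpha-x\|+\delta)K.
\end{equation*}
Rearranging, $\delta\langle u,a^*_\alpha\rangle \le \langle a_\alpha-x,a^*_\alpha\rangle + (\|a_\alpha-x\|+\delta)K$, and since the right-hand side is independent of $u$, taking the supremum over $u\in B_X$ yields
\begin{equation*}
\delta\,\|a^*_\alpha\| \;\le\; \langle a_\alpha-x,\,a^*_\alpha\rangle + (\|a_\alpha-x\|+\delta)K.
\end{equation*}

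The next step is to split $\langle a_\alpha-x,a^*_\alpha\rangle=\langle a_\alpha-x_0,a^*_\alpha\rangle+\langle x_0-x,a^*_\alpha\rangle$. The second summand converges to $\langle x_0-x,x^*_0\rangle$ (this is where weak$^*$ convergence of $a^*_\alpha$ is used) and is therefore eventually bounded. The first summand is dominated by $\|a_\alpha-x_0\|\cdot\|a^*_\alpha\|$, where the scalar $\|a_\alpha-x_0\|\to 0$. Absorbing this term into the left-hand side for $\alpha$ large enough that $\|a_\alpha-x_0\|\le\delta/2$ produces
\begin{equation*}
\tfrac{\delta}{2}\,\|a^*_\alpha\| \;\le\; \langle x_0-x,a^*_\alpha\rangle + (\|a_\alpha-x\|+\delta)K,
\end{equation*}
whose right-hand side is eventually bounded, so $(\|a^*_\alpha\|)$ is eventually bounded as well.

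The main obstacle I anticipate is the conceptual one of turning weak$^*$ (not norm) convergence of the dual component into a norm estimate; a naive Banach--Steinhaus argument is not available for general nets in $X^*$. The monotonicity inequality, combined with the uniform bound on $Ay$ across a \emph{whole} ball $x+\delta B_X$ provided by Fact~\ref{pheps:11}, is exactly what lets us take the supremum over $u\in B_X$ and recover $\|a^*_\alpha\|$ on the left; the weak$^*$ limit $x^*_0$ then only has to control the linear quantity $\langle x_0-x,a^*_\alpha\rangle$, for which weak$^*$ convergence is precisely the right hypothesis. (One could alternatively run a contradiction argument by normalising a putatively unbounded subnet $a^*_\alpha/\|a^*_\alpha\|$ and passing to a weak$^*$ cluster point in $B_{X^*}$, but the direct estimate above is shorter and avoids subnet extraction.)
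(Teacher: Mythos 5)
Your argument is correct and complete, and it follows essentially the same route as the source the paper cites for this theorem (the survey itself only references \cite[Theorem~4.1]{BY1}, whose proof likewise combines Rockafellar's local boundedness, Fact~\ref{pheps:11}, with the monotonicity inequality tested against a full ball in $\inte\dom A$ and then a supremum over unit vectors, using weak$^*$ convergence only to control the residual linear term). The one point worth making explicit is that $\delta$ must be chosen small enough that both $x+\delta B_X\subseteq\dom A$ and the local bound $K$ hold simultaneously, which you do state; with that, every step checks out.
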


\begin{corollary}[Norm-weak$^*$ closed graph] \emph{(See \cite[Corollary~4.1]{BY1}.)}\label{CorCL:1}
Let $A:X\rightrightarrows X^*$ be maximally monotone
 such that $\inte\dom A\neq\varnothing$.
Then $\gra A$ is norm $\times$ weak$^*$ closed.
\end{corollary}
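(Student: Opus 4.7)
The plan is to use eventual boundedness to tame the cross term $\langle a_\alpha, a_\alpha^*\rangle$, after which the monotonicity inequality passes to the limit cleanly and maximal monotonicity finishes the argument.

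More precisely, suppose $(a_\alpha, a_\alpha^*)_{\alpha \in \Gamma}$ is a net in $\gra A$ with $a_\alpha \longrightarrow x$ in norm and $a_\alpha^* \weakstarly x^*$. I would first invoke Theorem~\ref{bondednet:1} (Eventual boundedness) to obtain $\alpha_0 \in \Gamma$ and $M \geq 0$ with $\|a_\alpha^*\| \leq M$ for all $\alpha \succeq_\Gamma \alpha_0$. Restricting attention to the cofinal sub-net indexed by $\alpha \succeq \alpha_0$, I now have a norm-bounded dual net, which is what makes the weak$^*$ convergence usable on products.

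Next, fix an arbitrary $(y, y^*) \in \gra A$. Monotonicity gives
\begin{align*}
0 \leq \langle a_\alpha - y, a_\alpha^* - y^* \rangle = \langle a_\alpha, a_\alpha^* \rangle - \langle a_\alpha, y^* \rangle - \langle y, a_\alpha^* \rangle + \langle y, y^* \rangle.
\end{align*}
The delicate term is $\langle a_\alpha, a_\alpha^* \rangle$; I would split it as
\begin{align*}
\langle a_\alpha, a_\alpha^* \rangle = \langle a_\alpha - x, a_\alpha^* \rangle + \langle x, a_\alpha^* \rangle,
\end{align*}
where the first summand satisfies $|\langle a_\alpha - x, a_\alpha^* \rangle| \leq M\|a_\alpha - x\| \to 0$ by norm convergence and eventual boundedness, while the second tends to $\langle x, x^* \rangle$ by weak$^*$ convergence. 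The remaining two cross terms are straightforward: $\langle a_\alpha, y^* \rangle \to \langle x, y^* \rangle$ since $y^* \in X^*$ is fixed and $a_\alpha$ converges in norm, and $\langle y, a_\alpha^* \rangle \to \langle y, x^* \rangle$ by the definition of weak$^*$ convergence. Passing to the limit yields $\langle x - y, x^* - y^* \rangle \geq 0$.

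Since $(y, y^*) \in \gra A$ was arbitrary, $(x, x^*)$ is monotonically related to $\gra A$, and maximal monotonicity forces $(x, x^*) \in \gra A$. The only nontrivial step is the control of $\langle a_\alpha, a_\alpha^*\rangle$, and that is exactly where Theorem~\ref{bondednet:1} does all the real work; everything else is bookkeeping about mixed norm/weak$^*$ convergence. Note that without the interior-of-domain hypothesis the net could fail to be eventually bounded, and this simple passage to the limit would break down.
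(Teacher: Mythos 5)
Your proof is correct and is essentially the paper's intended argument: Corollary~\ref{CorCL:1} is derived from Theorem~\ref{bondednet:1} exactly by using eventual boundedness to control the bilinear term $\langle a_\alpha, a_\alpha^*\rangle$ via the splitting $\langle a_\alpha - x, a_\alpha^*\rangle + \langle x, a_\alpha^*\rangle$, passing to the limit in the monotonicity inequality, and invoking maximality. Nothing to add.
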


\begin{example}[Failure of graph to be norm-weak$^*$ closed]\label{ex:notclosed}
In \cite{BFG}, the authors showed that the statement of Corollary~\ref{CorCL:1}
cannot hold without the assumption of the nonempty interior domain even for the
subdifferential operators --- actually it  fails in the bw$^*$ topology. More precisely (see \cite{BFG} or
\cite[Example~21.5]{BC2011}):
 Let $f:\ell^2(\NN)\rightarrow\RX$ be defined by
 \begin{align}
 x\mapsto\max\big\{1+\langle x, e_1\rangle,\sup_{2\leq n\in\NN}
 \langle x, \sqrt{n}e_n\rangle\big\},\label{NormToEx:1}
 \end{align}
 where $e_n:=(0,\ldots,0,1,0,\cdots,0):$ the $n$th entry is $1$ and the others
are $0$.
 Then $f$ is proper lower semicontinuous and convex,
 but $\partial f$ is not norm $\times$ weak$^*$ closed.
 A more general construction in an infinite-dimensional
  Banach space $E$ is also given in \cite[Section~3]{BFG}. It is as follows:

  Let $Y$ be an infinite dimensional separable subspace of $E$, and $(v_n)_{n\in\NN}$ be
  a \emph{normalized Markushevich basis} of $Y$ with the dual coefficients $(v^*_n)_{n\in\NN}$.
  We defined $v_{p,m}$ and $v^*_{p,m}$ by
  \begin{align*}
  v_{p,m}:=\frac{1}{p}(v_p+v_{p^m})\quad\text{and}\quad
  v^*_{p,m}:=v_p^*+(p-1)v^*_{p^m},\quad m\in\NN,\, p\,\text{is prime}.
  \end{align*}
  Let $f: E\rightarrow\RX$ be defined by
 \begin{align}
 x\mapsto\iota_Y (x)+\max\big\{1+\langle x, v^*_1\rangle,\sup_{2\leq m\in\NN,\, p\,\text{is prime}}
 \langle x, v^*_{p,m}\rangle\big\}.
 \end{align}
  Then $f$ is proper lower semicontinuous and convex. We have that
$\partial f$ is not norm $\times$ bw$^*$ closed
 and hence $\partial f$ is not norm $\times$ weak$^*$ closed.
\qede
\end{example}

Let $A:X\rightrightarrows X^*$.
Following \cite{Hou}, we say $A$ has the upper-semicontinuity property \emph{property (Q)}  if
for every net $(x_{\alpha})_{\alpha\in J}$ in
$X$ such that $x_{\alpha}\longrightarrow x$, we have
\begin{align}\label{propQ}
\bigcap_{\alpha\in J}\overline{\conv\left[
\bigcup_{\beta\succeq_J\alpha} A (x_{\beta})\right]}^{\wk}
\subseteq Ax.
\end{align}

Let $A:X\To X^*$ be  monotone with $\dom A\neq\varnothing$ and consider a set $S\subseteq\dom A$.
 We define $A_S:X\To X^*$
by
\begin{align}
\gra A_{S}&=\overline{\gra A\cap(S\times X^*)}^{\|\cdot\|\times\wk}\nonumber\\
&=\big\{(x,x^*)\mid \exists\, \text{a net}\,
(x_{\alpha},x^*_{\alpha})_{\alpha\in\Gamma}\,
\text{in}\,\gra A\cap (S\times X^*)\, \text{such that}\, x_{\alpha}\longrightarrow x,
x^*_{\alpha}\weakstarly x^*\big\}\label{Deintcl}.
\end{align}
If $\inte\dom A\neq\varnothing$, we denote by
$A_{\inte}:=A_{\inte\dom A}$. We  note that $\gra A_{\dom A}=\overline{\gra A}^{\|\cdot\|\times\wk} \supseteq\gra A$ while $\gra A_S \subseteq \gra A_T$ for  $S \subseteq T$.

We now turn to consequences of these  boundedness results. The
following is the key technical proposition of this section.

\begin{proposition}\label{LemClC:3}\emph{(See \cite[Proposition~5.2]{BY1}.)}
Let $A:X\rightrightarrows X^*$ be maximally monotone
with $S \subseteq \inte\dom A\neq\varnothing$ such that $S$ is dense in $\inte\dom A$. Assume that
$x\in \dom A$ and  $v\in H_{\overline{\dom A}}(x) = \inte T_{\overline{\dom A}}(x)$.
Then there exists $x^*_0\in A_{S}(x)$ such that
\begin{align}
\sup\big\langle A_{S}(x), v\big\rangle=\big\langle x^*_0,v\big\rangle
=\sup\big\langle Ax,v\big\rangle.
\label{LETS:1}
\end{align}
In particular, $\dom A_S=\dom A$.
 \end{proposition}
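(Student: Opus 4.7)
The plan is to observe that $\gra A_S\subseteq\overline{\gra A}^{\|\cdot\|\times\wk}=\gra A$ by Corollary~\ref{CorCL:1} (which applies because $\inte\dom A\neq\varnothing$), so $A_S(x)\subseteq Ax$ and $\sup\langle A_S(x),v\rangle\leq\sup\langle Ax,v\rangle$ is automatic. All the work is then to produce a single $x_0^*\in A_S(x)$ satisfying $\langle x_0^*,v\rangle\geq\sup\langle Ax,v\rangle$; once available, the inclusion $x_0^*\in A_S(x)\subseteq Ax$ forces equality throughout \eqref{LETS:1}.

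To construct such an $x_0^*$ I would exploit hypertangency together with the strong directional boundedness of Lemma~\ref{LemClC:1}. Pick $t_0>0$ so small that $\bar x:=x+t_0 v\in\inte\dom A$; by Fact~\ref{pheps:11} choose $\delta>0$ and $M>0$ with $\bar x+2\delta B_X\subseteq\dom A$ and $\|Aa\|\leq M$ for every $a\in\bar x+2\delta B_X$. Fix any $z^*\in Ax$, which exists since $x\in\dom A$; then $(x,z^*)\in\gra A$ is trivially monotonically related to $\gra A$. Now choose $t_n\downarrow 0$ with $t_n<t_0$ and $\epsilon_n\downarrow 0$ with $\epsilon_n t_0<\delta$: hypertangency places $x+t_n v$ inside $\inte\dom A$, so by density of $S$ one can select $y_n\in S$ with $\|y_n-(x+t_n v)\|<\epsilon_n t_n$. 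A direct rearrangement,
\begin{align*}
y_n=\Bigl(1-\tfrac{t_n}{t_0}\Bigr)\,x+\tfrac{t_n}{t_0}\,z_n,\qquad z_n:=\bar x+\tfrac{t_0}{t_n}\bigl(y_n-x-t_n v\bigr),
\end{align*}
gives $\|z_n-\bar x\|<\epsilon_n t_0<\delta$, so $y_n\in[\bar x+\delta B_X,\,x[\,$ and Lemma~\ref{LemClC:1} (with $x_0=x$ and $x_0^*=z^*$) supplies a constant $C>0$, independent of $n$, such that $\|y_n^*\|\leq C$ whenever $y_n^*\in Ay_n$.

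A short monotonicity computation then closes the argument. For any $x^*\in Ax$ and any $y_n^*\in Ay_n$,
\begin{align*}
0\leq\langle y_n-x,\,y_n^*-x^*\rangle=t_n\,\langle v,\,y_n^*-x^*\rangle+\langle y_n-x-t_n v,\,y_n^*-x^*\rangle,
\end{align*}
so dividing by $t_n>0$ and using $\|y_n-x-t_n v\|/t_n<\epsilon_n$ with $\|y_n^*\|\leq C$ yields $\langle v,y_n^*\rangle\geq\langle v,x^*\rangle-\epsilon_n(C+\|x^*\|)$, hence $\liminf_n\langle v,y_n^*\rangle\geq\sup\langle Ax,v\rangle$. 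By Banach--Alaoglu the bounded sequence $(y_n^*)$ admits a weak$^*$ cluster point $x_0^*$, and since the corresponding subnet of $(y_n,y_n^*)$ lies in $\gra A\cap(S\times X^*)$ and converges to $(x,x_0^*)$ in norm $\times$ weak$^*$, we get $x_0^*\in A_S(x)$ together with $\langle v,x_0^*\rangle\geq\sup\langle Ax,v\rangle$, attaining \eqref{LETS:1} at $x_0^*$.

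The inclusion $\dom A_S\subseteq\dom A$ is the same consequence $A_S(x)\subseteq Ax$; conversely, for any $x\in\dom A$ the convexity of $\overline{\dom A}$ with $\inte\dom A\neq\varnothing$ gives $H_{\overline{\dom A}}(x)\supseteq\inte\dom A-x\neq\varnothing$, so the construction above applies and yields $A_S(x)\neq\varnothing$, i.e., $x\in\dom A_S$. The main obstacle I anticipate is the geometric bookkeeping that places each $y_n$ into the precise cone $[\bar x+\delta B_X,\,x[\,$ required by Lemma~\ref{LemClC:1}: the density approximation must control $\|y_n-(x+t_n v)\|$ not by a fixed small number but by the product $\epsilon_n t_n$, so that after renormalizing by the segment parameter $t_n/t_0$ the perturbation stays inside $\delta B_X$ and the uniform bound on $\|y_n^*\|$ is independent of $n$; without this, the weak$^*$ cluster point need not exist.
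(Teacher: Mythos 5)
Your argument is correct: the reduction via $\gra A_S\subseteq\overline{\gra A}^{\|\cdot\|\times\wk}=\gra A$, the approach to $x$ along the hypertangent direction through points of $S$ with the error controlled by $\epsilon_n t_n$, the uniform bound from Lemma~\ref{LemClC:1} on the cone $\left[\bar x+\delta B_X,\,x\right[$, and the weak$^*$ cluster-point plus monotonicity estimate all check out, and together they also yield the finiteness of $\sup\langle Ax,v\rangle$ and the equality $\dom A_S=\dom A$. The survey itself omits the proof, deferring to \cite[Proposition~5.2]{BY1}, but your route (directional boundedness, weak$^*$ compactness, and the monotonicity inequality divided by $t_n$) is essentially the argument used there, so there is nothing genuinely different to report.
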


\begin{corollary}\label{CorCL:4}\emph{(See \cite[Corollary~5.1]{BY1}.)}
Let $A:X\rightrightarrows X^*$ be maximally monotone with
$S \subseteq \inte\dom A\neq\varnothing$. For any  $S$  dense in $\inte\dom A$,
we have   $\overline{\conv\left[ A_S(x)\right]}^{\wk}=Ax=A_{\inte}(x), \forall x\in\inte\dom A$.
\end{corollary}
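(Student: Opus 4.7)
The plan is to prove the two equalities $Ax = A_{\inte}(x)$ and $\overline{\conv[A_S(x)]}^{\wk} = Ax$ separately, with the first being essentially bookkeeping and the second being a support-function argument fueled by Proposition~\ref{LemClC:3}.

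First I would establish $Ax = A_{\inte}(x)$ for $x\in\inte\dom A$. The inclusion $A_{\inte}(x)\subseteq Ax$ is immediate from Corollary~\ref{CorCL:1}: since $\inte\dom A\neq\varnothing$, $\gra A$ is norm$\times$weak$^*$ closed, and $A_S \subseteq A_{\dom A}$ gives $\gra A_{\inte}\subseteq\overline{\gra A}^{\|\cdot\|\times\wk}=\gra A$. The reverse inclusion is trivial: for any $x^*\in Ax$ with $x\in\inte\dom A$, the constant net $(x,x^*)$ already lies in $\gra A\cap(\inte\dom A\times X^*)$, so $x^*\in A_{\inte}(x)$. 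The same monotonicity in $S$ yields $A_S(x)\subseteq A_{\inte}(x)=Ax$, and since $Ax$ is convex and weak$^*$ closed (it is the intersection over $(y,y^*)\in\gra A$ of the weak$^*$ closed half-spaces $\{x^*\mid\langle x-y,x^*\rangle\geq\langle x-y,y^*\rangle\}$), we obtain one inclusion
\begin{align*}
\overline{\conv[A_S(x)]}^{\wk}\subseteq Ax.
\end{align*}

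For the reverse inclusion, I would argue by support functions. Both $Ax$ and $\overline{\conv[A_S(x)]}^{\wk}$ are nonempty (using $\dom A_S=\dom A$ from Proposition~\ref{LemClC:3}), convex, and weak$^*$ closed subsets of $X^*$; therefore they coincide iff their support functions on $X$ (i.e., evaluation against elements of $X$, the predual dictating the weak$^*$ topology) agree. The key observation is that since $x\in\inte\dom A\subseteq\inte\overline{\dom A}$, the tangent cone $T_{\overline{\dom A}}(x)$ equals all of $X$, hence $H_{\overline{\dom A}}(x)=\inte T_{\overline{\dom A}}(x)=X$. Proposition~\ref{LemClC:3} therefore applies to \emph{every} $v\in X$, giving
\begin{align*}
\sup\langle\overline{\conv[A_S(x)]}^{\wk},v\rangle=\sup\langle A_S(x),v\rangle=\sup\langle Ax,v\rangle\quad\forall v\in X.
\end{align*}
The support-function equality, combined with the Hahn-Banach separation theorem applied in the weak$^*$ topology of $X^*$ (whose continuous dual is $X$), forces $\overline{\conv[A_S(x)]}^{\wk}=Ax$, completing the proof.

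The only real obstacle is supplying the supremum identity, and that is handled precisely by Proposition~\ref{LemClC:3}; the fact that $x$ lies in the interior of $\dom A$ is exactly what promotes every direction $v$ to a hypertangent direction, allowing the proposition to be applied without restriction. Everything else reduces to standard duality: weak$^*$-closed convex sets in $X^*$ are determined by their $X$-valued support functionals.
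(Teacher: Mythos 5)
Your proof is correct and follows the route the paper intends: the survey states this corollary without proof (citing \cite[Corollary~5.1]{BY1}), but places Proposition~\ref{LemClC:3} immediately before it precisely so that the support-function/separation argument you give --- together with Corollary~\ref{CorCL:1} to get $Ax=A_{\inte}(x)$ and the weak$^*$ closedness of $\gra A_S\subseteq\gra A$, and Fact~\ref{f:referee02c} to see that $N_{\overline{\dom A}}(x)=\{0\}$ and hence $H_{\overline{\dom A}}(x)=\inte T_{\overline{\dom A}}(x)=X$ at interior points --- yields the result. I see no gaps: every hypothesis of Proposition~\ref{LemClC:3} is verified, both sets are nonempty weak$^*$ closed convex subsets of $X^*$, and equality of their support functionals over the predual $X$ does force equality.
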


There are many possible extensions of this sort of result along the lines studied in \cite{BFK}.
Applying Proposition~\ref{LemClC:3} and Lemma~\ref{LemClC:1}, we can also quickly recapture \cite[Theorem~2.1]{Aus93}.

\begin{theorem}[Directional boundedness in Euclidean space] \label{Ausmain:1}\emph{(See \cite[Theorem~5.1]{BY1}.)}
Suppose that $X$ is finite-dimensional. Let $A:X\rightrightarrows X^*$ be maximally monotone
and $x\in\dom A$. Assume that there exist $d\in X$ and $\varepsilon_0>0$
such that $x+\varepsilon_0 d\in\inte\dom A$. Then
\[\left[Ax\right]_d:=\big\{x^*\in Ax \mid \langle x^*,d\rangle=\sup\langle Ax,d\rangle\big\}\]
is nonempty and compact. Moreover, if a sequence $(x_n)_{n\in\NN}$ in $\dom A$ is such that
$x_n\longrightarrow x$ and
\begin{align}
\lim\frac{x_n-x}{\|x_n-x\|}=d,\label{NWC:7}
\end{align}
 then for every $\varepsilon>0$, there exists $N\in\NN$ such that
 \begin{align}
 A(x_n)\subseteq \left[Ax\right]_d+\varepsilon B_{X^*},\quad \forall n\geq N.
 \label{NWC:8}
 \end{align}
\end{theorem}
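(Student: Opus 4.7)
The plan divides into two stages: establishing that $[Ax]_d$ is a nonempty compact subset of $Ax$, and then proving the tangential convergence property. For the first stage I verify that $d \in H_{\overline{\dom A}}(x)$. Since $x + \varepsilon_0 d + \rho B_X \subseteq \dom A$ for some $\rho > 0$ and $\overline{\dom A}$ is convex (because $\inte \dom A \neq \varnothing$), for every $v$ with $\|v\| < \rho/\varepsilon_0$ the segment from $x$ to $x + \varepsilon_0(d+v)$ lies in $\overline{\dom A}$; hence $d + v \in T_{\overline{\dom A}}(x)$ and $d$ is an interior tangent direction. Applying Proposition~\ref{LemClC:3} with $S = \inte \dom A$ produces $x_0^* \in A_{\inte}(x)$ realizing $\langle x_0^*, d\rangle = \sup \langle Ax, d\rangle =: s$. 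The norm-weak$^*$ closed graph property (Corollary~\ref{CorCL:1}) yields $A_{\inte}(x) \subseteq Ax$, so $x_0^* \in [Ax]_d$, giving nonemptiness.

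For compactness only boundedness is needed, since $[Ax]_d = Ax \cap \{x^* : \langle x^*, d\rangle = s\}$ is the intersection of two closed sets. Using local boundedness (Fact~\ref{pheps:11}) at $y := x + \varepsilon_0 d$, pick $\delta, M > 0$ with $y + \delta B_X \subseteq \dom A$ and $\sup\{\|a^*\| : a \in y + \delta B_X,\, a^* \in Aa\} \leq M$. For any $x^* \in [Ax]_d$, any $w$ with $\|w\|\leq \delta$, and any $z^* \in A(y+w)$, monotonicity $\langle (y+w) - x, z^* - x^*\rangle \geq 0$ together with $\langle d, x^*\rangle = s$ rearranges to $\varepsilon_0 s + \langle w, x^*\rangle \leq (\varepsilon_0\|d\| + \delta)M$; taking the supremum over $\|w\| \leq \delta$ bounds $\|x^*\|$ uniformly on $[Ax]_d$.

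For the second conclusion I argue by contradiction: suppose there exist $\varepsilon > 0$, a subsequence (still denoted $(x_n)$), and $x_n^* \in A(x_n)$ with $x_n^* \notin [Ax]_d + \varepsilon B_{X^*}$. Write $x_n = x + \tau_n e_n$ with $\tau_n := \|x_n - x\| > 0$ and $e_n \to d$ (the direction hypothesis forces $x_n \neq x$). A direct verification shows $x_n = (1 - t_n)(y + v_n) + t_n x$ for $t_n := 1 - \tau_n/\varepsilon_0$ and $v_n := \varepsilon_0(e_n - d)$; since $\|v_n\| \to 0$ and $\tau_n \to 0$, for $n$ large $\|v_n\| \leq \delta$ and $0 \leq t_n < 1$, so $x_n \in [y + \delta B_X, x[$. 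Fixing any $x^* \in Ax$, Lemma~\ref{LemClC:1} with center $y$ and reference pair $(x, x^*)$ then bounds $\|x_n^*\|$ uniformly. By finite-dimensionality, pass to a subsequence with $x_n^* \to z^*$; Corollary~\ref{CorCL:1} forces $z^* \in Ax$. For any $x_0^* \in Ax$, monotonicity gives $\langle e_n, x_n^* - x_0^*\rangle \geq 0$, and letting $n \to \infty$ yields $\langle d, z^*\rangle \geq \langle d, x_0^*\rangle$; hence $\langle d, z^*\rangle \geq s$, while $z^* \in Ax$ gives the reverse inequality, so $z^* \in [Ax]_d$. This contradicts $x_n^* \notin [Ax]_d + \varepsilon B_{X^*}$. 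The only delicate step is the parametrization placing $x_n$ into the open segment $[y + \delta B_X, x[$ for large $n$, which is exactly what enables Lemma~\ref{LemClC:1}; the remainder is monotonicity bookkeeping combined with the closed-graph property from Corollary~\ref{CorCL:1}.
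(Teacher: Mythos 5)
Your argument is correct and follows precisely the route the paper indicates for this result (which it only cites from \cite[Theorem~5.1]{BY1}): Proposition~\ref{LemClC:3} applied with $d\in H_{\overline{\dom A}}(x)$ for nonemptiness, local boundedness at $x+\varepsilon_0 d$ for compactness, and Lemma~\ref{LemClC:1} together with the segment parametrization $x_n=(1-t_n)(y+v_n)+t_n x$ and the norm$\times$weak$^*$ closed graph from Corollary~\ref{CorCL:1} for the convergence statement. The only cosmetic adjustment is that Lemma~\ref{LemClC:1} is stated with a $2\delta$-ball, so the radius should be halved before invoking it; this changes nothing.
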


\begin{theorem}[Reconstruction of $A$, I]\label{TheFom:2}\emph{(See \cite[Theorem~5.2]{BY1}.)}
Let $A:X\rightrightarrows X^*$ be maximally monotone with $S \subseteq \inte\dom A\neq\varnothing$ and  with $S$ dense in $\inte\dom A$.
Then
\begin{align}
Ax&=N_{\overline{\dom A}}(x)+\overline{\conv\left[A_{S}(x)\right]}^{\wk}
,\label{ThNp:S1}
 \quad \forall x\in X.
\end{align}

\end{theorem}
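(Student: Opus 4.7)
My plan is to prove the two inclusions separately. First I would dispose of the cases where the identity is vacuous: if $x\notin\overline{\dom A}$, then $A_S(x)=\varnothing$ (any norm-limit from $S\subseteq\overline{\dom A}$ lies in $\overline{\dom A}$) and both sides are empty; if $x\in\overline{\dom A}\setminus\dom A$, then $A_S(x)=\varnothing$ too (else, by Theorem~\ref{bondednet:1} and maximality, the limit would lie in $\gra A$), so both sides are again empty. For $x\in\inte\dom A$ the normal cone is $\{0\}$ and the identity collapses to Corollary~\ref{CorCL:4}. The genuine case is $x\in\dom A\cap\bd\overline{\dom A}$.

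For the inclusion $\supseteq$ I would first show $A_S(x)\subseteq Ax$: a norm $\times$ weak$^*$ convergent net $(x_\alpha,x_\alpha^*)\in\gra A\cap(S\times X^*)$ with limit $(x,x^*)$ is eventually bounded by Theorem~\ref{bondednet:1}, so the monotonicity inequality $\langle x_\alpha-y,x_\alpha^*-y^*\rangle\geq 0$ for $(y,y^*)\in\gra A$ passes to the limit, and maximality forces $x^*\in Ax$. Since $Ax$ is convex and weak$^*$-closed (as an intersection of weak$^*$-closed half-spaces indexed by $\gra A$), we obtain $\overline{\conv[A_S(x)]}^{\wk}\subseteq Ax$. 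A direct verification of monotone relatedness combining $\langle y-x,n^*\rangle\leq 0$ for $y\in\overline{\dom A}$ with monotonicity shows $N_{\overline{\dom A}}(x)+Ax\subseteq Ax$, completing the $\supseteq$ inclusion.

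The reverse direction rests on support functions. Set $C:=\overline{\conv[A_S(x)]}^{\wk}$, $N:=N_{\overline{\dom A}}(x)$, $T:=T_{\overline{\dom A}}(x)$, $K:=N+C$, so $\sigma_K=\sigma_N+\sigma_C=\iota_T+\sigma_C$ (the polar of $N$ being $T$). For $v\in\inte T=H_{\overline{\dom A}}(x)$, Proposition~\ref{LemClC:3} yields $\sigma_{Ax}(v)=\sigma_C(v)=\sigma_K(v)$. For $v\notin T$ both $\sigma_K(v)$ and $\sigma_{Ax}(v)$ are $+\infty$ (the latter since $Ax\supseteq a^*+N$ for any $a^*\in Ax$ and $\sigma_N(v)=+\infty$ off $T$). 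For boundary directions $v\in T\setminus\inte T$, I would interpolate with a fixed $w\in\inte T$ via $v_t:=(1-t)v+tw$; the standard convex segment property puts $v_t\in\inte T$ for $t\in(0,1]$, whence $\sigma_{Ax}(v_t)=\sigma_C(v_t)$; using lower semicontinuity of $\sigma_{Ax}$, convexity of $\sigma_C$, and the finiteness $\sigma_C(w)<\infty$ (itself derived from Fact~\ref{pheps:11}: $x+sw\in\inte\dom A$ for small $s>0$ gives local boundedness of $A$ there, and monotonicity transports this to a bound on $\sup\langle w,Ax\rangle$), passing $t\to 0^+$ delivers $\sigma_{Ax}(v)\leq\sigma_C(v)=\sigma_K(v)$. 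Combining with the reverse inequality from $K\subseteq Ax$ gives $\sigma_{Ax}=\sigma_K$, hence $Ax=\overline{K}^{\wk}$.

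The hard part will be upgrading $Ax=\overline{K}^{\wk}$ to $Ax=K$, i.e., verifying that $K=N+C$ is itself already weak$^*$-closed, since sums of weak$^*$-closed convex sets need not be weak$^*$-closed in general. The structural fact I expect to exploit is the recession containment $\rec(C)\subseteq N$: any weak$^*$-recession direction $v$ of $C$ satisfies $\langle v,w\rangle\leq 0$ for every $w$ with $\sigma_C(w)<\infty$, and the finiteness of $\sigma_C$ throughout $\inte T$ pins such $v$ inside $(\inte T)^\circ=N$. Combined with the directional boundedness from Lemma~\ref{LemClC:1}, this recession containment should let one split any norm-bounded weak$^*$-convergent net from $N+C$ into convergent subnets in $N$ and $C$ separately, yielding $K=\overline{K}^{\wk}$ and completing the proof.
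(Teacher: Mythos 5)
Your reduction to the case $x\in\dom A\cap\bd\,\overline{\dom A}$, the inclusion $N_{\overline{\dom A}}(x)+\overline{\conv\left[A_{S}(x)\right]}^{\wk}\subseteq Ax$, and the support-function identity $\sigma_{Ax}=\iota_{T}+\sigma_{C}$ (with $T:=T_{\overline{\dom A}}(x)$, $C:=\overline{\conv\left[A_{S}(x)\right]}^{\wk}$), including the interpolation that handles boundary directions of $T$, are all correct and use Theorem~\ref{bondednet:1}, Proposition~\ref{LemClC:3} and Fact~\ref{pheps:11} exactly as they are meant to be used. The genuine gap is the step you yourself flag as the hard part: upgrading $Ax=\overline{N+C}^{\wk}$ to $Ax=N+C$. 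The mechanism you propose --- $\rec(C)\subseteq N$ plus ``splitting a norm-bounded weak$^{*}$-convergent net from $N+C$ into convergent subnets in $N$ and $C$ separately'' --- does not work outside finite dimensions. A bounded net $n_{\alpha}^{*}+c_{\alpha}^{*}$ gives no control on $\|n_{\alpha}^{*}\|$ and $\|c_{\alpha}^{*}\|$ individually, and the recession argument that would restore such control (normalize $n_{\alpha}^{*}$, extract a nonzero cluster direction, invoke $N\cap(-N)=\{0\}$) fails because a weak$^{*}$ cluster point of $n_{\alpha}^{*}/\|n_{\alpha}^{*}\|$ may be $0$. Sums of two weak$^{*}$-closed convex cones with trivial lineality and no nontrivial opposite recession directions can fail to be weak$^{*}$-closed in infinite dimensions, so the recession containment alone cannot close the argument.

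The repair is available from facts you have already established. You showed that $\sigma_{C}$ is finite --- indeed bounded above on a ball around each point of $\inte T$, via the local-boundedness transport from Fact~\ref{pheps:11} --- so $\iota_{T}$ is continuous at a point of $\dom\sigma_{C}$. The Fenchel duality theorem (Rockafellar/Attouch--Br\'ezis) applied to the proper lower semicontinuous convex functions $\iota_{T}$ and $\sigma_{C}$ on $X$ then gives $(\iota_{T}+\sigma_{C})^{*}=\sigma_{T}\Box(\sigma_{C})^{*}=\iota_{N}\Box\iota_{C}$ with the infimal convolution \emph{exact}, i.e.\ equal to $\iota_{N+C}$; since any conjugate is automatically weak$^{*}$ lower semicontinuous, this shows in one stroke that $N+C$ is weak$^{*}$-closed, and conjugating your identity $\sigma_{Ax}=\iota_{T}+\sigma_{C}$ yields $\iota_{Ax}=\iota_{N+C}$, that is, $Ax=N+C$. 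With that replacement for your final paragraph the proof is complete.
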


\begin{remark}{(See \cite[Remark~5.4]{BY1}.)}
If $X$ is a \emph{weak Asplund space} (as holds if $X$ has a G\^{a}teaux smooth equivalent norm, see \cite{ph,PPN,BFK}),
the nets defined in $A_{S}$  in
Proposition~\ref{LemClC:3} and Theorem~\ref{TheFom:2} can be replaced by sequences.
\qede
\end{remark}

In various classes of Banach space we can choose useful structures for $S\in S_A$, where
\begin{align*}
S_A:=\big\{
S\subseteq\inte\dom A\mid\text{$S$\, is dense in  $\inte\dom A$}\big\}.
\end{align*}
By \cite{Georg,ph,PPN,Ves1,Ves2,RockWets,BorVan}, we have multiple selections for $S$ (see below).
\begin{corollary}[Specification of $S_A$]\label{cor:cases}\emph{(See \cite[Corollary~5.2]{BY1}.)} Let $A:X\rightrightarrows X^*$ be maximally monotone
with $\inte\dom A\neq\varnothing$. We may choose the dense set $S\in S_A$ to be as follows:
\begin{enumerate}
\item\label{Cor:Ca1} In a G\^{a}teaux smooth space, entirely within the residual set of non-$\sigma$ porous points of $\dom A$,
\item\label{Cor:Ca2} In an Asplund space, to include only a subset of the generic set of points of single-valuedness and  norm to norm continuity of $A$,
\item\label{Cor:Ca3} In a separable Asplund space, to hold only  countably many angle-bounded points of $A$,
\item \label{Cor:Ca4}In  a weak Asplund space,  to include only a subset of the generic set of  points of single-valuedness (and norm to weak$^*$ continuity) of $A$,
    \item \label{Cor:Ca4b}In  a separable space,  to include  only  points of single-valuedness (and norm to weak$^*$ continuity) of $A$ whose complement is covered by a countable union of Lipschitz surfaces.
\item\label{Cor:Ca5} In finite dimensions, to use sets of full measure including only points of differentiability of $A$ (almost everywhere) \cite[Corollary~12.66(a), page~571]{RockWets}.
\end{enumerate}
\end{corollary}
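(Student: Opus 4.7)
\noindent\textit{Proof proposal.} The plan is to read off each of the six items as a density statement: in each listed class of Banach space, the ``nice'' subset $S$ that the corollary asks us to use is already known to be dense in $\inte\dom A$, and therefore belongs to $S_A$. Thus the corollary is a packaging of known genericity/smoothness theorems for maximally monotone operators restricted to the interior of their domain, combined with the definition
\begin{align*}
S_A=\big\{S\subseteq\inte\dom A\mid S \text{ is dense in } \inte\dom A\big\}.
\end{align*}
The anchoring observation is that, by Fact~\ref{pheps:11}, $A$ is locally bounded at every point of $\inte\dom A$, so that all the classical genericity results for locally bounded maximal monotone operators apply on the open set $\inte\dom A$.

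Next I would treat the items in turn. For \ref{Cor:Ca1}, in a G\^ateaux smooth space, the set of points of $\inte\dom A$ at which $\inte\dom A$ is not $\sigma$-porous is residual (hence dense); intersecting with the residual set of points of single-valuedness of $A$ (Preiss--Phelps--Namioka, cf.~\cite{ph,PPN}) still yields a dense subset of $\inte\dom A$, which we take as $S$. For \ref{Cor:Ca2}, in an Asplund space every maximally monotone operator that is locally bounded on an open set is generically single-valued and norm-to-norm continuous there (Kenderov--Preiss, cf.~\cite{ph,PPN}); the dense $G_\delta$ of such points is a valid $S$. For \ref{Cor:Ca4}, the analogous generic single-valuedness and norm-to-weak$^*$ continuity theorem for weak Asplund spaces (Fabian, cf.~\cite{ph,PPN,BFK}) supplies a dense $G_\delta$ of good points. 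Items \ref{Cor:Ca1}, \ref{Cor:Ca2}, \ref{Cor:Ca4} are then immediate by taking $S$ to be (a subset of) this residual set intersected with $\inte\dom A$.

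For \ref{Cor:Ca3}, in a separable Asplund space Georgiev's construction (see \cite{Georg}) produces a \emph{countable} angle-bounded subset of $\inte\dom A$ that is dense there; I take $S$ to be exactly this countable set. For \ref{Cor:Ca4b}, a separable Banach space admits a Preiss--Zaj\'\i{}\v{c}ek-style description: outside a countable union of Lipschitz surfaces, every locally bounded maximally monotone operator is single-valued and norm-to-weak$^*$ continuous (see \cite{Ves1,Ves2}); the complement of such a countable union is dense, giving the required $S$. For \ref{Cor:Ca5}, in finite dimensions Mignot's differentiability theorem (or the Alexandrov-type statement for monotone operators cited in \cite[Corollary~12.66(a)]{RockWets}) gives differentiability of $A$ Lebesgue-almost everywhere on $\inte\dom A$; any full-measure subset is dense, so $S$ can be chosen inside the set of differentiability points.

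The main obstacle here is not any single computation but ensuring that each named set genuinely sits inside $\inte\dom A$ and is dense there; once that is verified, membership in $S_A$ is automatic and Theorem~\ref{TheFom:2} then yields the structure formula with the sharper choice of $S$. I expect the most delicate case to be \ref{Cor:Ca4b}, where one must marry the Zaj\'\i{}\v{c}ek-type Lipschitz-surface exceptional set with the restriction to $\inte\dom A$ and argue that the countable union of such surfaces cannot cover any open subset of a separable Banach space, so the complement intersected with $\inte\dom A$ is indeed dense. All six items then follow by a single invocation of the definition of $S_A$.
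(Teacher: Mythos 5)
Your proposal is correct and matches the paper's own treatment: the paper offers no separate argument for this corollary beyond citing \cite{Georg,ph,PPN,Ves1,Ves2,RockWets,BorVan} and \cite[Corollary~5.2]{BY1}, i.e.\ it, like you, reads each item as a known genericity/differentiability theorem producing a dense subset of $\inte\dom A$ (where Fact~\ref{pheps:11} gives local boundedness), so that membership in $S_A$ is immediate from its definition.
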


These classes are sufficient but not necessary: for example, there are Asplund spaces with no equivalent G\^{a}teaux  smooth renorm \cite{BorVan}.
Note also that in \ref{Cor:Ca4b} and \ref{Cor:Ca5} we  also know that $A \diagdown S$ is a null set in the senses discussed in \cite{CheZha}.

We now restrict attention to convex functions.

\begin{corollary}[Convex subgradients]\label{CorSubd}\emph{(See \cite[Corollary~5.3]{BY1}.)}
Let $f:X\rightarrow\RX$ be proper lower semicontinuous and
 convex with $\inte\dom f\neq\varnothing$.  Let $S \subseteq \inte \dom f$ be given with $S$ dense in $\dom f$.
Then
\begin{align*}
\partial f(x)=N_{\overline{\dom f}}(x)
+\overline{\conv\left[{(\partial f)}_{S}(x)\right]}^{\wk}=N_{\dom f}(x)
+\overline{\conv\left[(\partial f)_{S}(x)\right]}^{\wk}, \quad \forall x\in X.
\end{align*}
\end{corollary}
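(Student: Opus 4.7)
The plan is to apply Theorem~\ref{TheFom:2} directly to the maximally monotone operator $A := \partial f$. First I would invoke Fact~\ref{SubMR} (Rockafellar) to conclude that $\partial f$ is maximally monotone. Next I need the identity $\inte \dom \partial f = \inte \dom f$: the inclusion $\inte \dom f \subseteq \dom \partial f$ is standard, since a proper lower semicontinuous convex $f$ is continuous, and hence subdifferentiable, at every interior point of its domain; the reverse inclusion $\inte \dom \partial f \subseteq \inte \dom f$ follows from $\dom \partial f \subseteq \dom f$. In particular $\overline{\dom \partial f} = \overline{\dom f}$, and because $S \subseteq \inte \dom f = \inte \dom \partial f$ is assumed dense in $\dom f$, it is \emph{a fortiori} dense in $\inte \dom \partial f$, meeting the hypothesis of Theorem~\ref{TheFom:2}.

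With these ingredients in place, Theorem~\ref{TheFom:2} applied to $A = \partial f$ yields, for every $x\in X$,
\begin{align*}
\partial f(x) = N_{\overline{\dom \partial f}}(x) + \overline{\conv\left[(\partial f)_S(x)\right]}^{\wk} = N_{\overline{\dom f}}(x) + \overline{\conv\left[(\partial f)_S(x)\right]}^{\wk},
\end{align*}
which is precisely the first claimed equality.

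For the second equality I would split into cases. When $x \in \dom f$, the two normal cones $N_{\dom f}(x)$ and $N_{\overline{\dom f}}(x)$ coincide by a standard continuity argument: the defining inequality $\sup_{c\in C}\langle c-x,x^*\rangle \le 0$ passes to the norm closure $\overline{\dom f}$ of $C=\dom f$. Thus in this case the second equality reduces to the first. When $x \notin \dom f$, we have $\partial f(x) = \varnothing$ (since $\dom\partial f\subseteq\dom f$) and, by convention, $N_{\dom f}(x) = \varnothing$, so the Minkowski sum on the right is empty and matches $\partial f(x)$. I do not expect any real obstacle here: the hard work is entirely bundled in Theorem~\ref{TheFom:2}, and what remains is convex-analytic bookkeeping, namely the identification $\inte\dom\partial f = \inte\dom f$, the resulting equality of closures, and the careful passage between $N_{\dom f}$ and $N_{\overline{\dom f}}$ at boundary and non-domain points.
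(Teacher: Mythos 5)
Your proof is correct and follows the same route the paper intends: specialize Theorem~\ref{TheFom:2} to $A=\partial f$ via Rockafellar's maximal monotonicity of $\partial f$, the identity $\inte\dom\partial f=\inte\dom f$, and the coincidence of $N_{\dom f}$ with $N_{\overline{\dom f}}$ on $\dom f$ (both cones, and $(\partial f)_S(x)$, being empty off $\dom f$). The only step worth making fully explicit is that $\overline{\dom\partial f}=\overline{\dom f}$ requires $\overline{\inte\dom f}=\overline{\dom f}$, which holds because $\dom f$ is convex with nonempty interior, not merely because the interiors of $\dom\partial f$ and $\dom f$ agree.
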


\begin{remark}
Results closely related  to Corollary~\ref{CorSubd} have been obtained in \cite{Rock70CA,BBC1,JoTh, ThZag} and elsewhere. Interestingly, in the convex case we have obtained as much information more easily than by the direct convex analysis approach of \cite{BBC1}. \qede
\end{remark}

Now we refine Corollary \ref{CorCL:4} and Theorem \ref{TheFom:2}.

Let $A:X\rightrightarrows X^*$. We define $\widehat{A}:X\rightrightarrows X^*$ by
\begin{align}
\gra \widehat{A}:=\big\{(x,x^*)\in X\times X^*\mid x^*\in\bigcap_{\varepsilon>0}
\overline{\conv \left[A(x+\varepsilon B_X)\right]}^{\wk}
\big\}.\label{NeEL:1}
\end{align}
Clearly, we have $\overline{\gra A}^{\|\cdot\|\times \wk}\subseteq\gra\widehat{A}$.

\begin{theorem}[Reconstruction of $A$, II]\label{PropLe:2}\emph{(See \cite[Theorem~5.3]{BY1}.)}
Let $A:X\rightrightarrows X^*$ be maximally monotone
with $\inte\dom A\neq\varnothing$.
\begin{enumerate}\item \label{part:i}
 Then $\widehat{A}=A$.

In particular, $A$ has property (Q); and so has a norm $\times$  weak$^*$ closed graph.

\item  \label{part:ii} Moreover, if $S \subseteq \inte \dom A$ is dense in $\inte\dom A$ then
 \begin{align}
 \widehat{A_S}(x):&=\bigcap_{\varepsilon>0}
\overline{\conv \left[A(S \cap(x+\varepsilon B_X))\right]}^{\wk}
\supseteq\overline{\conv\left[A_{S}(x)\right]}^{\wk},\label{ThNp:S2}
 \quad \forall x\in X.
\end{align}
 Thence
  \begin{align}
Ax= \widehat{A_S}(x)+N_{\overline{\dom A}}(x)
,\label{ThNp:S3}
 \quad \forall x\in X.
\end{align}
\end{enumerate}
 \end{theorem}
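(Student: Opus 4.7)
My plan is to prove part (i) first and then deduce part (ii) from it and Theorem~\ref{TheFom:2}.

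For (i), the inclusion $A\subseteq\widehat{A}$ is immediate. To prove $\widehat{A}\subseteq A$ I invoke maximality of $A$: it suffices to show that every $(x,x^*)\in\gra\widehat{A}$ is monotonically related to $\gra A$. Fix such $(x,x^*)$ (necessarily $x\in\overline{\dom A}$) and $(y,y^*)\in\gra A$; the target is $\langle x-y,x^*-y^*\rangle\geq 0$. For each $\varepsilon>0$, the membership $x^*\in\overline{\conv[A(x+\varepsilon B_X)]}^{\wk}$ yields a net of convex combinations $x^*_\gamma=\sum_j\lambda_{\gamma,j}z^*_{\gamma,j}$ with $z^*_{\gamma,j}\in Az_{\gamma,j}$, $z_{\gamma,j}\in x+\varepsilon B_X$, and $x^*_\gamma\weakstarly x^*$. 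Setting $u_\gamma:=\sum_j\lambda_{\gamma,j}z_{\gamma,j}$ and $E_\gamma:=\sum_j\lambda_{\gamma,j}\langle z_{\gamma,j}-x,z^*_{\gamma,j}\rangle$, monotonicity termwise plus weighted summation and rearrangement give
\begin{equation*}
\langle x-y,x^*_\gamma-y^*\rangle\;\geq\;-\langle x-u_\gamma,y^*\rangle-E_\gamma\;\geq\;-\varepsilon\|y^*\|-E_\gamma.
\end{equation*}

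When $x\in\inte\dom A$, Fact~\ref{pheps:11} supplies a uniform bound $\|z^*_{\gamma,j}\|\leq K$ for small $\varepsilon$, so $|E_\gamma|\leq\varepsilon K$; passing to the weak$^*$-limit in $\gamma$ (continuous for $\langle x-y,\cdot\rangle$) and then to $\varepsilon\downarrow 0$ delivers the target inequality. The main obstacle is the boundary case $x\in\bd\dom A\cap\dom A$, where local boundedness at $x$ fails and $E_\gamma$ cannot be controlled directly. My plan there is to argue by contradiction using Theorem~\ref{TheFom:2}: if $x^*\notin Ax=N_{\overline{\dom A}}(x)+\overline{\conv[A_S(x)]}^{\wk}$, Hahn-Banach separation yields $v\in X$ with $\langle v,x^*\rangle>\sup\langle N_{\overline{\dom A}}(x),v\rangle+\sup\langle A_S(x),v\rangle$. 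Finiteness of the left-hand side forces $v\in T_{\overline{\dom A}}(x)$, and a small perturbation into $H_{\overline{\dom A}}(x)$ (nonempty since $\inte\dom A\neq\varnothing$) reduces the argument to $v\in H_{\overline{\dom A}}(x)$. Then Proposition~\ref{LemClC:3} identifies $\sup\langle A_S(x),v\rangle=\sup\langle Ax,v\rangle$, whereas $x^*\in\widehat{A}(x)$ forces $\langle v,x^*\rangle\leq\sup\{\langle v,z^*\rangle:z\in(x+\varepsilon B_X)\cap\dom A,\,z^*\in Az\}$ for every $\varepsilon>0$; combining the strong directional boundedness of Lemma~\ref{LemClC:1} with the norm $\times$ weak$^*$ closedness from Corollary~\ref{CorCL:1} extracts a cluster point in $A_S(x)$ realizing $\langle v,\cdot\rangle$ strictly larger than $\sup\langle Ax,v\rangle$, the desired contradiction.

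For (ii), the inclusion $\widehat{A_S}(x)\supseteq\overline{\conv[A_S(x)]}^{\wk}$ is immediate: any $z^*\in A_S(x)$ arises as a norm $\times$ weak$^*$ limit of a net in $\gra A\cap(S\times X^*)$ eventually inside $S\cap(x+\varepsilon B_X)$, so $z^*\in\overline{\conv[A(S\cap(x+\varepsilon B_X))]}^{\wk}$ for every $\varepsilon>0$, and $\widehat{A_S}(x)$ is weak$^*$-closed and convex by construction. The identity $Ax=\widehat{A_S}(x)+N_{\overline{\dom A}}(x)$ then splits into two inclusions: $\supseteq$ uses $\widehat{A_S}(x)\subseteq\widehat{A}(x)=Ax$ (from part (i)) together with the maximality consequence $Ax+N_{\overline{\dom A}}(x)\subseteq Ax$ (for $z^*\in Ax$, $n^*\in N_{\overline{\dom A}}(x)$, and $(y,y^*)\in\gra A$ the inequality $\langle y-x,y^*-z^*-n^*\rangle\geq 0$ is immediate, so maximality yields $z^*+n^*\in Ax$); $\subseteq$ follows from Theorem~\ref{TheFom:2} combined with $\overline{\conv[A_S(x)]}^{\wk}\subseteq\widehat{A_S}(x)$.
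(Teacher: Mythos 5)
The paper itself gives no proof of this theorem (it defers to \cite[Theorem~5.3]{BY1}), so I assess your argument directly. Your treatment of part (ii), granting part (i) and Theorem~\ref{TheFom:2}, is correct, and your interior case of part (i) --- controlling $E_\gamma$ by Rockafellar's local boundedness and passing to the weak$^*$ limit --- is also fine. The genuine gap is in the boundary case of part (i), which is where all the difficulty of the theorem lives. From $x^*\in\widehat{A}(x)$ and the separating $v\in H_{\overline{\dom A}}(x)$ you only get $\langle v,x^*\rangle\leq\sup\langle A(x+\varepsilon B_X),v\rangle$ for each $\varepsilon>0$; to contradict $\langle v,x^*\rangle>\sup\langle Ax,v\rangle$ you need $\limsup_{\varepsilon\downarrow 0}\sup\langle A(x+\varepsilon B_X),v\rangle\leq\sup\langle Ax,v\rangle$, and your route to this is to extract a weak$^*$ cluster point of selections $z_n^*\in Az_n$, $z_n\to x$, with $\langle v,z_n^*\rangle$ bounded away from $\sup\langle Ax,v\rangle$. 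That extraction requires $(z_n^*)$ to be bounded, and nothing you cite provides this: Lemma~\ref{LemClC:1} bounds $\|a^*\|$ only for $a$ in the blunted cone $[x_1+\delta B_X,\,x[$ with $x_1\in\inte\dom A$, which near $x$ occupies only a conical sector of $x+\varepsilon B_X$ pointing toward $x_1$; the points $z_n$ realizing the supremum can perfectly well approach $x$ along $\bd\dom A$ outside this sector, where $A$ is genuinely locally unbounded (think of $N_C$ near a boundary point of $C$). Theorem~\ref{bondednet:1} does not help either, since it presupposes a convergent net. Without boundedness your inequalities are consistent with $\sup\langle A(x+\varepsilon B_X),v\rangle=+\infty$ for every $\varepsilon$, and no contradiction arises; establishing this directional upper semicontinuity at boundary points is precisely the substance of \cite[Theorem~5.3]{BY1} and needs a further argument that absorbs the uncontrolled term $\langle z_n-x,z_n^*\rangle$ rather than assuming it away.

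A second, smaller omission: you treat only $x\in\inte\dom A$ and $x\in\bd\dom A\cap\dom A$, but $\dom A$ need not be closed, and for $x\in\overline{\dom A}\setminus\dom A$ one must still prove $\widehat{A}(x)=\varnothing$. There $Ax=\varnothing$, so there is nothing to separate $x^*$ from and your Hahn--Banach step is vacuous. That case has to be folded into an argument showing that elements of $\gra\widehat{A}$ are monotonically related to $\gra A$ (so that maximality forces $x\in\dom A$), which returns you to the same unresolved boundary estimate.
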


\begin{remark}
Property (Q), first introduced by Cesari in Euclidean space, was recently established for maximally monotone operators with nonempty domain interior in a barreled normed space by
 Voisei  in \cite[Theorem~42]{VoiseiIn} (See also \cite[Theorem~43]{VoiseiIn} for the result under more general hypotheses.).
Several interesting characterizations of maximally monotone
operators in finite dimensional spaces, including the property (Q)
were studied by L\"{o}hne \cite{Lohne}.
\end{remark}

In general, we do not have  $Ax=\overline{\conv\left[A_{S}(x)\right]}^{\wk}, \forall x\in\dom A$, for a maximally monotone operator
$A:X\rightrightarrows X^*$ with
$S \subseteq \inte\dom A\neq\varnothing$ such that $S$ is dense in $\dom A$.

 We give a simple example to demonstrate this.

\begin{example}(See \cite[Example~6.1]{BY1}.)
Let $C$ be a closed convex subset of $X$ with $S\subseteq\inte C\neq\varnothing$ such that
$S$ is dense in $C$.
Then $N_C$ is maximally monotone  and $\gra (N_C)_{S}=C\times \{0\}$, but $N_C(x)\neq\overline{\conv \left[(N_C)_{S}(x)\right]}^{\wk},
\forall x\in \bd C$.  We thus must have
$\bigcap_{\varepsilon>0}
\overline{\conv \left[N_C(x+\varepsilon B_X)\right]}^{\wk}=N_C(x),\,\forall x\in X$. \qede
\end{example}

While the subdifferential operators in Example~\ref{ex:notclosed} necessarily
fail to have property (Q), it is possible for operators with no points of continuity to possess the property. Considering any closed linear mapping $A$ from a reflexive space $X$ to its dual,
we have $\widehat{A}=A$ and hence $A$ has property (Q). More generally:

\begin{example}(See \cite[Example~6.2]{BY1}.)
Suppose that $X$ is reflexive. Let $A:X\rightrightarrows X^*$ be such that
 $\gra A$ is nonempty closed and convex.
Then $\widehat{A}=A$ and hence $A$ has property (Q). \qede
\end{example}

It would be interesting to know whether $\widehat A$ and $A$ can differ for a maximal operator with norm $\times$ weak$^*$ closed graph.

Finally, we illustrate what Corollary \ref{CorSubd} says in the case of $x\mapsto \iota_{B_X}(x)+\frac{1}{p}\|x\|^p.$

\begin{example}\label{SForEx:1}(See \cite[Example~6.3]{BY1}.)
Let  $p> 1$ and $f:X\rightarrow\RX$ be defined by
\begin{align*}
x\mapsto \iota_{B_X}(x)+\frac{1}{p}\|x\|^p.
\end{align*}
Then for every $x\in \dom f$, we have
\begin{align*}
N_{\dom f}(x)&=\begin{cases}\RR_+\cdot Jx,\, &\text{if}\, \|x\|=1;\\
\{0\},\, &\text{if}\, \|x\|<1
\end{cases},\quad
(\partial f)_{\inte}(x)=\begin{cases}\|x\|^{p-2}\cdot Jx,\, &\text{if}\, \|x\|\neq0;\\
\{0\},\, &\text{otherwise}
\end{cases}
\end{align*}
where $J:=\partial\tfrac{1}{2}\|\cdot\|^2$ and $\RR_+:=\left[0,+\infty\right[$.
Moreover, $\partial f=N_{\dom f}+(\partial f)_{\inte}=N_{\dom f}+\partial \frac{1}{p}\|\cdot\|^p$,
and then $\partial f(x)\neq (\partial f)_{\inte}(x)=
\overline{\conv\left[(\partial f)_{\inte}(x)\right]}^{\wk}, \forall x\in\bd\dom f$.
We also have $\bigcap_{\varepsilon>0}
\overline{\conv \left[\partial f(x+\varepsilon B_X)\right]}^{\wk}=\partial f(x),\,\forall x\in X$. \qede
\end{example}

\subsection{Characterizations of the domain and range of $A$}\label{ssec:conv}
The following is the classical result on the convexity of the closure of the domain of a maximally monotone operator with nonempty interior domain.
\begin{fact}[Rockafellar]
\emph{(See \cite[Theorem~1]{Rock69} or \cite[Theorem~27.1 and Theorem~27.3]{Si2}.)}
\label{f:referee02c}
Let $A:X\To X^*$ be  maximal monotone with $\inte\dom A\neq\varnothing$. Then
$\inte\dom A=\inte\overline{\dom A}$ and $\overline{\dom A}$ is convex.
\end{fact}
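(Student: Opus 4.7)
The plan is to reduce both assertions to a single \emph{line-segment lemma}: if $x_0\in\inte\dom A$ and $x_1\in\dom A$, then $[x_0,x_1)\subseteq\inte\dom A$. Once that is in hand, the rest is soft convex topology. Applying the lemma to two interior points shows $\inte\dom A$ is convex. Applying it with a fixed $x_0\in\inte\dom A$ and arbitrary $x_1\in\dom A$ and letting $t\downarrow 0$ in $(1-t)x_1+t x_0$ shows $\dom A\subseteq\overline{\inte\dom A}$, whence $\overline{\dom A}=\overline{\inte\dom A}$. The right-hand side is the closure of a convex set, which proves $\overline{\dom A}$ is convex. Finally, any convex set $C\subseteq X$ with nonempty interior satisfies $\inte C=\inte\overline C$; applied to $C=\inte\dom A$ this yields $\inte\dom A=\inte\overline{\inte\dom A}=\inte\overline{\dom A}$.

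The hard part is the line-segment lemma. Fix $z:=(1-t)x_0+t x_1$ with $t\in(0,1)$, and use Fact~\ref{pheps:11} to choose $\delta>0$ and $M\ge 0$ with $\sup\{\|a^*\|:a\in(x_0+2\delta B_X)\cap\dom A,\ a^*\in Aa\}\le M$. I would show there is $\varepsilon>0$ with $z+\varepsilon B_X\subseteq\dom A$. By maximality, for $w\in z+\varepsilon B_X$ it suffices to produce $w^*\in X^*$ with $(w,w^*)$ monotonically related to $\gra A$; equivalently, by Fact~\ref{f:Fitz}, with $F_A(w,w^*)\le\langle w,w^*\rangle$. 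The natural candidate is a minimiser of the proper convex weak$^*$-lower semicontinuous function $y^*\mapsto F_A(w,y^*)-\langle w,y^*\rangle$ over a weak$^*$-compact ball $RB_{X^*}$ with $R$ chosen large enough to dominate $M$ and the local geometry near $z$; Banach--Alaoglu delivers a minimiser $w^*$, and one aims to show both that the minimum is $\le 0$ and that the ball constraint is inactive, so that $w^*$ is a genuine global minimiser of the unconstrained problem.

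The principal obstacle is this inactivity/admissibility step: one must verify that $w^*$ is monotonically related to \emph{all} of $\gra A$, not merely to the ``near'' part $\gra A\cap((x_0+2\delta B_X)\times X^*)$. This is where the open-segment hypothesis $w\in[x_0,x_1)+\varepsilon B_X$ is essential --- any pair $(a,a^*)\in\gra A$ with $a$ far from $x_0$ is, by the monotonicity inequality along the line joining a point of $x_0+\delta B_X$ through $w$ toward $x_1$, already controlled by the directional estimate of Lemma~\ref{LemClC:1} (which relies precisely on $x_1\in\dom A$). Feeding this back into the definition of $F_A(w,\cdot)$ bounds $w^*$ a priori in $X^*$ independently of $R$, and forces the ball constraint to be inactive. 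The resulting global minimiser satisfies $F_A(w,w^*)\le\langle w,w^*\rangle$, so maximality of $A$ forces $(w,w^*)\in\gra A$ and hence $w\in\dom A$, which completes the lemma and, via the reductions of the first paragraph, the theorem.
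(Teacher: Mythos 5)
The paper itself gives no proof of this Fact (it is quoted from Rockafellar and from Simons), so your attempt has to be measured against the classical arguments. Your first paragraph is fine: reducing everything to the accessibility lemma ``$x_0\in\inte\dom A$, $x_1\in\dom A$ $\Rightarrow$ $[x_0,x_1)\subseteq\inte\dom A$'' and then finishing with soft convex topology is exactly the standard skeleton, and each of those soft steps is correct as you state it.

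The gap is in the accessibility lemma, and it sits at the one point that carries all of the content. You propose to minimise $y^*\mapsto F_A(w,y^*)-\langle w,y^*\rangle$ over $RB_{X^*}$ and then to ``show both that the minimum is $\le 0$ and that the ball constraint is inactive''. But $\inf_{y^*}\big(F_A(w,y^*)-\langle w,y^*\rangle\big)\le 0$ \emph{is} the assertion that some $w^*$ is monotonically related to $\gra A$, which by maximality \emph{is} the assertion $w\in\dom A$ --- the thing to be proved. Everything you actually argue (weak$^*$ compactness of $RB_{X^*}$, the a priori bound on the minimiser via Lemma~\ref{LemClC:1}, inactivity of the constraint) controls \emph{where} a minimiser lives and whether the infimum is attained, not the \emph{sign} of the infimum; indeed, if the minimum over the ball were already $\le 0$ you would be done with no inactivity discussion, and if it is $>0$ inactivity does not help. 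Since $F_A\ge\langle\cdot,\cdot\rangle$ everywhere for a maximally monotone $A$, the infimum is automatically $\ge 0$, and for $w$ outside $\overline{\dom A}$ (take $A=N_{B_X}$ and $\|w\|>1$) it is strictly positive; so the desired equality to $0$ must be extracted from the geometric hypothesis $w\in\big[x_0+\delta B_X,x_1\big)$, and your sketch never converts that hypothesis into an upper bound for the infimum. What is missing is precisely the nontrivial existence mechanism of the classical proofs: either a Debrunner--Flor/minimax (Brouwer-type) argument producing $w^*\in KB_{X^*}$ monotonically related to the truncated graph $\gra A\cap\big((w+\rho B_X)\times X^*\big)$, which your Lemma~\ref{LemClC:1} estimates could then upgrade to the full graph; or a limiting argument extracting a weak$^*$ cluster point of a bounded net $(a_\alpha^*)$ with $a_\alpha\to w$ in $\dom A$, which requires first proving $w\in\overline{\dom A}$ and so cannot reach genuinely new points of the segment without further input. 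Until one of these is supplied, the argument is circular at its core.
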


Let $A:X\To X^*$ be   monotone.  We say $A$ is \emph{rectangular} if $\dom A\times \ran A
\subseteq\dom F_A$.  Now we note the following interesting result on the characterization of
 the sum of the ranges of two monotone operators.
\begin{fact}[Reich]\emph{(See \cite[Theorem~2.2]{Reich}, or \cite[Corollary~31.6]{Si2}.)}\label{RREctan} Suppose that $X$ is reflexive.
Let $A, B:X\To X^*$ be   monotone such that $A+B$ is maximally monotone. If either
$A$ and $B$ are rectangular, or $\dom A\subseteq \dom B$ and $B$ is rectangular,
then the Brezis-Haraux condition
\begin{align*}
\inte\ran (A+B)=\inte(\ran A+\ran B)\quad\text{and}
\quad \overline{\ran (A+B)}=\overline{\ran A+\ran B}.
\end{align*}
holds.
\end{fact}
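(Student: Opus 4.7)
The inclusion $\ran(A+B)\subseteq\ran A+\ran B$ is immediate, so only the reverse inclusions require proof. Since $X$ is reflexive, Rockafellar's classical theorem makes $\overline{\ran A}$, $\overline{\ran B}$ and $\overline{\ran(A+B)}$ all convex. Consequently $\overline{\ran A+\ran B}$ is convex and, whenever its interior is nonempty, equals $\overline{\inte(\ran A+\ran B)}$; when the interior is empty both identities are trivial. It therefore suffices to establish the implication
\[
y^*\in\inte(\ran A+\ran B)\ \Longrightarrow\ y^*\in\ran(A+B),
\]
from which both the interior and closure statements follow.

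Fix such a $y^*$ and pick a decomposition $y^*=u^*+v^*$ with $u^*\in Au_0$, $v^*\in Bv_0$. Because $A+B$ is maximally monotone and $X$ is reflexive, $A+B+\lambda J$ is maximally monotone, coercive, and hence surjective for every $\lambda>0$ (Rockafellar). This produces, for each $\lambda>0$, an $x_\lambda\in X$ and selections $a_\lambda^*\in Ax_\lambda$, $b_\lambda^*\in Bx_\lambda$, $j_\lambda^*\in Jx_\lambda$ with $y^*=a_\lambda^*+b_\lambda^*+\lambda j_\lambda^*$. Adding the monotonicity inequalities for $A$ at $(u_0,u^*)$ and for $B$ at $(v_0,v^*)$ and using $\langle x_\lambda,j_\lambda^*\rangle=\|x_\lambda\|^2\geq 0$ yields the basic estimate
\[
\lambda\|x_\lambda\|^2+\langle u_0,a_\lambda^*-u^*\rangle+\langle v_0,b_\lambda^*-v^*\rangle\leq 0.
\]

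The crux, and the main obstacle, is to upgrade this into genuine a priori bounds on $(x_\lambda,a_\lambda^*,b_\lambda^*)$. Here rectangularity combines with the openness of $y^*$ in $\ran A+\ran B$: there is $\varepsilon>0$ such that every $e^*\in B_{X^*}$ admits a decomposition $y^*+\varepsilon e^*=\widetilde u^*_{e^*}+\widetilde v^*_{e^*}$ with $\widetilde u^*_{e^*}\in\ran A$ and $\widetilde v^*_{e^*}\in\ran B$, so rectangularity renders the Fitzpatrick values $F_A(x_\lambda,\widetilde u^*_{e^*})$ and $F_B(x_\lambda,\widetilde v^*_{e^*})$ finite (since $x_\lambda\in\dom A\cap\dom B$). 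The classical Brezis--Haraux bootstrap, exploiting local boundedness of the convex functions $F_A$ and $F_B$ on the interiors of their effective domains together with a measurable choice of decompositions as $e^*$ varies over $B_{X^*}$, converts these into $\sup_\lambda\|x_\lambda\|<\infty$; the basic estimate then controls $\|a_\lambda^*\|+\|b_\lambda^*\|$. In the asymmetric hypothesis $\dom A\subseteq\dom B$ with only $B$ rectangular, the membership $x_\lambda\in\dom A\subseteq\dom B$ is automatic and only the one-sided rectangular estimate is required.

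With $(x_\lambda,a_\lambda^*,b_\lambda^*)$ bounded, reflexivity yields a subsequence along which $x_\lambda\weakly x$; since $\|\lambda j_\lambda^*\|=\lambda\|x_\lambda\|\to 0$ we have $a_\lambda^*+b_\lambda^*\to y^*$ in norm, whence $\langle x_\lambda,a_\lambda^*+b_\lambda^*\rangle\to\langle x,y^*\rangle$. The standard demiclosedness lemma for maximally monotone operators in reflexive space, applied to $A+B$, then gives $(x,y^*)\in\gra(A+B)$, i.e., $y^*\in\ran(A+B)$, completing the proof.
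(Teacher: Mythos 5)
The paper records this result as a quoted Fact, with references to Reich and to Simons' monograph, and gives no proof of its own; so your proposal can only be measured against the classical Brezis--Haraux/Reich argument. Against that benchmark, your opening reduction already contains a genuine gap. You invoke ``Rockafellar's classical theorem'' to conclude that $\overline{\ran A}$ and $\overline{\ran B}$ are convex, but $A$ and $B$ are only assumed monotone, not maximally monotone, so that theorem does not apply to them. More importantly, the implication $y^*\in\inte(\ran A+\ran B)\Rightarrow y^*\in\ran(A+B)$ does not yield the closure identity: when $\inte(\ran A+\ran B)=\varnothing$ the closure statement is far from trivial (both ranges could lie in a proper closed subspace and still fail to have equal closures a priori), and even when the interior is nonempty one would need $\overline{\ran A+\ran B}=\overline{\inte(\ran A+\ran B)}$, which is not automatic for a set that is not known to be convex. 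The classical proof runs in the opposite order: it first shows that \emph{every} $y^*=u^*+v^*\in\ran A+\ran B$ lies in $\overline{\ran(A+B)}$, which gives the closure identity, and then deduces the interior identity from the general fact that $\inte\ran S=\inte\overline{\ran S}$ for a maximally monotone $S$ on a reflexive space (apply Rockafellar's local boundedness to $S^{-1}$).

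The second gap is that your ``crux'' step --- upgrading the basic estimate to $\sup_\lambda\|x_\lambda\|<\infty$ --- is precisely where all the work lies, and the appeal to a ``classical bootstrap'' with a ``measurable choice of decompositions'' does not supply it. Carried out honestly, that route requires $\sup_{e^*\in B_{X^*}}\bigl(F_A(z,\widetilde u^*_{e^*})+F_B(z,\widetilde v^*_{e^*})\bigr)<\infty$, i.e.\ local boundedness of the inf-convolution $F_A(z,\cdot)\,\Box\,F_B(z,\cdot)$ on the open set $y^*+\varepsilon\inte B_{X^*}$; a convex function finite on an open subset of an infinite-dimensional Banach space need not be locally bounded there unless it is lower semicontinuous, and this inf-convolution need not be. The standard argument avoids the bound on $x_\lambda$ altogether: rectangularity is used in its decoupled form, namely $\sup_{(a,a^*)\in\gra A}\langle a-z,\,u^*-a^*\rangle\le F_A(z,u^*)-\langle z,u^*\rangle<\infty$ for any $z\in\dom A$ and $u^*\in\ran A$ (and similarly for $B$, with $z=u_0\in\dom A\subseteq\dom B$ in the asymmetric case), which yields $\sup_{(x,x^*)\in\gra(A+B)}\langle x-z,\,y^*-x^*\rangle=:M<\infty$. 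Solving $y^*\in(A+B)x_\lambda+\lambda J(x_\lambda-z)$ then gives $\lambda\|x_\lambda-z\|^2\le M$, hence $\|y^*-(a^*_\lambda+b^*_\lambda)\|=\lambda\|x_\lambda-z\|\le\sqrt{\lambda M}\to 0$, and $y^*\in\overline{\ran(A+B)}$ with no weak-limit extraction needed. Note also that your claim that the basic estimate ``controls $\|a_\lambda^*\|+\|b_\lambda^*\|$'' is false --- it bounds only the scalars $\langle u_0,a_\lambda^*\rangle$ and $\langle v_0,b_\lambda^*\rangle$ --- although those norm bounds are not actually needed for your final demiclosedness step.
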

In the setting of a Hilbert space, Brezis and Haraux proved the above result in
\cite[Theorem~3, pp.~173--174]{BreHara}.

The strong result below  follows directly from the definition of
operators of type (BR):

\begin{proposition}\label{ProBR2}\emph{(See \cite[Proposition~3.5]{BY2}.)}
Let $A:X\rightrightarrows X^*$ be maximally monotone
 and $(x,x^*)\in X\times X^*$. Assume that $A$
is of type (BR) and that $\inf_{(a,a^*)\in\gra A}\langle
x-a,x^*-a^*\rangle>-\infty$. Then $x\in\overline{\dom A}$ and
$x^*\in\overline{\ran A}$. In particular, \[\overline{\dom A}=\overline{P_X\left[\dom F_A\right]}\quad \mbox{and}
 \quad \overline{\ran A}=\overline{P_{X^*}\left[\dom F_A\right]}.\] In particular, $\overline{\dom A}$ and $\overline{\ran A}$ are both  convex.
\end{proposition}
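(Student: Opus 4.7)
The plan is to reduce both claims $x\in\overline{\dom A}$ and $x^*\in\overline{\ran A}$ to a single direct application of the (BR) condition. Set $M:=\inf_{(a,a^*)\in\gra A}\langle x-a,x^*-a^*\rangle$, which is finite by hypothesis. For any $\varepsilon>0$ I would pick $\alpha:=\varepsilon$ and then choose $\beta>0$ large enough that $\alpha\beta>-M$ (for example $\beta>\max\{1,-M/\varepsilon\}$). Then $M>-\alpha\beta$, so by the definition of (BR) there exists $(b,b^*)\in\gra A$ with $\|x-b\|<\varepsilon$ and $\|x^*-b^*\|<\beta$. Letting $\varepsilon\to 0$ gives $x\in\overline{\dom A}$. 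The statement $x^*\in\overline{\ran A}$ follows by the symmetric choice $\beta:=\varepsilon$ and $\alpha$ large enough with $\alpha\beta>-M$.

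For the two equalities, the key observation is the identity
\begin{equation*}
F_A(x,x^*)-\langle x,x^*\rangle=-\inf_{(a,a^*)\in\gra A}\langle x-a,x^*-a^*\rangle,
\end{equation*}
which one reads directly from the defining formula \eqref{defFA}. Hence $(x,x^*)\in\dom F_A$ if and only if the infimum appearing in the hypothesis of the first part is finite. To prove $\overline{\dom A}=\overline{P_X[\dom F_A]}$ I would argue both inclusions: for $\subseteq$, note that Fact~\ref{f:Fitz} gives $F_A=\langle\cdot,\cdot\rangle$ on $\gra A$, so $\gra A\subseteq\dom F_A$ and therefore $\dom A\subseteq P_X[\dom F_A]$; taking closures gives the inclusion. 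For $\supseteq$, any $x\in P_X[\dom F_A]$ admits some $x^*$ with $(x,x^*)\in\dom F_A$, and the identity above shows the hypothesis of the first part of the proposition is satisfied at $(x,x^*)$; the first part then places $x$ in $\overline{\dom A}$. The equality $\overline{\ran A}=\overline{P_{X^*}[\dom F_A]}$ is obtained by the completely symmetric argument.

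The convexity assertions come for free from the preceding equalities: $F_A$ is convex by Fact~\ref{f:Fitz}, so $\dom F_A\subseteq X\times X^*$ is convex, and its images under the linear projections $P_X$ and $P_{X^*}$ are convex. Consequently $\overline{P_X[\dom F_A]}=\overline{\dom A}$ and $\overline{P_{X^*}[\dom F_A]}=\overline{\ran A}$ are closures of convex sets, hence convex.

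There is really no serious obstacle here; the only mild point of care is the selection of the parameters $\alpha,\beta$ to force $\alpha\beta>-M$ while driving one of them to zero, which is what allows (BR) to be activated repeatedly. Everything else is an unpacking of definitions together with the elementary identity relating $F_A$ to the infimum of $\langle x-a,x^*-a^*\rangle$, plus the Fitzpatrick identity $F_A=\langle\cdot,\cdot\rangle$ on $\gra A$.
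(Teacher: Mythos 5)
Your proof is correct and is exactly the direct unpacking of the (BR) definition that the paper alludes to when it says the result ``follows directly from the definition of operators of type (BR)'' (the paper itself only cites \cite[Proposition~3.5]{BY2} without reproducing the argument). The parameter choices $\alpha=\varepsilon$, $\beta>\max\{1,-M/\varepsilon\}$ (and the symmetric swap), together with the identity $F_A(x,x^*)-\langle x,x^*\rangle=-\inf_{(a,a^*)\in\gra A}\langle x-a,x^*-a^*\rangle$ and Fact~\ref{f:Fitz}, give precisely the intended proof.
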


We recall that every monotone operator of type (FPV) has a convex
closure of its domain,  while every maximally monotone continuous linear
operator is of type (FPV) (see \cite[Theorem~46.1]{Si2} or
\cite{BY2}). But as  Remark~\ref{RProBR:2} shows, a
maximally monotone bounded linear operator  need not be of type (BR).

We turn to an interesting related result on the domain of $A$.
\begin{theorem}\label{ProdLm:3}\emph{(See \cite[Theorem~3.6]{BY2}.)}
Let $A:X\rightrightarrows X^*$ be maximally monotone.
Then
\begin{align*}
\overline{\conv\left[\dom A\right]}=\overline{P_X\left[\dom F_A\right]}.
\end{align*}
\end{theorem}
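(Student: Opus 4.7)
The plan is to prove the two inclusions separately; the forward inclusion is immediate from Fact~\ref{f:Fitz}, and the reverse reduces to a single perturbation argument inside the Fitzpatrick function.

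For $\overline{\conv[\dom A]}\subseteq\overline{P_X[\dom F_A]}$, note that Fact~\ref{f:Fitz} gives $F_A(a,a^*)=\langle a,a^*\rangle<\infty$ for every $(a,a^*)\in\gra A$, so $\dom A\subseteq P_X[\dom F_A]$. Since $F_A$ is convex, $\dom F_A$ and its projection $P_X[\dom F_A]$ are convex, and closing both sides yields the inclusion.

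For the reverse, I would show directly that $P_X[\dom F_A]\subseteq\overline{\conv[\dom A]}$; the right-hand side is already closed. Fix $x\in P_X[\dom F_A]$ with witness $x^*\in X^*$ satisfying $M:=F_A(x,x^*)<\infty$. By the standard support-function characterization of the closed convex hull, it suffices to show $\langle x,y^*\rangle\leq\sigma_{\dom A}(y^*)$ for every $y^*\in X^*$. The key move is a linear perturbation of $x^*$ in the direction $y^*$: for each $t\geq 0$, splitting the supremum that defines $F_A(x,x^*+ty^*)$ across the summand $t\langle a,y^*\rangle$ gives the upper estimate $F_A(x,x^*+ty^*)\leq M+t\,\sigma_{\dom A}(y^*)$, while Fact~\ref{f:Fitz} gives the lower bound $F_A(x,x^*+ty^*)\geq\langle x,x^*+ty^*\rangle$. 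Subtracting yields $t\bigl(\langle x,y^*\rangle-\sigma_{\dom A}(y^*)\bigr)\leq M-\langle x,x^*\rangle$ for every $t\geq 0$. Letting $t\to+\infty$ forces $\langle x,y^*\rangle\leq\sigma_{\dom A}(y^*)$ when the latter is finite, and the inequality is automatic otherwise.

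The main obstacle is simply spotting the right perturbation; once one drives $t\to+\infty$ along the ray $x^*+ty^*$, the crucial feature is that the upper bound is \emph{linear} in $t$ with slope exactly $\sigma_{\dom A}(y^*)$, so the comparison with the pointwise bound $F_A\geq\langle\cdot,\cdot\rangle$ closes the proof. The same technique applied to perturbations in the $X$-coordinate yields the companion identity $\overline{\conv[\ran A]}=\overline{P_{X^*}[\dom F_A]}$, which is the symmetric counterpart of the result and dovetails with Proposition~\ref{ProBR2}.
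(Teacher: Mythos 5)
Your proof is correct, and it reaches the conclusion by a cleaner, more direct route than the one the paper follows. The paper (see the proof it gives for the companion Theorem~\ref{ProdLm:4}, which it says follows the lines of \cite[Theorem~3.6]{BY2}) argues by contradiction: assuming the projected point lies outside the closed convex hull, it invokes Hahn--Banach separation to produce a single direction $y_0$ with a uniform gap $\delta$, then applies maximal monotonicity at each perturbed point $(z+ny_0,z^*)$ (respectively $(z,z^*+ny_0^*)$ for the domain version) to manufacture pairs $(a_n,a_n^*)\in\gra A$ whose contributions drive $F_A(z,z^*)$ to $+\infty$, contradicting $(z,z^*)\in\dom F_A$. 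You instead verify the support-function inequality $\langle x,y^*\rangle\leq\sigma_{\dom A}(y^*)$ for \emph{every} $y^*$ directly, by comparing the linear-in-$t$ upper bound $F_A(x,x^*+ty^*)\leq M+t\,\sigma_{\dom A}(y^*)$ with the lower bound $F_A\geq\langle\cdot,\cdot\rangle$ and sending $t\to+\infty$. The two arguments exploit the same underlying mechanism---the behaviour of $F_A$ along the ray $x^*+ty^*$, with maximality entering at the perturbed points---but yours packages the maximality once and for all in the global inequality $F_A\geq\langle\cdot,\cdot\rangle$ and avoids both the contradiction and the explicit separation step, which makes the quantitative structure (linear growth with slope $\sigma_{\dom A}(y^*)$) more transparent. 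One small attribution point: the inequality $F_A\geq\langle\cdot,\cdot\rangle$ on all of $X\times X^*$ is not literally contained in Fact~\ref{f:Fitz} as stated (which only gives equality on $\gra A$ for a monotone operator); it requires maximality---if $(x,x^*)\notin\gra A$ there is $(a,a^*)\in\gra A$ with $\langle x-a,x^*-a^*\rangle<0$, whence $F_A(x,x^*)>\langle x,x^*\rangle$. You should state this explicitly, but it is standard and the paper uses it freely (e.g.\ in Fact~\ref{StFiz}), so there is no gap.
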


\begin{remark}
Theorem~\ref{ProdLm:3}
provides an affirmative answer to a question
 posed by  Simons in
 \cite[Problem~28.3, page~112]{Si2}.
\end{remark}

Following the lines of the proof of \cite[Theorem~3.6]{BY2}, we
obtain the following  counterpart result.

\begin{theorem}\label{ProdLm:4}
Let $A:X\rightrightarrows X^*$ be maximally monotone.
Then
\begin{align*}
\overline{\conv\left[\ran A\right]}^{\wk}=\overline{P_{X^*}\left[\dom F_A\right]}^{\wk}.
\end{align*}
\end{theorem}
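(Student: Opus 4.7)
The argument mirrors that of Theorem~\ref{ProdLm:3}, with norm closures replaced by weak$^*$ closures and with Hahn--Banach separation carried out in the locally convex space $(X^*,\wk)$, whose continuous dual is $X$. Both inclusions rest on the two basic facts $F_A=\pscal$ on $\gra A$ (Fact~\ref{f:Fitz}) and $F_A\geq\pscal$ on $X\times X^*$, the latter valid precisely because $A$ is maximally monotone.

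\textbf{The easy inclusion ($\subseteq$).} For every $(a,a^*)\in\gra A$, Fact~\ref{f:Fitz} gives $F_A(a,a^*)=\langle a,a^*\rangle<+\infty$, so $a^*\in P_{X^*}[\dom F_A]$, whence $\ran A\subseteq P_{X^*}[\dom F_A]$. Since $F_A$ is convex, $\dom F_A$ is convex, and so is its linear image $P_{X^*}[\dom F_A]$; therefore $\conv[\ran A]\subseteq P_{X^*}[\dom F_A]$, and taking weak$^*$ closures yields $\overline{\conv[\ran A]}^{\wk}\subseteq\overline{P_{X^*}[\dom F_A]}^{\wk}$.

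\textbf{The hard inclusion ($\supseteq$).} It suffices to show $P_{X^*}[\dom F_A]\subseteq\overline{\conv[\ran A]}^{\wk}$, after which we take weak$^*$ closures. Suppose, aiming at a contradiction, that $x_0^*\in P_{X^*}[\dom F_A]\setminus\overline{\conv[\ran A]}^{\wk}$, and pick $y_0\in X$ with $F_A(y_0,x_0^*)<+\infty$. The set $\overline{\conv[\ran A]}^{\wk}$ is weak$^*$-closed and convex, so Hahn--Banach separation in $(X^*,\wk)$ produces $x_0\in X$ and $\varepsilon>0$ with
\begin{align*}
c\,:=\,\sup_{a^*\in\ran A}\langle x_0,a^*\rangle\,\leq\,\langle x_0,x_0^*\rangle-\varepsilon.
\end{align*}
For every $t>0$, splitting the supremum defining $F_A$ yields the upper bound $F_A(y_0+tx_0,x_0^*)\leq F_A(y_0,x_0^*)+t\,c$, while the pointwise lower bound $F_A\geq\pscal$ gives $F_A(y_0+tx_0,x_0^*)\geq\langle y_0,x_0^*\rangle+t\langle x_0,x_0^*\rangle$. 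Combining these with $\langle x_0,x_0^*\rangle-c\geq\varepsilon$ produces $t\,\varepsilon\leq F_A(y_0,x_0^*)-\langle y_0,x_0^*\rangle$, a finite quantity, which fails for $t$ large enough. This contradiction delivers the desired inclusion.

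\textbf{Main obstacle.} The substantive step is pairing the upper bound coming from subadditivity of the supremum defining $F_A$ with the pointwise lower bound $F_A\geq\pscal$; this latter bound is what requires \emph{maximal} rather than mere monotonicity of $A$. The use of separation in the weak$^*$ topology (and hence by an element of $X$, not of $X^{**}$) is what both makes the perturbation $y_0+tx_0\in X$ legitimate as an argument of $F_A$ and forces the weak$^*$ closure, rather than the norm closure, to appear on the right-hand side of the identity.
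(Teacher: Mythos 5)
Your proof is correct and follows essentially the same route as the paper's: both argue by contradiction, use Hahn--Banach separation in $(X^*,\wk)$ to produce a separating element of $X$, and obtain the contradiction by letting a parameter tend to infinity along the separating direction. The only cosmetic difference is that the paper invokes maximality directly to produce graph points $(a_n,a_n^*)$ not monotonically related to $(z+ny_0,z^*)$ and thereby drives $F_A(z,z^*)$ to $+\infty$, whereas you encode maximality in the global inequality $F_A\geq\pscal$ and sandwich $F_A$ at the translated point $(y_0+tx_0,x_0^*)$; the two mechanisms are interchangeable.
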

\begin{proof}
By Fact~\ref{f:Fitz}, it suffices to show that
\begin{align}
P_{X^*}\left[\dom F_A\right]\subseteq \overline{\conv\left[\ran A\right]}^{\wk}.
\label{Lsee:10}
\end{align}
Let $(z,z^*)\in\dom F_A$.  We shall show that
\begin{align}
z^*\in\overline{\conv\left[\ran A\right]}^{\wk}.
\label{Lsee:11}
\end{align}
Suppose to the contrary that
\begin{align}
z^*\notin\overline{\conv\left[\ran A\right]}^{\wk}.
\label{Lsee:12}
\end{align}
Since $(z,z^*)\in\dom F_A$,  there exists $r\in\RR$ such that
\begin{align}
F_A(z,z^*)\leq r.\label{Lsee:14a}
\end{align}
By the Separation theorem, there exist $\delta>0$ and  $y_0\in X$ with $\|y_0\|=1$
such that
\begin{align}
\langle y_0, z^*-a^*\rangle>\delta, \quad \forall a^*\in \conv\left[\ran A\right].
\label{Lsee:14}
\end{align}
Let $n\in\NN$.
Since $z^*\notin\overline{\conv\left[\ran A\right]}^{\wk}$, $(z+ny_0, z^*)\notin\gra A$.
By the maximal monotonicity of $A$, there exists $(a_n, a_n^*)\in\gra A$ such that
\begin{align}
&\langle z-a_n, a^*_n-z^*\rangle>\langle n y_0, z^*-a^*_n\rangle
\quad \Rightarrow\langle z-a_n, a^*_n-z^*\rangle>n\delta\quad\text{(by \eqref{Lsee:14})}\nonumber\\
&\quad \Rightarrow \langle z-a_n, a^*_n\rangle+\langle z^*,a_n\rangle>n\delta+\langle z, z^*\rangle.
\label{Lsee:15}
\end{align}
Then we have
\begin{align*}
F_A(z,z^*)&\geq \sup_{n\in\NN}\big\{\langle z-a_n, a^*_n\rangle+\langle z^*,a_n\rangle\big\}
\geq \sup_{n\in\NN}\big\{n\delta +\langle z, z^*\rangle\big\}
=+\infty,\end{align*}
which contradicts \eqref{Lsee:14a}.
Hence $z\in \overline{\conv\left[\ran A\right]}^{\wk}$ and in consequence \eqref{Lsee:10} holds.
\end{proof}
\begin{remark} In Theorem~\ref{ProdLm:4},  we cannot replace the w$^*$ closure by the norm closure. For Example,
let $A$ be defined as in Theorem~\ref{PBABD:2}. Theorem~\ref{ProdLm:4} implies that $\overline{\ran A}^{\wk}=\overline{P_{X^*}\left[\dom F_A\right]}^{\wk}$, however, $\overline{P_{X^*}\left[\dom F_A\right]}\nsubseteq
\overline{\ran A}$ by Remark~\ref{RProBR:1}.

More concrete  example is as follows.
Let $A_{\alpha}$ be define as in Example~\ref{FPEX:1}.
By Remark~\ref{RProBR:2} and Theorem~\ref{ProdLm:4},
$\overline{\ran A_{\alpha}}^{\wk}=\overline{P_{X^*}\left[\dom F_{A_{\alpha}}\right]}^{\wk}$ but  $\overline{P_{X^*}\left[\dom F_{A_{\alpha}}\right]}\nsubseteq
\overline{\ran A_{\alpha}}$.
\end{remark}

\section{Results on linear relations}\label{s:linear}
This section is mainly based on the work in \cite{BBWY3, BWY7, BBYE} by
Bauschke, Borwein, Burachik, Wang and Yao. During the 1970s Brezis
and Browder presented a now classical characterization of maximal
monotonicity of monotone linear relations in reflexive spaces.

\begin{theorem}[Brezis-Browder in reflexive Banach space \cite{BB76,Brezis-Browder}]
 \label{thm:brbr} Suppose that $X$ is reflexive.
Let $A\colon X \To X^*$ be a monotone linear relation
such that $\gra A$ is closed.
Then
$A$ is maximally monotone if and only if the adjoint
$A^*$ is monotone.
\end{theorem}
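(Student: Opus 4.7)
The proof proceeds by establishing the two implications separately. The forward direction is a straightforward algebraic perturbation argument that uses only the linearity of $\gra A$ and maximality, while the backward direction depends essentially on reflexivity and is conveniently routed through the Fitzpatrick function together with the equivalences of Corollary~\ref{cor:main}.

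For $(\Rightarrow)$: Let $(y, y^*) \in \gra A^*$, so $\langle y, b^*\rangle = \langle b, y^*\rangle$ for every $(b, b^*) \in \gra A$. Fix any base point $(a_0, a_0^*) \in \gra A$ and consider the translated line $(a_0 + ty,\, a_0^* - ty^*)$, $t \in \RR$. Expanding $\langle a_0 + ty - b,\, a_0^* - ty^* - b^*\rangle$ and using the adjoint identity at both $a_0$ and $b$, the cross terms cancel and one obtains
\begin{equation*}
\langle a_0 + ty - b,\ a_0^* - ty^* - b^*\rangle = \langle a_0 - b,\ a_0^* - b^*\rangle - t^2 \langle y, y^*\rangle.
\end{equation*}
If $\langle y, y^*\rangle < 0$, the right side is nonnegative for every $(b, b^*) \in \gra A$, so the entire line is monotonically related to $\gra A$. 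Maximality forces the line into $\gra A$; linearity of $\gra A$ then yields $(y, -y^*) \in \gra A$ (subtract $(a_0, a_0^*)$ at $t = 1$). Applying the adjoint identity to $(b, b^*) := (y, -y^*)$ gives $\langle y, y^*\rangle = \langle y, -y^*\rangle = -\langle y, y^*\rangle$, so $\langle y, y^*\rangle = 0$, a contradiction. Hence $\langle y, y^*\rangle \geq 0$ and $A^*$ is monotone.

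For $(\Leftarrow)$: A direct expansion of the Fitzpatrick function gives the identity
\begin{equation*}
F_A(x, x^*) - \langle x, x^*\rangle = -\inf_{(a, a^*) \in \gra A}\langle x - a,\ x^* - a^*\rangle,
\end{equation*}
so the condition $F_A \geq \langle\cdot, \cdot\rangle$ on $X \times X^*$ is exactly property (NI). Since $X$ is reflexive, every maximally monotone operator on $X$ is of type (D), and Corollary~\ref{cor:main} identifies (D) with (NI). Thus it suffices, assuming $A^*$ monotone, to prove $F_A(x, x^*) \geq \langle x, x^*\rangle$ for every $(x, x^*) \in X \times X^*$. Fix such $(x, x^*)$ with $F_A(x,x^*) < +\infty$ (otherwise the inequality is trivial), and consider the function $\phi(a, a^*) := \langle x, a^*\rangle + \langle a, x^*\rangle - \langle a, a^*\rangle$ on $\gra A$; monotonicity of $A$ makes $-\langle a, a^*\rangle$ concave on the subspace $\gra A$, so $\phi|_{\gra A}$ is concave. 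At any critical point $(a, a^*) \in \gra A$ of $\phi|_{\gra A}$, a short calculation shows that the first-order condition is precisely $(x - a,\, a^* - x^*) \in \gra A^*$, which combined with the critical identity $\langle x, a^*\rangle + \langle a, x^*\rangle = 2\langle a, a^*\rangle$ gives $F_A(x, x^*) = \phi(a, a^*) = \langle a, a^*\rangle$. Monotonicity of $A^*$ at the point $(x - a,\, a^* - x^*)$ now yields $\langle x - a,\ a^* - x^*\rangle \geq 0$, which rearranges directly to $\langle a, a^*\rangle \geq \langle x, x^*\rangle$, as required.

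The main obstacle is establishing the existence of such a critical point, i.e., attainment of the supremum defining $F_A$. Since $F_A(x,x^*) < +\infty$, $\phi$ is bounded above on the linear subspace $\gra A$; restricting $\phi$ to the skew kernel $K := \{(a, a^*) \in \gra A : \langle a, a^*\rangle = 0\}$ reduces it to a linear functional on a linear space, which forces $\phi \equiv 0$ on $K$, i.e., $\langle x, a^*\rangle + \langle a, x^*\rangle = 0$ for all $(a, a^*) \in K$. Modding out by $K$, $\phi$ becomes strictly concave and coercive on the quotient, where a standard weak-compactness argument using reflexivity of $X$ (hence of $\gra A \subseteq X \times X^*$) supplies a maximizer, which lifts back to a critical point in $\gra A$. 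This compactness step is exactly where the reflexivity hypothesis is indispensable, and it is the only part of the backward direction that fails in nonreflexive settings.
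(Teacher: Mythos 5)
The paper states Theorem~\ref{thm:brbr} as a classical result with citations and gives no proof of its own, so your argument has to stand on its own merits. Your forward implication ($A$ maximally monotone $\Rightarrow A^*$ monotone) is correct and complete: the cross terms along the line $(a_0+ty,\,a_0^*-ty^*)$ do cancel by the adjoint identity, maximality and linearity of the graph then give $(y,-y^*)\in\gra A$, and the adjoint identity applied to that element forces $\langle y,y^*\rangle=0$, contradicting $\langle y,y^*\rangle<0$; since $\gra A^*$ is a subspace, $\langle y,y^*\rangle\geq0$ on $\gra A^*$ is indeed equivalent to monotonicity of $A^*$.

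The backward implication has two genuine gaps. First, the reduction ``it suffices to prove $F_A\geq\langle\cdot,\cdot\rangle$'' is not licensed by Corollary~\ref{cor:main}: that corollary equates (D), (NI) and (FP) \emph{for operators already known to be maximally monotone}, so it cannot be used to infer maximality from the (NI) inequality. The inequality $F_A\geq\langle\cdot,\cdot\rangle$ only says that every point monotonically related to $\gra A$ lies in $\pos F_A$; it does not place such a point in $\gra A$. To close that gap one needs the Burachik--Svaiter/Simons--Z\u{a}linescu theorem (mentioned in Section~\ref{s:openp}) applied to the representative function $\iota_{\gra A}+\langle\cdot,\cdot\rangle$, whose conjugate is precisely $F_A^{\intercal}$ --- a substantial result you neither cite nor prove. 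Second, and independently fatal, the critical point your argument hinges on need not exist: positive definiteness of the quadratic form $\langle a,a^*\rangle$ on the quotient $\gra A/K$ does \emph{not} give coercivity in infinite dimensions, and the supremum defining $F_A(x,x^*)$ can be finite yet unattained. For example, for the maximally monotone operator $A=\operatorname{diag}(1/n)$ on $\ell^2$ one has $F_A(0,x^*)=\sum_n n\,(x^*_n)^2/4$, which is finite but not attained whenever $\sum_n n\,(x^*_n)^2<\infty$ while $\sum_n n^2(x^*_n)^2=\infty$ (e.g.\ $x^*_n=n^{-5/4}$). The standard repair --- and the route the cited proofs take --- is to start from a point $(x,x^*)$ monotonically related to $\gra A$ and minimize the \emph{coercive} convex functional $(a,a^*)\mapsto\langle x-a,x^*-a^*\rangle+\tfrac12\|x-a\|^2+\tfrac12\|x^*-a^*\|^2$ over the closed subspace $\gra A$ of the reflexive space $X\times X^*$; a minimizer exists, the first-order conditions (via the duality map) produce an element of $\gra A^*$ whose nonnegative pairing, combined with $\langle x-a,x^*-a^*\rangle\geq0$, forces $\|x-a\|=\|x^*-a^*\|=0$, and closedness of $\gra A$ then yields $(x,x^*)\in\gra A$ directly, bypassing both gaps at once.
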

We extend this result in the setting of a general real Banach space.
(See also \cite{Si3} and \cite{Si7} for Simons' recent extensions in
the context of
{symmetrically self-dual Banach} (SSDB)
spaces as defined in \cite[\S21]{Si2} and of Banach SNL spaces.)

\begin{theorem}[Brezis-Browder in general Banach space] \label{thm:bbwy}
\emph{(See \cite[Theorem~4.1]{BBWY5}.)}
Let $A\colon X\rightrightarrows X^*$ be a monotone linear
 relation such that $\gra A$ is closed.
Then  $A$ is maximally monotone of type (D) if
and only if  $A^*$ is monotone.
\end{theorem}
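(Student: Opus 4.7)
The plan is to exploit the equivalence of type (D) and type (NI) established in Corollary~\ref{cor:main}, and to manipulate the algebraic adjoint identity for linear relations against the (NI) inequality. The key observation is that for a linear relation $A$, the graph $\gra A$ is a linear subspace containing $(0,0)$, so monotonicity is equivalent to $\langle a, a^*\rangle \geq 0$ for all $(a, a^*) \in \gra A$, and arbitrary scalings $(ta, ta^*)$ are available.

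For the forward implication, suppose $A$ is maximally monotone of type (D), hence of type (NI) by Corollary~\ref{cor:main}. Let $(v, v^*) \in \gra A^*$, so the defining property of the adjoint of a linear relation reads $\langle v, a^*\rangle = \langle a, v^*\rangle$ for every $(a, a^*) \in \gra A$. I would apply the (NI) condition at the shifted test point $(v, -v^*) \in X^{**} \times X^*$. The cross terms cancel by the adjoint identity:
\begin{align*}
\langle a - v, a^* - (-v^*)\rangle = \langle a, a^*\rangle + \langle a, v^*\rangle - \langle v, a^*\rangle - \langle v, v^*\rangle = \langle a, a^*\rangle - \langle v, v^*\rangle \geq -\langle v, v^*\rangle,
\end{align*}
the last inequality by monotonicity of $A$. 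Since $(0,0) \in \gra A$, the infimum over $\gra A$ is attained and equals $-\langle v, v^*\rangle$; (NI) forces this infimum to be $\leq 0$, yielding $\langle v, v^*\rangle \geq 0$. Hence $A^*$ is monotone.

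For the reverse implication, suppose $A^*$ is monotone. First I would establish maximality of $A$. Given $(v, v^*) \in X \times X^*$ monotonically related to $\gra A$, inserting the scalings $(ta, ta^*)$ produces
\begin{align*}
t^2 \langle a, a^*\rangle - t(\langle a, v^*\rangle + \langle v, a^*\rangle) + \langle v, v^*\rangle \geq 0
\end{align*}
for every $(a, a^*) \in \gra A$ and $t \in \mathbb{R}$. Non-negativity of this quadratic forces the skew-adjoint identity $\langle a, v^*\rangle + \langle v, a^*\rangle = 0$ on the isotropic cone $\{(a, a^*) \in \gra A : \langle a, a^*\rangle = 0\}$, and a Cauchy--Schwarz-type bound elsewhere. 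Combined with the monotonicity of $A^*$ and the closed graph hypothesis on $\gra A$, this will force $(v, v^*) \in \gra A$. To finally verify (NI), I would run an analogous scaling analysis at an arbitrary $(v^{**}, v^*) \in X^{**} \times X^*$: if the (NI) infimum were strictly positive, the quadratic constraint would identify an element related to $(v^{**}, -v^*)$ in $\gra A^*$ with strictly negative pairing, contradicting the monotonicity of $A^*$. Corollary~\ref{cor:main} then upgrades (NI) to type (D).

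The main obstacle will be the reverse implication, specifically the step bridging the purely algebraic condition ``$A^*$ monotone'' with the topological type (D) property. The closedness of $\gra A$ is essential for passing from constraints imposed on dense subsets of $\gra A$ to constraints on the full graph, and delicate handling is required for the degenerate isotropic elements where $\langle a, a^*\rangle = 0$: these impose linear rather than merely quadratic constraints on the candidate extension point and interact non-trivially with the adjoint identity.
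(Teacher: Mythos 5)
The survey states this theorem without proof, citing \cite[Theorem~4.1]{BBWY5}, so the comparison below is with that cited proof. Your forward implication is correct and is essentially the standard argument: for $(v,v^*)\in\gra A^*$ the adjoint identity $\langle v,a^*\rangle=\langle a,v^*\rangle$ kills the cross terms in $\langle a-v,\,a^*+v^*\rangle$, the infimum over $\gra A$ is exactly $-\langle v,v^*\rangle$ (attained at the origin), and (NI) --- available via Corollary~\ref{cor:main} --- gives $\langle v,v^*\rangle\geq 0$; since $\gra A^*$ is a subspace this is monotonicity of $A^*$. No complaints there.

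The reverse implication, however, has a genuine gap at its only load-bearing step. The scaling argument you run is sound as far as it goes, but what it produces --- the linear identity $\langle a,v^*\rangle+\langle v,a^*\rangle=0$ on the isotropic subspace $\{(a,a^*)\in\gra A:\langle a,a^*\rangle=0\}$ and a Cauchy--Schwarz bound $(\langle a,v^*\rangle+\langle v,a^*\rangle)^2\leq 4\langle a,a^*\rangle\langle v,v^*\rangle$ elsewhere --- is nothing more than a restatement of ``$(v,v^*)$ is monotonically related to $\gra A$''. The assertion that this, ``combined with the monotonicity of $A^*$ and the closed graph hypothesis, will force $(v,v^*)\in\gra A$'' is precisely the theorem to be proved, and no mechanism is supplied. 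The same problem recurs in your (NI) step: a point with strictly positive infimum satisfies the adjoint identity only on the isotropic subspace and a mere inequality on the rest of the graph, so it does not ``identify an element of $\gra A^*$''; the leap from a quadratic constraint to membership in the adjoint graph is unjustified. The proof in \cite{BBWY5} does not proceed by direct algebra on monotonically related points. Instead it sets $F:=\iota_{\gra A}+\langle\cdot,\cdot\rangle$, uses exactly your quadratic-form estimates \emph{inside a computation of the conjugate} to show that monotonicity of $A^*$ forces $F^*(x^*,x^{**})\geq\langle x^{**},x^*\rangle$ on all of $X^*\times X^{**}$, and then invokes the theorem (Voisei--Z\u{a}linescu, Marques Alves--Svaiter) that an operator represented by such a function is maximally monotone of type (NI), whence type (D) by Theorem~\ref{retyD:3}. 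That representative-function machinery is what converts your algebraic necessary conditions into maximality plus type (D), and it cannot be bypassed by the scaling analysis alone; even in the reflexive case, every known proof of the ``if'' direction routes through a surjectivity or Fenchel-duality argument rather than pointwise algebra. (A small additional correction: closedness of $\gra A$ is not used to pass from dense subsets of the graph to the whole graph, but to rule out the trivial obstruction that a non-closed monotone linear relation admits its closure as a proper monotone extension.)
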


This  also gives an affirmative answer to a question of Phelps and Simons
 \cite[Section~9, item~2]{PheSim}:
\begin{quotation}
\emph{Let $A:\dom A\to X^*$ be linear and maximally monotone.
 Assume that $A^*$ is monotone. Is $A$ necessarily
 of type (D)}?
 \end{quotation}

Recently,  Stephen Simons strengthens Theorem~\ref{thm:bbwy} in \cite{Si7}:

\begin{theorem}[Simons]
\emph{(See \cite[Corollary~6.6]{Si7}.)}
Let $A\colon X\rightrightarrows X^*$ be a monotone linear
 relation such that $\gra A$ is closed.
 Then
the following are equivalent.
\begin{enumerate}
\item  $A$ is maximally monotone of type (D).
\item
$A^*$ is monotone.
\item $A^*$ is maximally monotone with respect to
$X^{**}\times X^*$.
\end{enumerate}
\end{theorem}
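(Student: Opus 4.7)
My plan is to split the proof into two easy steps and one substantive step. First, the equivalence (i)~$\iff$~(ii) is exactly Theorem~\ref{thm:bbwy}, already established in this excerpt. Second, (iii)~$\Rightarrow$~(ii) is immediate, since any maximally monotone operator is by definition monotone.

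The substantive direction is (i)~$\Rightarrow$~(iii). Assuming $A$ is maximally monotone of type (D), Theorem~\ref{thm:bbwy} already gives that $A^*\colon X^{**}\rightrightarrows X^*$ is monotone, so the task is to upgrade this to maximal monotonicity on $X^{**}\times X^*$. I would take an arbitrary $(y_0^{**},y_0^*)\in X^{**}\times X^*$ monotonically related to $\gra A^*$, and work to show $(y_0^{**},y_0^*)\in\gra A^*$, i.e., that $\langle b,y_0^*\rangle=\langle y_0^{**},b^*\rangle$ for every $(b,b^*)\in\gra A$.

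The approach I would take is Fitzpatrick/SSDB-based. The graph $\gra A^*$ is a weak$^*\times\|\cdot\|$-closed linear subspace of $X^{**}\times X^*$, namely the annihilator of $\gra A$ with respect to the natural pairing. Both $\gra A$ (under the canonical embedding $X\hookrightarrow X^{**}$) and $\gra A^*$ should be viewed as maximally positive subspaces of the same symmetrically self-dual space $(X^{**}\times X^*,\,q)$ with $q(x^{**},x^*):=\langle x^{**},x^*\rangle$. I would then transfer the type-(D)/type-(NI) property of $A$ (Corollary~\ref{cor:main}) --- which supplies approximation of $(y_0^{**},y_0^*)$ by bounded nets in $\gra A$ via Definition~\ref{def1}(i) together with the sharp Fitzpatrick bound of Fact~\ref{StFiz} --- to a corresponding bounded-net approximation property inside $\gra A^*$. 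Once an approximating net in $\gra A^*$ converging to $(y_0^{**},y_0^*)$ in the weak$^*\times\|\cdot\|$ topology is in hand, closedness of $\gra A^*$ forces $(y_0^{**},y_0^*)\in\gra A^*$, closing the argument.

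The hard part will be precisely this transfer. Theorem~\ref{NDMSV:3} and Fact~\ref{StFiz} are tailored to linear relations from $X$ to $X^*$, whereas $A^*$ is a linear relation from $X^{**}$ to $X^*$ whose own adjoint naturally lives on $X^{***}$; the formal duality does not match, and one cannot apply these density theorems to $A^*$ in a black-box manner. Moreover, a naive attempt to identify $\gra A^*$ with the maximally monotone extension $B^{-1}$ furnished by Fact~\ref{retyD:1} already fails in finite-dimensional non-symmetric examples, so a more refined tool is needed. The tool bridging this gap is Simons' SSDB/SNL framework in \cite{Si7}, which interprets the symmetric self-dual structure of $X^{**}\times X^*$ directly rather than through iterated duals and derives maximality of both $\gra A$ and $\gra A^*$ within this common space by a single unified argument from the monotonicity of $A^*$.
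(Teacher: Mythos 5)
Your reduction is the right one, and two of the three implications are indeed already settled: (i)$\iff$(ii) is exactly Theorem~\ref{thm:bbwy}, and (iii)$\Rightarrow$(ii) is trivial. Note that the paper itself offers no proof of this theorem --- it is quoted from Simons \cite[Corollary~6.6]{Si7} --- so the only thing to assess is whether your argument for the one genuinely new implication, (ii)$\Rightarrow$(iii) (equivalently (i)$\Rightarrow$(iii)), actually closes.

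It does not. What you need is that every $(y_0^{**},y_0^*)$ monotonically related to $\gra A^*$ already lies in $\gra A^*$, and your plan is to manufacture a bounded net in $\gra A^*$ converging to $(y_0^{**},y_0^*)$ and then invoke closedness of $\gra A^*$. But every approximation tool available in the excerpt (Theorem~\ref{NDMSV:3}, Fact~\ref{StFiz}, Proposition~\ref{retyD:2}) is keyed to the Fitzpatrick function $F_A$ and controls points monotonically related to $\gra A$, i.e.\ points of the operator $B$ of Fact~\ref{retyD:1}; as you yourself observe, $\gra A^*$ is not $(\gra B)^{-1}$ in general (a skew matrix on $\RR^2$ already separates them), and a point monotonically related to $\gra A^*$ need bear no useful relation to $F_A$ or to $\gra B$. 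So the ``transfer'' you describe is not a technical refinement of the displayed results --- it is the entire content of the new implication, and your proposal ultimately discharges it by pointing at Simons' SSDB/SNL machinery rather than by an argument. Concretely, what is missing is Simons' theorem that a $q$-positive \emph{polar subspace} (here $\gra A^*$, the polar of $\gra A$ in the space $X^{**}\times X^*$ with $q(x^{**},x^*)=\langle x^{**},x^*\rangle$) is automatically \emph{maximally} $q$-positive; nothing in the present paper substitutes for that. As a reduction plus correct attribution your write-up is fine, but as a proof it has a genuine gap at exactly the step that makes the theorem stronger than Theorem~\ref{thm:bbwy}.
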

 \medskip

We give a corresponding but  negative answer to a question posed in \cite[Chapter 3.5, page~56]{YaoPhD} (see Example~\ref{Exphts:1} below).
 \begin{quote}
\emph{Let $A\colon X \To X^*$ be a monotone linear relation
such that $\gra A$ is closed. Assume $A^*|_X$ is monotone.
Is $A$ necessarily maximally monotone?}
\end{quote}

 If $Z$ is a real  Banach space with dual $Z^*$ and a set $S\subseteq
Z$, we define $S^\bot$ by $S^\bot := \{z^*\in Z^*\mid\langle
z^*,s\rangle= 0,\quad \forall s\in S\}$. Given a subset $D$ of
$Z^*$, we define $D_{\bot}$ \cite{PheSim} by $D_\bot := \{z\in
Z\mid\langle z,d^*\rangle= 0,\quad \forall d^*\in D\}$.

 \begin{example}\label{Exphts:1}
 Let $X$ be nonreflexive, and $e\in X^{**}\backslash{X}$.
 Let $A:X\rightrightarrows X^*$ by $\gra A:=\{0\}\times e_{\bot}$.
 Then $A$ is a monotone linear relation with closed graph, and $\gra A^*=\spand\{e\}\times X^*$.
 Moreover, $A^*|_X$ is monotone but $A$ is not maximally monotone.\qede
 \end{example}

 \begin{proof}
 Clearly, $A$ is a monotone linear relation and $\gra A$ is closed.
 Since $e\notin X$, $e\neq 0$. Thus, $ e_{\bot}\neq X^*$ and hence
 $A$ is not maximally monotone.

 Let $(z^{**}, z^*)\in X^{**}\times X^*$.  Then we have
 \begin{align*}
 (z^{**}, z^*)\in\gra A^{**}
 &\Leftrightarrow \langle z^{**}, a^*\rangle+\langle z^*, -0\rangle=0,\quad
 \forall a^*\in e_\bot\Leftrightarrow \langle z^{**}, a^*\rangle=0,\quad
 \forall a^*\in e_\bot\\
 &\Leftrightarrow z^{**}\in (e_\bot)^{\bot}\\
 &\Leftrightarrow z^{**}\in \spand\{e\}\quad\text{(by  \cite[Proposition~2.6.6(c)]{Megg})}.
 \end{align*}
 Hence $\gra A^*=\spand\{e\}\times X^*$ and then
 $\gra (A^*|_X)=\{0\}\times X^*$.
 Thus, $A^*|_X$ is monotone.
  \end{proof}

\begin{remark}
Example~\ref{Exphts:1} gives a negative answer to a question posed in \cite[Chapter 3.5, page~56]{YaoPhD}. Note that by \cite[Proposition~5.4(iv)]{BBWY5} or \cite[Proposition~3.2.10(iii), page~25]{YaoPhD}, the converse of \cite[Chapter 3.5, page~56]{YaoPhD} is true, that is,
\begin{quote}
\emph{Let $A\colon X \To X^*$ be a maximally monotone linear relation.
Then $A^*|_X$ is monotone.}
\end{quote}
\end{remark}

\begin{fact}{\rm (See \cite[Propositions~3.5, 3.6 and 3.7 and Lemma~3.18]{BWY7}.)}
\label{FE:1}
Suppose that  $X=\ell^2$,  and that
$A:\ell^2\rightrightarrows \ell^2$ is given by
\begin{align}Ax:=\frac{\bigg(\sum_{i< n}x_{i}-\sum_{i> n}x_{i}\bigg)_{n\in\NN}}{2}
=\bigg(\sum_{i< n}x_{i}+\tfrac{1}{2}x_n\bigg)_{n\in\NN},
\quad \forall x=(x_n)_{n\in\NN}\in\dom A,\label{EL:1}\end{align}
where $\dom A:=\Big\{ x:=(x_n)_{n\in\NN}\in \ell^{2}\mid \sum_{i\geq 1}x_{i}=0,
 \bigg(\sum_{i\leq n}x_{i}\bigg)_{n\in\NN}\in\ell^2\Big\}$ and $\sum_{i<1}x_{i}:=0$.
Then
\begin{align}
\label{PF:a2}
A^*x= \bigg(\thalb x_n + \sum_{i> n}x_{i}\bigg)_{n\in\NN},
\end{align}
where
\begin{equation*}
x=(x_n)_{n\in\NN}\in\dom A^*=\bigg\{ x=(x_n)_{n\in\NN}\in \ell^{2}\;\; \bigg|\;\;
 \bigg(\sum_{i> n}x_{i}\bigg)_{n\in\NN}\in \ell^{2}\bigg\}.
\end{equation*}
Then $A$ provides an at most single-valued linear relation
such that the following hold.
\begin{enumerate}
 \item
 $A$\label{NEC:1} is maximally monotone and skew.
\item\label{NEC:2} $A^*$ is maximally monotone but not skew.
\item\label{NEC:4} $F_{A^*}^{*}(x^*,x)=F_{A^*}(x,x^*)
=\iota_{\gra A^*}(x,x^*)+\scal{x}{x^*},\quad \forall(x,x^*)\in X\times X$.
\item\label{NEC:5} $\langle A^*x, x\rangle=\tfrac{1}{2}s^2,
\quad \forall x=(x_n)_{n\in\NN}\in\dom A^*\ \text{with}\quad
s:=\sum_{i\geq1} x_i$.
\end{enumerate}
\end{fact}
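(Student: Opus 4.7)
The key idea is that every assertion flows from one reformulation (Abel summation) together with one quadratic identity. I would first introduce partial sums $S_n := \sum_{i\le n} x_i$ (with $S_0 := 0$), rewriting $(Ax)_n = \tfrac12 (S_n + S_{n-1})$ and $\dom A = \{x \in \ell^2 : (S_n)_n \in \ell^2,\ S_n \to 0\}$, and noting that $\dom A$ is norm-dense in $\ell^2$ (it already contains every finitely supported zero-sum sequence). To derive the adjoint formula \eqref{PF:a2}, I expand $\langle Au, x\rangle$ for $u \in \dom A$, $x \in \ell^2$ and swap the order of summation:
\begin{align*}
\langle Au, x\rangle
 = \sum_n x_n \Bigl(\tfrac12 u_n + \sum_{i<n} u_i\Bigr)
 = \sum_n u_n \Bigl(\tfrac12 x_n + \sum_{i>n} x_i\Bigr).
\end{align*}
Density of $\dom A$ identifies the bracketed expression as $(A^*x)_n$ and pins down $\dom A^*$ exactly as stated.

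The central computation is
\begin{align*}
\langle A^*x, x\rangle \;=\; \tfrac12 \sum_n x_n^2 + \sum_{n<i} x_n x_i \;=\; \tfrac12 \Bigl(\sum_i x_i\Bigr)^2 \;=\; \tfrac12 s^2,
\end{align*}
which is \ref{NEC:5}. Specializing to $x \in \dom A$ (so $s=0$) gives $\langle Ax, x\rangle = 0$, hence $A$ is skew; the value $\langle A^* e_1, e_1\rangle = \tfrac12 \neq 0$ shows $A^*$ is monotone but not skew. Graph closedness of $A$ follows from the recovery relation $S_n = y_n + \tfrac12 x_n$ (where $y = Ax$): any norm-convergent sequence $(x^{(k)}, Ax^{(k)}) \to (x,y)$ forces $(S_n)_n = y + \tfrac12 x \in \ell^2$ with $S_n \to 0$, so $x \in \dom A$ and $Ax = y$; closedness of $\gra A^*$ is automatic. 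Combined with the monotonicity of $A^*$ and of $(A^*)^* = A$, two applications of the reflexive Brezis--Browder theorem (Theorem~\ref{thm:brbr}) deliver the maximal monotonicity of both $A$ and $A^*$, completing \ref{NEC:1} and \ref{NEC:2}.

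For \ref{NEC:4}, the inequality $F_{A^*} \ge \iota_{\gra A^*} + \langle\cdot,\cdot\rangle$ is immediate from Fact~\ref{f:Fitz}. For the converse I would restrict the supremum defining $F_{A^*}(x,x^*)$ to the slice $a \in \dom A$ (those $a \in \dom A^*$ with $s_a = 0$). A direct computation from Step~1 shows $A^* a = -A a$ on $\dom A$ and $\langle a, A^* a\rangle = \tfrac12 s_a^2 = 0$, so
\begin{align*}
F_{A^*}(x, x^*) \;\ge\; \sup_{t \in \RR,\, a \in \dom A} t\bigl(\langle a, x^*\rangle - \langle A a, x\rangle\bigr),
\end{align*}
which equals $+\infty$ unless $\langle A a, x\rangle = \langle a, x^*\rangle$ for every $a \in \dom A$; by definition of the adjoint, that latter condition is precisely $(x, x^*) \in \gra A^*$. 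The self-conjugacy $F_{A^*}^*(x^*, x) = F_{A^*}(x, x^*)$ then falls out of a one-line conjugation of the formula $F_{A^*} = \iota_{\gra A^*} + \langle\cdot,\cdot\rangle$ using linearity of $A^*$.

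The main obstacle is \ref{NEC:4}: for a general maximally monotone operator $T$ the Fitzpatrick function is \emph{strictly} smaller than $\iota_{\gra T} + \langle\cdot,\cdot\rangle$, so the identity is tight and relies on two special features of $A^*$ --- linearity and the exact cancellation of the quadratic term $s_a^2$ on the slice $\dom A$ --- together with the density argument of Step~1, which is what guarantees that the zero-sum slice is rich enough to separate every $(x, x^*) \notin \gra A^*$.
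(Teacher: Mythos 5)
Your proposal is correct in all essentials. Note that the paper itself gives no proof of this Fact --- it is imported from \cite[Propositions~3.5, 3.6, 3.7 and Lemma~3.18]{BWY7} with only a citation --- so there is no internal argument to compare against; what you have written is a legitimate self-contained derivation. Your organizing devices all check out: the partial-sum reformulation $(Ax)_n=\tfrac12(S_n+S_{n-1})$; the single quadratic identity $\langle A^*x,x\rangle=\tfrac12 s^2$ driving \ref{NEC:1}, \ref{NEC:2} and \ref{NEC:5} (via $A^*a=-Aa$ for $a\in\dom A$, so that $\langle Aa,a\rangle=-\tfrac12 s_a^2=0$); the closedness of $\gra A$ through the recovery relation $S_n=(Ax)_n+\tfrac12 x_n$; and two applications of the reflexive Brezis--Browder theorem (Theorem~\ref{thm:brbr}), the second using $(A^*)^*=A$, to obtain maximality of both $A$ and $A^*$. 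Your treatment of \ref{NEC:4} is also the right mechanism: restricting the Fitzpatrick supremum to the slice $\big\{(ta,-tAa)\mid t\in\RR,\ a\in\dom A\big\}$ forces $F_{A^*}=+\infty$ off $\gra A^*$ precisely because vanishing of the linear functional $a\mapsto\langle a,x^*\rangle-\langle Aa,x\rangle$ on $\dom A$ is the definition of $(x,x^*)\in\gra A^*$, while the on-graph value is supplied by Fact~\ref{f:Fitz}, and the self-conjugacy is then a tautology from the resulting formula.

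Two places deserve more care than your sketch gives them. First, the interchange of summation in $\langle Au,x\rangle$ is not a blanket Fubini: the double sum $\sum_{i<n}|u_i|\,|x_n|$ need not converge for general $u,x\in\ell^2$. It should be carried out by summation by parts on the truncations, where the boundary term $\big(\sum_{i>N}x_i\big)\big(\sum_{i<N}u_i\big)$ tends to zero because both factors do (using $(\sum_{i>n}x_i)_n\in\ell^2$ and $u\in\dom A$). Second, density of $\dom A$ gives uniqueness of the adjoint value but not, by itself, the reverse inclusion $\dom A^*\subseteq\big\{x\mid(\sum_{i>n}x_i)_n\in\ell^2\big\}$; for that, test $x\in\dom A^*$ against the specific elements $e_i-e_j\in\dom A$ and let $j\to\infty$ to recover $\sum_{n>i}x_n=(A^*x)_i-\tfrac12 x_i$, which exhibits the tail sequence as an element of $\ell^2$. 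With those two repairs the argument is complete.
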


Let $F:X\times X^*\rightarrow\RX$, and define $\pos F$ \cite{Si2}
 by  \begin{align*}\pos F:=\big\{(x,x^*)\in X\times X^*\mid F(x,x^*)=\langle x,x^*\rangle\big\}.
 \end{align*}

The following  result due to Simons generalizes
 the result of Br\'{e}zis, Crandall and Pazy \cite{BreCA}.

\begin{fact}[Simons]
\emph{(See \cite[Theorem~34.3]{Si2}.)}\label{BCPCon}
Suppose that $X$ is reflexive.
Let $F_1,F_2:X\times X^*\rightarrow\RX$ be
 proper lower semicontinuous and convex functions
  with $P_X\dom F_1\cap P_X\dom F_2\neq\varnothing$.
Assume that $F_1, F_2$ are BC--functions and  that there exists
an increasing function $j:\left[0,+\infty\right[\rightarrow
\left[0,+\infty\right[$ such that the implication
\begin{align*}&(x,x^*)\in\pos F_1, (y,y^*)\in\pos F_2, x\neq y\ \text{and}\
\langle x-y, y^*\rangle=\|x-y\|\cdot\|y^*\|\\
&\quad\Rightarrow\|y^*\|\leq
j\big(\|x\|+\|x^*+y^*\|+\|y\|+\|x-y\|\cdot\|y^*\|\big)
\end{align*}
holds. Then
$M:=\big\{(x, x^*+y^*)\mid (x,x^*)\in\pos F_1, (x,y^*)\in\pos F_2\big\}$
is a maximally monotone set.
\end{fact}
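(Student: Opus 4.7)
The plan is to realize $M$ as the $\pos$ set of a single BC--function and then invoke the reflexive--space representability machinery. Concretely, I would take $G := F_1 \Box_2 F_2$, the partial inf--convolution introduced earlier in the paper, so that $G(x, x^*) = \inf_{v^* \in X^*}\big(F_1(x, x^* - v^*) + F_2(x, v^*)\big)$.

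Monotonicity of $M$ is almost automatic: $\pos F_i$ is monotone for any BC--function $F_i$ (a consequence of Fenchel--Young combined with $F_i^*(x^*, x) \geq \langle x, x^*\rangle$), and summing the two monotonicity inequalities at a common first component gives monotonicity of $M$. Next I would check that $G$ is itself a BC--function: properness follows from the hypothesis $P_X \dom F_1 \cap P_X \dom F_2 \neq \varnothing$; convexity is immediate; $G \geq \langle \cdot, \cdot\rangle$ follows by summing the individual BC inequalities pointwise in $v^*$ and taking the infimum; and the dual inequality $G^* \geq \langle \cdot, \cdot\rangle$ comes, using reflexivity to identify $X^{**}$ with $X$, from the conjugacy identity $G^* = F_1^* \Box_2 F_2^*$ combined with the BC property of $F_1^*$ and $F_2^*$. (If necessary, replace $G$ by its lower semicontinuous hull, which does not change $\pos G$.)

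The technically delicate step is showing $\pos G = M$. The inclusion $M \subseteq \pos G$ is immediate. For the reverse, fix $(x, x^*) \in \pos G$ and pick a minimizing sequence $v_n^* \in X^*$ with $F_1(x, x^* - v_n^*) + F_2(x, v_n^*) \to \langle x, x^*\rangle$; the two nonnegative excesses $F_i - \langle \cdot, \cdot\rangle$ sum to $o(1)$ and hence each tends to zero. This is precisely where the growth hypothesis on $j$ enters: applied, after a small $\varepsilon$--relaxation, to the near--optimal pairs $(x, x^* - v_n^*)$ and $(x, v_n^*)$, it bounds $\|v_n^*\|$, ruling out blow--up along the extremal alignment $\langle x - y, y^*\rangle = \|x - y\|\cdot\|y^*\|$ that would otherwise permit it. Reflexivity then provides a weakly convergent subsequence $v_n^* \weakly v^*$, and weak lower semicontinuity of $F_1, F_2$ combined with $F_i \geq \langle \cdot, \cdot\rangle$ forces $(x, x^* - v^*) \in \pos F_1$ and $(x, v^*) \in \pos F_2$, placing $(x, x^*) \in M$.

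Finally, maximal monotonicity of $\pos G$ for any BC--function $G$ on a reflexive space is the standard representability result: for any $(z, z^*)$ monotonically related to $\pos G$, the auxiliary functional
\[ \Phi(x, x^*) := G(x, x^*) + \tfrac12\|x - z\|^2 + \tfrac12\|x^* - z^*\|^2 - \langle x, z^*\rangle - \langle z, x^*\rangle + \langle z, z^*\rangle \]
is nonnegative (by the BC inequality and Cauchy--Schwarz), convex, coercive and weakly lower semicontinuous on the reflexive space $X \times X^*$, hence attains its infimum at some $(x_0, x_0^*)$; the Euler conditions for $\Phi$ together with the monotonicity relation for $(z, z^*)$ force $\|x_0 - z\| = \|x_0^* - z^*\| = 0$ and $G(z, z^*) = \langle z, z^*\rangle$, giving $(z, z^*) \in \pos G = M$. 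The main obstacle throughout is the boundedness step in $\pos G \subseteq M$: without the condition on $j$ one would in general only obtain $M$ as a proper subset of $\pos G$, and the somewhat elaborate form of Simons' hypothesis is calibrated precisely to exclude the extremal--alignment pathology that would otherwise let the minimizing sequence escape to infinity.
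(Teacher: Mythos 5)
The paper states this result as a quoted Fact (Simons, \cite[Theorem~34.3]{Si2}) and gives no proof of its own, so your proposal can only be judged on its internal merits --- and it has two genuine gaps, both located exactly where the theorem is hard. First, the assertion that $G:=F_1\Box_2 F_2$ is a BC--function is not justified: the dual inequality $G^*\geq\langle\cdot,\cdot\rangle$ is derived from a conjugacy identity of the form $(F_1\Box_2F_2)^*=F_1^*\Box_1F_2^*$, but without a constraint qualification one only has the inequality $(F_1\Box_2F_2)^*\leq F_1^*\Box_1F_2^*$, which is the wrong direction; the identity (and the exactness and lower semicontinuity of $G$, which your final variational argument also needs) requires an Attouch--Brezis type condition such as $\bigcup_{\lambda>0}\lambda\big(P_X\dom F_1-P_X\dom F_2\big)$ being a closed subspace. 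The whole point of the Brezis--Crandall--Pazy hypothesis is to dispense with that condition, so this step cannot be repaired by citing Proposition~\ref{F12}. The parenthetical fix of passing to the lower semicontinuous hull is also not innocent: the hull can strictly enlarge $\pos G$, and then maximality of $\pos\overline{G}$ says nothing about $M$.

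Second, and more decisively, the boundedness step for the minimizing sequence $(v_n^*)$ misuses the hypothesis on $j$. That hypothesis constrains only pairs $(x,x^*)\in\pos F_1$ and $(y,y^*)\in\pos F_2$ with $x\neq y$ and with the \emph{exact} alignment $\langle x-y,y^*\rangle=\|x-y\|\cdot\|y^*\|$; your near-optimal pairs $(x,x^*-v_n^*)$ and $(x,v_n^*)$ share the same first coordinate, so the implication is vacuously satisfied and yields no bound whatsoever, and there is no ``$\varepsilon$--relaxation'' of the hypothesis available to approximate members of the $\pos$ sets. Without that bound the infimum defining $G(x,x^*)$ may fail to be attained and $\pos G$ may strictly contain $M$, which is precisely the failure mode you identify but do not exclude. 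Simons' actual argument (following Brezis--Crandall--Pazy) is an approximation scheme: one regularizes $F_2$ by a coercive quadratic (a Yosida-type approximation at the level of representative functions), for which the constraint qualification holds automatically and the regularized problem is solvable in the reflexive space; the $j$--condition is then applied to the genuinely \emph{distinct} points $x\neq y$ produced by the approximation --- where the alignment $\langle x-y,y^*\rangle=\|x-y\|\cdot\|y^*\|$ arises from the duality map --- to obtain the uniform bound needed to pass to the limit. Your outline would need to be rebuilt around such an approximation to close these gaps.
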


\begin{example}\label{FCPEX:3}{(See \cite[Example~5.2]{BBWY4}.)}
Suppose that $X$ and $A$ are as in Fact~\ref{FE:1}.
Set $e_1:=(1,0,\ldots,0,\ldots)$, i.e., there is a $1$
 in the first place and all  others entries are $0$, and $C:=\left[0,e_1\right]$.
 Let $j:\left[0,+\infty\right[
 \rightarrow\left[0,+\infty\right[$ be an increasing function such that
 $j(\gamma)\geq\tfrac{\gamma}{2}$ for every $\gamma\in\left[0,+\infty\right[$.
Then the following hold.
\begin{enumerate}
\item\label{BCPONA:E01} $F_{A^*}$ and $F_{N_C}=\iota_C\oplus\sigma_C$ are BC--functions.
\item\label{BCPONA:E1}
$(F_{A^*}\Box_2 F_{N_C})(x,x^*)=
\begin{cases}
\langle x,A^*x\rangle + \sigma_C(x^*-A^*x),&\text{if $x\in C$;}\\
\pinf, &\text{otherwise,}
\end{cases}
\quad \forall (x,x^*)\in X\times X^*$.
\item\label{BCREX:E2} Then
\begin{align*}
F^*_{A^*}(x^*,0)+F_{N_{C}}^*(A^*e_1-x^*,0)>
(F_{A^*}\Box_2 F_{N_{C}})^*(A^*e_1,0),\quad\forall x^*\in X.
\end{align*}
\item\label{BCREX:E3}
The implication
\begin{align*}&(x,x^*)\in\pos F_{N_{C}} , (y,y^*)\in\pos F_{A^*}, x\neq y\ \text{and}\
\langle x-y, y^*\rangle=\|x-y\|\cdot\|y^*\|\nonumber\\
&\quad\Rightarrow\|y^*\|\leq\tfrac{1}{2}\|y\|\leq
j\big(\|x\|+\|x^*+y^*\|+\|y\|+\|x-y\|\cdot\|y^*\|\big)
\end{align*}
 holds.
\item\label{BCREX:E4} $A^*+N_C$ is maximally monotone. \qede
\end{enumerate}
\end{example}

 Example~\ref{FCPEX:3} shows  the following conjecture fails in
general (see \cite{BBWY4} for more on Open Problem~\ref{prob2}.

\begin{openprob}\label{prob2} \emph{Suppose that $X$ is reflexive. Let $F_1,F_2:X\times X^*\rightarrow\RX$ be
 proper lower semicontinuous and convex functions
  with $P_X\dom F_1\cap P_X\dom F_2\neq\varnothing$.
Assume that $F_1, F_2$ are BC--functions and  that there exists
an increasing function $j:\left[0,+\infty\right[\rightarrow
\left[0,+\infty\right[$ such that the implication
\begin{align*}&(x,x^*)\in\pos F_1, (y,y^*)\in\pos F_2, x\neq y\ \text{and}\
\langle x-y, y^*\rangle=\|x-y\|\cdot\|y^*\|\\
&\quad\Rightarrow\|y^*\|\leq
j\big(\|x\|+\|x^*+y^*\|+\|y\|+\|x-y\|\cdot\|y^*\|\big)
\end{align*}
holds.  Then, is it true that, for all $(z,z^*)\in X\times X^*$, there exists
$v^*\in X^*$ such that
\begin{align}
\label{Probcon}F_1^*(v^*,z)+F_2^*(z^*-v^*,z)\leq (F_1\Box_2 F_2)^*(z^*,z)?
\end{align}}
\end{openprob}

Finally, we provide some results on the partial inf-convolution of
two Fitzpatrick functions associated with maximally monotone
operators, which has important consequences  for the ``sum problem"
(see the discussion in Section~\ref{s:openp}).

\begin{proposition}\label{F12}\emph{(See \cite[Proposition~7.1.11, page~164]{YaoPhD}.)}
Let $A,B\colon X\To X^*$  be maximally monotone and
suppose that $\bigcup_{\lambda>0} \lambda\left[\dom A-\dom B\right]$ is a closed subspace of $X$.
Then
$ F_A\Box_2F_B$ is proper, norm$\times$weak$^*$ lower semicontinuous and convex,
and the partial infimal convolution is exact everywhere.
\end{proposition}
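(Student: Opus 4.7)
\noindent\emph{Proof proposal.} The plan is to reduce the partial inf-convolution to the (infimal) image of a sum of two convex functions under a continuous linear map, and then apply an Attouch--Br\'{e}zis-type constraint-qualification theorem to deliver the three structural properties together with exactness.

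Convexity and properness are routine. Joint convexity of $(x,x^*,v)\mapsto F_A(x,x^*-v)+F_B(x,v)$ on $X\times X^*\times X^*$ (sum of convex functions composed with an affine map) gives convexity of $F_A\Box_2 F_B$ after marginalising over $v$. The hypothesis forces $0\in\dom A-\dom B$, so $\dom A\cap\dom B\neq\varnothing$; picking $x_0\in\dom A\cap\dom B$, $x_0^*\in Ax_0$, $v_0^*\in Bx_0$, Fact~\ref{f:Fitz} yields $(F_A\Box_2 F_B)(x_0,x_0^*+v_0^*)\leq F_A(x_0,x_0^*)+F_B(x_0,v_0^*)=\langle x_0,x_0^*+v_0^*\rangle<\infty$, while the Fitzpatrick inequality gives the uniform lower bound $F_A\Box_2 F_B\geq\langle\cdot,\cdot\rangle>-\infty$.

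For the norm$\times$weak$^*$ lower semicontinuity together with exactness, I would introduce the auxiliary functions $\widetilde F_A,\widetilde F_B:X\times X^*\times X^*\to\RX$ defined by $\widetilde F_A(x,u,v):=F_A(x,u)$ and $\widetilde F_B(x,u,v):=F_B(x,v)$. Both are proper, convex, and norm$\times$weak$^*$$\times$weak$^*$ lower semicontinuous (inherited from $F_A$, $F_B$). With $T:X\times X^*\times X^*\to X\times X^*$ the continuous linear map $T(x,u,v):=(x,u+v)$, a direct unfolding of definitions gives that $F_A\Box_2 F_B$ equals the image of $\widetilde F_A+\widetilde F_B$ under $T$. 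The Attouch--Br\'{e}zis constraint qualification for the pair $(\widetilde F_A,\widetilde F_B)$ asks $\bigcup_{\lambda>0}\lambda\bigl(\dom\widetilde F_A-\dom\widetilde F_B\bigr)$ to be a closed subspace of $X\times X^*\times X^*$; a computation shows this set equals $\bigl(\bigcup_{\lambda>0}\lambda(P_X\dom F_A-P_X\dom F_B)\bigr)\times X^*\times X^*$, so only the $X$-part is substantive. Once this CQ is verified, the Attouch--Br\'{e}zis theorem together with the standard stability result for images of sums of convex functions under continuous linear maps (cf.~\cite{Zalinescu} or the relevant sections of \cite{Si2,BorVan}) produces proper norm$\times$weak$^*$ lower semicontinuity of $T(\widetilde F_A+\widetilde F_B)=F_A\Box_2 F_B$ and exactness of the defining infimum.

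The main obstacle is to match the hypothesised CQ on $\dom A-\dom B$ with the needed one on $P_X\dom F_A-P_X\dom F_B$. The inclusion $\cone(\dom A-\dom B)\subseteq\cone(P_X\dom F_A-P_X\dom F_B)$ is immediate since $\gra A\subseteq\dom F_A$ gives $\dom A\subseteq P_X\dom F_A$, and similarly for $B$. For the reverse direction I would invoke Theorem~\ref{ProdLm:3} of this paper to obtain $P_X\dom F_A\subseteq\overline{\conv\dom A}$ and $P_X\dom F_B\subseteq\overline{\conv\dom B}$. Writing $V:=\cone(\dom A-\dom B)$, which is a closed subspace by hypothesis, I would then observe that any $a-b$ with $a=\sum_i\lambda_i a_i\in\conv\dom A$ and $b=\sum_j\mu_j b_j\in\conv\dom B$ equals $\sum_{i,j}\lambda_i\mu_j(a_i-b_j)$ (since $\sum_i\lambda_i=\sum_j\mu_j=1$), a convex combination of elements of $V$, hence lies in $V$; closedness of $V$ then handles the passage to closures, giving $\overline{\conv\dom A}-\overline{\conv\dom B}\subseteq V$ and thus $\cone(P_X\dom F_A-P_X\dom F_B)=V$. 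A secondary delicate point is to ensure that the weak$^*$ half of the lower semicontinuity claim survives the reduction, which is why the dummy variables $u,v$ are kept in $X^*$ (rather than passing through $X^{**}$) throughout the construction.
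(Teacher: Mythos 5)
Your handling of convexity, properness, and---most usefully---the transfer of the constraint qualification from $\dom A-\dom B$ to $P_X\dom F_A-P_X\dom F_B$ via Theorem~\ref{ProdLm:3} and the convex-combination identity is correct, and that transfer is indeed the right bridge between the hypothesis on operator domains and the hypothesis one needs on Fitzpatrick-function domains. (The paper itself only cites \cite[Proposition~7.1.11]{YaoPhD} for this result, so you are being compared against the standard argument rather than a displayed proof.) The bookkeeping identity $F_A\Box_2F_B=T(\widetilde F_A+\widetilde F_B)$ with $T(x,u,v)=(x,u+v)$ is also fine.

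The gap is in the final step, and it is not the ``secondary delicate point'' you label it as: the Attouch--Br\'ezis theorem plus a generic image-stability result does not deliver either the norm$\times$weak$^*$ lower semicontinuity or the exactness. Attouch--Br\'ezis applied to $\widetilde F_A,\widetilde F_B$ on the Banach space $X\times X^*\times X^*$ gives an exact decomposition of the \emph{conjugate} $(\widetilde F_A+\widetilde F_B)^*$ on $X^*\times X^{**}\times X^{**}$; pushing this through $(Tg)^*=g^*\circ T^*$ and biconjugating identifies at best the \emph{norm}-lsc hull of $F_A\Box_2F_B$, because conjugation over the Banach dual pair recovers weakly lsc hulls, not weak$^*$-lsc hulls in the $X^*$ variable. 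Obtaining weak$^*$ lower semicontinuity forces you into the dual pair $\langle X\times X^*,\,X^*\times X\rangle$, where the Baire-category engine behind Attouch--Br\'ezis is not available off the shelf (that is precisely why the literature uses the Simons--Z\u{a}linescu variant tailored to $X\times X^*$). Likewise, exactness of $\inf_{v^*}\big[F_A(x,x^*-v^*)+F_B(x,v^*)\big]$ is an attainment statement over the non-reflexive space $X^*$: the proof must show that a minimizing net $(v^*_\alpha)$ is \emph{bounded} (this is where the closed-subspace hypothesis actually does its work, through a uniform-boundedness argument), extract a weak$^*$-convergent subnet by Banach--Alaoglu, and pass to the limit using the fact that $F_A$ and $F_B$, as suprema of norm$\times$weak$^*$-continuous affine functions, are norm$\times$weak$^*$ lower semicontinuous. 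Neither the boundedness of the minimizing net nor the compactness extraction appears in your argument, and these are the actual content of the proposition; as written, the proof is incomplete.
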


Theorem~\ref{FS6} below was  proved  in \cite[Theorem~5.10]{BWY3} for  a reflexive space.
It can be extended  to a general Banach space.
\begin{theorem}[Fitzpatrick function of the sum]\label{FS6}\emph{(See \cite[Theorem~5.2]{BBYE}.)}
Let $A,B\colon X\To X^*$ be maximally monotone linear relations,
and suppose that $\dom A-\dom B$ is closed.
Then $$F_{A+B}= F_A\Box_2F_B,$$
and the partial infimal convolution is exact everywhere.
\end{theorem}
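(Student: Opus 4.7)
The plan is to establish the equality by a two-sided inequality argument, using Proposition~\ref{F12} for both the proper/lsc structure and exactness, and the Fitzpatrick characterization from Fact~\ref{f:Fitz} and Fact~\ref{f:Fitzp2} for the substantive content. The overall strategy is to show that $F_A \Box_2 F_B$ is a representative function for the (maximally monotone) linear relation $A+B$, whence by minimality of $F_{A+B}$ (Fact~\ref{f:Fitzp2}) it must dominate $F_{A+B}$; the reverse inequality is a direct calculation from the definitions.

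First I would handle the easy direction $F_A \Box_2 F_B \ge F_{A+B}$. Fix $(x,x^*) \in X \times X^*$ and $v \in X^*$. For any $(c,c^*) \in \gra(A+B)$ with $c^* = a^* + b^*$, $a^* \in Ac$, $b^* \in Bc$, the definition of the Fitzpatrick function yields
\begin{align*}
F_A(x,x^*-v) &\ge \langle x,a^*\rangle + \langle c, x^*-v\rangle - \langle c,a^*\rangle, \\
F_B(x,v) &\ge \langle x,b^*\rangle + \langle c,v\rangle - \langle c,b^*\rangle.
\end{align*}
Adding, the $v$-terms cancel, and taking the supremum over $(c,c^*) \in \gra(A+B)$ and infimum over $v$ gives $F_A \Box_2 F_B(x,x^*) \ge F_{A+B}(x,x^*)$.

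For the reverse inequality I would verify that $F_A \Box_2 F_B$ is a representative function for $A+B$. From $F_A \ge \langle\cdot,\cdot\rangle$ and $F_B \ge \langle\cdot,\cdot\rangle$ (Fact~\ref{f:Fitz}), the same telescoping as above gives $F_A(x,x^*-v) + F_B(x,v) \ge \langle x, x^*\rangle$ for every $v$, hence $F_A \Box_2 F_B \ge \langle\cdot,\cdot\rangle$ on $X \times X^*$. Next, for $(x,x^*) \in \gra(A+B)$ with any decomposition $x^* = a^* + b^*$, $a^* \in Ax$, $b^* \in Bx$, taking $v = b^*$ and using $F_A(x,a^*) = \langle x,a^*\rangle$ and $F_B(x,b^*) = \langle x,b^*\rangle$ (again Fact~\ref{f:Fitz}), we obtain $F_A \Box_2 F_B(x,x^*) \le \langle x,x^*\rangle$; combined with the previous inequality this gives $F_A \Box_2 F_B = \langle\cdot,\cdot\rangle$ on $\gra(A+B)$. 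Proposition~\ref{F12} ensures that $F_A \Box_2 F_B$ is proper, convex, and norm$\times$weak$^*$ lsc, hence legitimate for applying Fact~\ref{f:Fitzp2}. Granted that $A+B$ is maximally monotone, Fact~\ref{f:Fitzp2} yields $F_{A+B} \le F_A \Box_2 F_B$, closing the loop. Exactness of the infimal convolution is delivered directly by Proposition~\ref{F12}.

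The main obstacle is verifying that $A+B$ is in fact maximally monotone in the general (not necessarily reflexive) Banach space setting. For linear relations this has to be proved separately; the natural tool is the characterization in Theorem~\ref{thm:bbwy} (Brezis--Browder in general Banach space), combined with the closedness hypothesis that $\dom A - \dom B$ is a closed subspace, which allows one to verify that $(A+B)^*$ is monotone by expressing it via $A^* + B^*$ on the appropriate subspace and using the adjoint calculus for linear relations. Once maximal monotonicity of $A+B$ is in hand, the representation argument above concludes the proof. A secondary technical point to check is that the decomposition $x^* = a^* + b^*$ used in the exactness step is consistent with the structure of $A+B$ as a linear relation (so that the infimum in the partial inf-convolution is genuinely achieved at $v = b^*$), which again uses Proposition~\ref{F12}.
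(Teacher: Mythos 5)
Your two-inequality outline is sound as far as it goes: the estimate $F_A\Box_2 F_B\geq F_{A+B}$ is exactly the elementary inequality already recorded in Section~\ref{s:openp} (via \cite[Lemma~23.9]{Si2} or \cite[Proposition~4.2]{BM}), and your verification that $F_A\Box_2 F_B\geq\langle\cdot,\cdot\rangle$ on $X\times X^*$ with equality on $\gra(A+B)$ is correct. The fatal problem is the step you yourself flag as the ``main obstacle''. Theorem~\ref{thm:bbwy} characterizes maximal monotonicity \emph{of type (D)}, not plain maximal monotonicity: for a closed monotone linear relation $T$, monotonicity of $T^*$ is \emph{equivalent} to $T$ being maximally monotone of type (D). So if your adjoint calculus succeeded in showing that $(A+B)^*$ is monotone, you would have proved that $A+B$ is of type (D) --- which is false under the stated hypotheses. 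Take $B=0$ (so that $\dom A-\dom B=X$ is closed and $A+B=A$) and let $A$ be any maximally monotone linear operator that is not of type (D), for instance Gossez's operator $G$ on $\ell^1$ or $T_{\alpha}$ on $c_0$ from Example~\ref{FPEX:1}; then $(A+B)^*=A^*$ is not monotone. For the same reason, trying to control $(A+B)^*$ by $A^*+B^*$ is hopeless here: $A^*$ and $B^*$ need not be monotone in the first place, since $A$ and $B$ are not assumed to be of type (D).

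Beyond this, your argument runs in the opposite logical direction from the source. In \cite[Theorem~5.2]{BBYE} the identity $F_{A+B}=F_A\Box_2 F_B$ is established \emph{without} assuming that $A+B$ is maximally monotone; the closedness of the subspace $\dom A-\dom B$ is exploited on the conjugate side (the same Attouch--Br\'ezis-type mechanism that underlies Proposition~\ref{F12}), and the maximal monotonicity of $A+B$ is then \emph{deduced} from the identity in \cite[Theorem~5.5]{BBYE}. This is precisely the ``new approach'' to the linear sum theorem advertised in the paragraph following Theorem~\ref{FS6}, so taking maximality of $A+B$ as an input defeats the purpose of the result. You could of course repair your proof by importing the linear sum theorem of Voisei or Simons (\cite{Voisei06}, \cite[Theorem~46.3]{Si2}) as a black box, after which Fact~\ref{f:Fitzp2} does give $F_{A+B}\leq F_A\Box_2 F_B$ and Proposition~\ref{F12} supplies properness, lower semicontinuity and exactness (its hypothesis is met because $\dom A-\dom B$ is a closed subspace, hence equals $\bigcup_{\lambda>0}\lambda\left[\dom A-\dom B\right]$); but as written, the mechanism you propose for the one genuinely hard step cannot work.
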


Theorem~\ref{FS6} provides a new approach to showing the maximal
monotonicity of two maximally monotone linear relations (see
\cite[Theorem~5.5]{BBYE}),
 which was  first used by  Voisei in \cite{Voisei06} while  Simons gave another proof in \cite[Theorem~46.3]{Si2}.

\begin{theorem}\label{tf:main}\emph{(See \cite[Theorem~5.5]{BBYE}.)}
Let $A:X\To X^*$ be a maximally monotone linear relation.
Suppose $C$ is a nonempty closed convex subset of $X$,
and  that $\dom A \cap \inte C\neq \varnothing$.
Then $F_{A+N_C}= F_A\Box_2F_{N_C}$,
and the partial infimal convolution is exact everywhere.
\end{theorem}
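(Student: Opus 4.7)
The plan is to prove the two inequalities separately, following the pattern of Theorem~\ref{FS6} but using the explicit Fitzpatrick function $F_{N_C}=\iota_C\oplus\sigma_C$ of a normal cone in place of the linear-relation machinery for the second operator.

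First, I would verify the constraint qualification of Proposition~\ref{F12} with $B:=N_C$. Taking $x_0\in\dom A\cap\inte C$ and $\delta>0$ with $x_0+\delta B_X\subseteq C$ yields $\delta B_X\subseteq\{x_0\}-C\subseteq\dom A-\dom N_C$, so $\bigcup_{\lambda>0}\lambda[\dom A-\dom N_C]=X$, which is trivially a closed subspace. Proposition~\ref{F12} then gives that $F_A\Box_2 F_{N_C}$ is proper, norm$\times$weak$^*$ lower semicontinuous, convex, and exact at every point.

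For the inequality $F_{A+N_C}\leq F_A\Box_2 F_{N_C}$, I would argue pointwise using $F_{N_C}(x,v^*)=\iota_C(x)+\sigma_C(v^*)$. For $x\notin C$ the right-hand side is $+\infty$; for $x\in C$ and any $(a,a^*+c^*)\in\gra(A+N_C)$ with $a^*\in Aa$ and $c^*\in N_C(a)$, I would split $x^*=(x^*-v^*)+v^*$ and combine the Fitzpatrick bound $\langle x,a^*\rangle+\langle a,x^*-v^*\rangle-\langle a,a^*\rangle\leq F_A(x,x^*-v^*)$ with $\langle a,v^*\rangle\leq\sigma_C(v^*)$ (since $a\in C$) and $\langle x-a,c^*\rangle\leq 0$ (since $c^*\in N_C(a)$ and $x\in C$). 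This yields a pointwise bound; taking the supremum over $(a,a^*+c^*)$ and then the infimum over $v^*$ completes this direction.

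The hard direction is $F_A\Box_2 F_{N_C}\leq F_{A+N_C}$. My plan is to pass to Fenchel conjugates: since $F_A\Box_2 F_{N_C}$ is proper, lower semicontinuous, and convex by Proposition~\ref{F12}, it coincides with its biconjugate, so it suffices to dualize the partial infimal convolution and compare with the corresponding dual formula for $F_{A+N_C}$. Dualization exchanges $\Box_2$ with a summation of partial-transposed conjugates, and one then exploits the structural description of $F_A^{*\intercal}$ for the maximally monotone linear relation $A$ (in terms of the adjoint $A^*$, in the spirit of Fact~\ref{FE:1}) together with the transparent form of $F_{N_C}^{*\intercal}$ to identify the resulting expression with a representative of $A+N_C$ that matches $F_{A+N_C}$ on the graph. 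The delicate point is that $X$ need not be reflexive, so dual expressions naturally live on bidual spaces; the exactness of the infimal convolution furnished by Proposition~\ref{F12} is precisely what allows the optimal splitting to be realized in $X^*$, bringing the argument back to $X\times X^*$. I expect this conjugate computation, together with the final identification with $F_{A+N_C}$, to be the main obstacle.
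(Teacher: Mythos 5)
The survey itself contains no proof of this theorem (it is quoted from \cite[Theorem~5.5]{BBYE}), so your proposal can only be judged on its own terms. Its first two parts are sound: the verification that $\bigcup_{\lambda>0}\lambda\left[\dom A-\dom N_C\right]=X$ and the appeal to Proposition~\ref{F12} for properness, lower semicontinuity and exactness are correct, and your pointwise argument for $F_{A+N_C}\leq F_A\Box_2F_{N_C}$ is a correct (if unnecessary) re-derivation of the general inequality $F_{A+B}\leq F_A\Box_2F_B$, which the paper already cites from \cite[Lemma~23.9]{Si2} and which holds for arbitrary monotone operators, using neither linearity nor the constraint qualification.

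The genuine gap is the reverse inequality $F_A\Box_2F_{N_C}\leq F_{A+N_C}$, which is the entire content of the theorem; your paragraph on it is a statement of intent rather than an argument, and it points in a direction that cannot close the proof. First, your stated endgame --- identifying $F_A\Box_2F_{N_C}$ as ``a representative of $A+N_C$ that matches $F_{A+N_C}$ on the graph'' --- is insufficient in principle: the only general mechanism converting such information into an inequality is Fact~\ref{f:Fitzp2}, and it delivers $F_{A+N_C}\leq F$, i.e.\ the easy direction you already have. The paper's own Example~\ref{Example:main} (with $\partial\|\cdot\|$ and $\Id$ on $\RR$) exhibits a partial inf-convolution of Fitzpatrick functions that is a perfectly good representative function of the sum, agreeing with $\langle\cdot,\cdot\rangle$ on the graph, yet strictly larger than the Fitzpatrick function of the sum; so some genuinely extra structural input is unavoidable. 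Second, the input you propose --- ``the structural description of $F_A^{*\intercal}$ \ldots{} in the spirit of Fact~\ref{FE:1}'' --- does not exist in this paper: Fact~\ref{FE:1} is an explicit computation for one particular operator on $\ell^2$, not a general formula for the conjugate of the Fitzpatrick function of a maximally monotone linear relation. Where linearity must actually enter is through the convexity of $\langle\cdot,\cdot\rangle+\iota_{\gra A}$ (valid precisely because $A$ is a monotone linear relation), which allows $F_A$ and $F_{A+N_C}$ to be written as conjugates of convex functions so that an Attouch--Br\'ezis type duality under your constraint qualification can be run; and the argument must also confront the maximal monotonicity of $A+N_C$ itself, a nontrivial theorem that your sketch neither proves nor invokes. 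As written, the hard half of the theorem is missing.
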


\section{Open problems in  Monotone Operator Theory}\label{s:openp}

As discussed in \cite{Bor1,Bor2,Bor3,BorVan}, the two most central
open questions in monotone operator theory  in a general real Banach
space are almost certainly the following:

Let $A, B\colon  X\To X^*$ be maximally monotone.
\begin{enumerate}\item[(i)] The ``sum problem":
Assume that   $A,B$ satisfy
\emph{Rockafellar's constraint qualification}, i.e., $\dom A\cap\inte\dom B\neq\varnothing$
\cite{Rock70}.  \emph{Is the sum operator $A+B$ necessarily
maximally monotone}, which is so called the ``sum problem"?
 \item[(ii)]  \emph{Is $\overline{\dom A}$ necessarily convex?}
Rockafellar showed that it is true  for every operator with nonempty interior domain \cite{Rock69} and as we saw in Section \ref{ssec:conv} it is now known to
hold for most classes of maximally monotone operators (see also\cite[Section~44]{Si2}).
\end{enumerate}

A positive answer to various restricted versions of (i) implies a
positive answer to (ii) \cite{BorVan,Si2}. Some recent developments on the sum problem can be found in  Simons' monograph
\cite{Si2} and \cite{Bor1,Bor2,Bor3,BorVan,BY3, ZalVoi, MarSva5, VV2,Yao3,Yao2}.
In \cite{BY3}, we showed if the following conjecture is true then
 the  sum problem would have an affirmative answer.
\begin{quote}\emph{\textbf{Conjecture}
Let $A:X\To X^*$ be a maximally monotone linear relation,
and let $B: X\rightrightarrows X^*$ be maximally monotone
such that $\bigcup_{\lambda>0} \lambda\left[\dom A-\dom B\right]=X$.
Then $A+B$ is maximally monotone.}
\end{quote}
In \cite{BY3}, we showed the above conjecture is true when $A$ and $B$ satisfy  Rockafellar's constraint qualification:
$\dom A\cap\inte\dom B\neq\varnothing$.
At the end of this section, we will list some interesting open problems on the special cases of the sum problem.
  Simons showed that the closure of the domain of every (FPV) operator is convex \cite[Theorem~44.2]{Si2}.
  However, we do not know if every maximally monotone is of type (FPV).
   Recent progress regarding (ii) can be
found in \cite{BY2}.

In the following, we show that one possible approach to the sum
problem cannot be feasible. By  \cite[Lemma~23.9]{Si2} or
\cite[Proposition~4.2]{BM}, $F_A\Box_2 F_{B}\geq F_{A+B}$. It
naturally raises a question: Does  equality  always hold
 under the Rockafellar's constraint qualification?
If this were true, then it would directly solve the sum problem in
the affirmative (see \cite{Voi1, Si2} and \cite[Chapter~7]{YaoPhD}).
However,  in general, it cannot hold. The easiest example probably
is \cite[Example~4.7]{BM}  by Bauschke, McLaren and Sendov using two projection operators
  on one dimensional space.

   Here we give another counterexample of a
a maximally monotone linear relation  and the subdifferential of
  a proper lower semicontinuous sublinear function, which thus also implies that
 we cannot establish the maximality of the sum of a linear relation $A$ and the subdifferential of
   a proper lower semicontinuous sublinear function $f$ by showing that
$F_A\Box_2 F_{\partial f}= F_{A+\partial f}$ always holds.

\begin{example}\label{Example:main} {(See \cite[Example~7.1.14, page~167]{YaoPhD}.)}
Let $X$ be a Hilbert space, $B_X$ be the closed unit ball of $X$ and $\Id$ be the
identity mapping\index{identity mapping} from $X$ to $X$.
Let $f:x\in X\rightarrow \|x\|$. Then we have
\begin{align}
F_{\partial f}\Box_2 F_{\Id}(x,x^*)=\|x\|+\begin{cases}0,\;&\text{if}\,\|x+x^*\|\leq1;\\
\tfrac{1}{4}\|x+x^*\|^2-\tfrac{1}{2}\|x+x^*\|+\tfrac{1}{4},\;&\text{if}\,\|x+x^*\|>1.\label{esee:7}
\end{cases}\end{align}
We also have $F_{\partial f+\Id}\neq F_{\partial f}\Box_2 F_{\Id}$ when $X=\RR$. \qede
\end{example}

Now we show that another possible approach to the sum
problem cannot be feasible either.

\begin{quote}
Let $F:X\times X^*\rightarrow\RX$ be proper lower semicontinuous and convex. Assume that
\begin{align*}
F(x, x^*)\geq\langle x,x^*\rangle,\quad F^*(x^*,x)\geq\langle x,x^*\rangle,\quad
\forall (x,x^*)\in X\times X^*.
\end{align*}
Is $\pos F$ is a maximally monotone set?
\end{quote}

If the above conjecture were true, then the sum problem would have an affirmative answer by setting $F:=F_A\Box_2 F_B$. Burachik and Svaiter showed the conjecture holds when $X$ is reflexive
(see \cite[Theorem~3.1]{BurSVi} or \cite[Theorem~1.4(b)]{SiZ}). We give the following example to show that it cannot be true in a general Banach space.

\begin{example}\label{Exphts:2}
 Let $X$ be nonreflexive, and $e\in X^{**}\backslash{X}$.
 Let $F: X\times X^*\rightarrow\RX$ be defined by $F:=\iota_{\{0\}\times e_{\bot}}$.
 Then $F^*=\iota_{X^*\times\spand\{e\}}$ on $X^*\times X^{**}$, and \begin{align*}F(x, x^*)\geq\langle x,x^*\rangle,\quad F^*(x^*,x)\geq\langle x,x^*\rangle,\quad
\forall (x,x^*)\in X\times X^*.\end{align*}
However, $\pos F=\{0\}\times e_{\bot}$  is not a maximally monotone set.\qede
 \end{example}
\begin{proof}We have $F$ is proper lower semicontinuous and convex.
Similar to the proof of  Example~\ref{Exphts:1}, we have  $F^*=\sigma_{\{0\}\times e_{\bot}}=\iota_{X^*\times\spand\{e\}}$.
Then we have
$F\geq\langle \cdot,\cdot\rangle,\quad F^{*\intercal}\geq\langle \cdot,\cdot\rangle$
on $X\times X^*$.
Clearly, $\pos F=\{0\}\times e_{\bot}$. By Example~\ref{Exphts:1}, it is  not a maximally monotone set.
\end{proof}

\begin{remark}[Conjecture]
Finally, we conjecture that every nonreflexive space admits an
operator that is not of type (BR) and so also not of type
(D).\end{remark}

We also list some interesting open problems on special cases of the sum problem:

\begin{problem}
Let $A:X\rightarrow X^*$ be a continuous monotone linear operator,
and let $B:X\rightrightarrows X^*$ be maximally monotone.
  Is
$A+B$ necessarily maximally monotone$?$
\end{problem}

\begin{problem}
 Let $f:X\rightarrow \RX$ be a proper lower semicontinuous convex function,
and let $B:X\rightrightarrows X^*$ be maximally monotone with $\dom \partial f\cap\inte\dom B\neq\varnothing$.
  Is
$\partial f +B$ necessarily maximally monotone$?$
\end{problem}

\begin{problem}
 Let  $A:X\rightrightarrows X^*$ be maximally monotone with convex domain.
  Is
$A$ necessarily of type (FPV)?
\end{problem}

Let us recall a problem posed by S. Simons in \cite[Problem~41.2]{Si}

\begin{problem}
Let $A:X\rightrightarrows X^*$ be of type (FPV),
let $C$ be a nonempty closed convex subset of $X$,
and suppose that $\dom A \cap \inte C\neq \varnothing$.
Is $A+N_C$ necessarily maximally monotone$?$

\end{problem}

A more general problem:

\begin{problem}
Let $A, B:X\To X^*$ be  maximally monotone
with $\dom A\cap\inte\dom B\neq\varnothing$. Assume
that $A$ is of type (FPV).
Is $A+B$ necessarily maximally monotone$?$

\end{problem}

\paragraph{Acknowledgments} The authors thank Dr.~Heinz Bauschke and Dr.~Andrew Eberhard for their
careful reading and pertinent comments on various parts of this
paper. The authors also thank the editors and the anonymous  referee for their
 pertinent and constructive comments.
Jonathan  Borwein and Liangjin Yao were partially supported
by various Australian Research Council grants.

% \small

\end{document}